\documentclass[11pt]{article}

\usepackage[latin1]{inputenc}
\usepackage{t1enc}
\usepackage{amsmath, amssymb, amsfonts, amsthm, amsopn,epsfig}
\usepackage[none]{hyphenat}
\usepackage{float}
\usepackage{graphicx}
\usepackage{caption}
\usepackage[margin=1in]{geometry}
\usepackage[toc,page]{appendix}
\usepackage{epstopdf}
\usepackage{etoolbox}
\usepackage{csquotes}
\usepackage{xspace}
\usepackage{tikz}
\usepackage{tikz-3dplot}
\usepackage{pgf}
\usetikzlibrary{shapes,calc,positioning}
\usepackage{hyperref}
\hypersetup{
	colorlinks=true,
	linkcolor=blue,
	filecolor=magenta,
	urlcolor=cyan,
	citecolor=blue
}
\usepackage{cleveref}
\usepackage{thm-restate}
\usepackage{comment}
\usepackage{bbding}

\theoremstyle{plain}
\newtheorem{theorem}{Theorem}[section]
\newtheorem{definition}[theorem]{Definition}
\newtheorem{corollary}[theorem]{Corollary}
\newtheorem{claim}[theorem]{Claim}
\newtheorem{lemma}[theorem]{Lemma}
\newtheorem{conjecture}[theorem]{Conjecture}

\newtheorem{prop}[theorem]{Proposition}
\Crefname{theorem}{Theorem}{Theorems}
\Crefname{definition}{Definition}{Definitions}
\Crefname{corollary}{Corollary}{Corollaries}
\Crefname{claim}{Claim}{Claims}
\Crefname{lemma}{Lemma}{Lemmas}
\Crefname{conjecture}{Conjecture}{Conjectures}
\Crefname{problem}{Problem}{Problems}
\crefname{prop}{Proposition}{Propositions}

\newcommand{\eps}{\varepsilon}
\newcommand{\zm}{zero-one\xspace} 
\newcommand{\subs}{\subseteq}

\DeclareMathOperator{\polylog}{polylog}
\DeclareMathOperator{\ex}{ex}
\DeclareMathOperator{\forb}{forb}

\title{Large homogeneous submatrices}
\author{
D\'aniel Kor\'andi \thanks{University of Oxford. Email: \texttt{daniel.korandi@maths.ox.ac.uk}. Research supported by SNSF Postdoc.Mobility fellowship P400P2-186686.}
\and
J\'anos Pach \thanks{R\'enyi Institute, Budapest. Email: \texttt{pach@cims.nyu.edu}. Research partially supported by the National Research, Development and Innovation Office, NKFIH, project KKP-133864, and the Austrian Science Fund (FWF), grant Z 342-N31.} \thanks{MIPT Moscow. Research partially supported by  the Ministry of Educational and Science of the Russian Federation in the framework of MegaGrant No.\ 075-15-2019-1926.}
\and
Istv\'an Tomon \thanks{ETH Zurich. Email: \texttt{istvan.tomon@math.ethz.ch}. Research supported by SNSF grant 200021-175573.} \footnotemark[3]
}
\date{}

\begin{document}

\sloppy

\maketitle

\begin{abstract}
A matrix is homogeneous if all of its entries are equal. Let $P$ be a $2\times 2$ \zm matrix that is not homogeneous. We prove that if an $n\times n$ \zm matrix $A$ does not contain $P$ as a submatrix, then $A$ has an $cn\times cn$ homogeneous submatrix for a suitable constant $c>0$. We further provide an almost complete characterization of the matrices $P$ (missing only finitely many cases) such that forbidding $P$ in $A$ guarantees an $n^{1-o(1)}\times n^{1-o(1)}$ homogeneous submatrix. We apply our results to chordal bipartite graphs, totally balanced matrices, halfplane-arrangements and string graphs.
\end{abstract}

\section{Introduction}

Zero-one matrices play an important role in discrete mathematics, as they can be used to represent (bipartite) graphs, hypergraphs, systems of incidences, and many other binary relations. In such settings, the circumstances often force structural restrictions. In this paper, we analyze the structure of matrices that do not contain a given submatrix $P$, and show that forbidding $P$ often forces a large all-0 or all-1 submatrix. 
With a slight abuse of notation, the letter $c$ appearing in different statements stands for unrelated positive constants.

A matrix is \emph{homogeneous} if all of its entries are equal, and \emph{inhomogeneous} otherwise. We will also say that a matrix $A$ {\em contains} another matrix $P$ if $P$ is a submatrix of $A$, and that $A$ is {\em $P$-free} if $P$ is not a submatrix of $A$. Our first result shows that if $P$ is a $2\times 2$ matrix whose entries are not all 0 or all 1, then every $P$-free \zm matrix contains a linear-size homogeneous submatrix.

\begin{theorem} \label{thm:2by2}
    Let $P$ be an inhomogeneous $2\times 2$ \zm matrix. Then every $P$-free $n\times n$ \zm matrix $A$ contains a homogeneous $cn \times cn$ submatrix, for a suitable constant $c>0$. 
\end{theorem}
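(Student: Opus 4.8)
The plan is to reduce to three cases. Since a pattern and its row/column permutations define the same containment relation, and since passing from $A$ to $A^{T}$ or to its complement $\overline{A}$ sends $P$-freeness to $P^{T}$- or $\overline{P}$-freeness while preserving the existence of a $cn\times cn$ homogeneous submatrix, it is enough to prove the theorem when $P$ is one of the three representatives
\[
  P_{\mathrm{id}}=\begin{pmatrix}1&0\\0&1\end{pmatrix},\qquad
  P_{\mathrm{ln}}=\begin{pmatrix}1&1\\0&0\end{pmatrix},\qquad
  P_{\mathrm{cr}}=\begin{pmatrix}1&0\\0&0\end{pmatrix}
\]
(the $14$ inhomogeneous $2\times 2$ \zm matrices fall into exactly these three orbits under row and column permutations, transposition and complementation). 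Throughout, $N(r)\subs[n]$ is the set of columns in which row $r$ of $A$ has a $1$. For each representative I would first rephrase $P$-freeness as a condition on the sets $N(r)$, then extract the homogeneous submatrix by a median or an averaging argument.

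The cases $P_{\mathrm{id}}$ and $P_{\mathrm{ln}}$ are easy because $P$-freeness pins the supports into an essentially nested family. Indeed, $A$ is $P_{\mathrm{id}}$-free iff the sets $N(r)$ are pairwise comparable, i.e.\ form a chain after reordering the rows; taking a median row with support of size $k$, either the smaller half of the rows is all-$0$ on the $n-k$ columns off this support or the larger half is all-$1$ on the $k$ columns on it, and one of these blocks is a homogeneous $\lfloor n/2\rfloor\times\lfloor n/2\rfloor$ submatrix. Next, $A$ is $P_{\mathrm{ln}}$-free iff $|N(u)\setminus N(v)|\le 1$ for every ordered pair of rows $u,v$; fixing a row $v_0$ of minimum support $D=N(v_0)$ with $d=|D|$, every row then has at most one $1$ among the $n-d$ columns outside $D$ and at most one $0$ among the $d$ columns inside $D$. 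If $d\le n/2$, the columns outside $D$ carry at most $n$ ones altogether, so the $\lceil (n-d)/2\rceil\ge n/4$ of them with the fewest ones carry at most $n/2$ ones in total and hence are all-$0$ on at least $n/2$ rows; if $d>n/2$ the symmetric averaging inside $D$ yields an all-$1$ block. Either way we obtain a homogeneous $\tfrac{n}{4}\times\tfrac{n}{4}$ submatrix.

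The case $P_{\mathrm{cr}}$ is the delicate one, since now $P$-freeness no longer keeps the supports near a chain. Here $A$ is $P_{\mathrm{cr}}$-free iff any two rows $u,v$ are equal or satisfy $N(u)\cup N(v)=[n]$; equivalently, the distinct zero-sets $\overline{N(r)}=[n]\setminus N(r)$ are pairwise disjoint. Let $Z_1,\dots,Z_m$ be these zero-sets, $Z_i$ shared by $a_i$ rows, so $\sum_i a_i=n$ and $\sum_i|Z_i|\le n$. If some $a_i\ge n/6$ we are done, as those $\ge n/6$ identical rows are homogeneous on $Z_i$ when $|Z_i|\ge n/6$ and on $[n]\setminus Z_i$ otherwise. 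If every $a_i<n/6$, sort the indices so that $|Z_1|/a_1\le\cdots\le|Z_m|/a_m$ and let $t$ be minimal with $a_1+\cdots+a_t\ge n/6$; then the set $R$ of rows whose zero-set lies in $\{Z_1,\dots,Z_t\}$ satisfies $n/6\le|R|<n/3$, and since $Z_1,\dots,Z_t$ are the ``cheapest'' types one has $\sum_{i\le t}|Z_i|\le|R|\cdot\bigl(\sum_i|Z_i|\big/\sum_i a_i\bigr)\le|R|<n/3$. Hence the $>2n/3$ columns of $[n]\setminus\bigcup_{i\le t}Z_i$ are all-$1$ on every row of $R$, a homogeneous $\tfrac{n}{6}\times\tfrac{n}{6}$ submatrix.

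The main obstacle is exactly this last step: the all-$1$ submatrix must be assembled from several distinct row types, and bounding the total size $\sum_{i\le t}|Z_i|$ of the discarded zero-sets seems to need the weighted sorting above (or an equivalent device); everything else is bookkeeping. Finally, small $n$ are absorbed by shrinking $c$, since a $1\times 1$ homogeneous submatrix always exists, so $c=1/6$ works uniformly.
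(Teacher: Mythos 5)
Your reduction to the three representatives $P_{\mathrm{id}}$, $P_{\mathrm{ln}}$, $P_{\mathrm{cr}}$ is fine as far as it goes: for a $2\times 2$ pattern, reversing all rows or all columns of $A$ realizes the unique nontrivial row or column permutation of $P$, and transposition and complementation work as you say, so the $14$ inhomogeneous patterns do collapse to these three orbits. The problem is with every one of the three case analyses that follow: in each you characterize ``$A$ is $P$-free'' by an \emph{unordered} condition on the supports $N(r)$, but the theorem is about ordered submatrix containment, and the two are not equivalent. For $P_{\mathrm{id}}$, the matrix $\begin{pmatrix}0&1\\1&0\end{pmatrix}$ is $P_{\mathrm{id}}$-free even though $N(1)=\{2\}$ and $N(2)=\{1\}$ are incomparable, so the supports of an ordered $P_{\mathrm{id}}$-free matrix need not form a chain and the median-row step collapses. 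For $P_{\mathrm{ln}}$, $P_{\mathrm{ln}}$-freeness gives $|N(u)\setminus N(v)|\le 1$ only when $u<v$, not in both orders; taking a minimum-support row $v_0$ therefore does not bound the number of $1$'s outside $D=N(v_0)$ in the rows below $v_0$ (the $P_{\mathrm{ln}}$-free matrix with $N(1)=\emptyset$ and $N(u)=[n]$ for all $u>1$ has $n$ ones outside $D$ in every other row). For $P_{\mathrm{cr}}$, the matrix $\begin{pmatrix}0&1\\0&0\end{pmatrix}$ is $P_{\mathrm{cr}}$-free even though its two rows are distinct and $N(1)\cup N(2)\ne[n]$, so ordered $P_{\mathrm{cr}}$-freeness does not force the zero-sets to be pairwise disjoint, and the weighted-allocation step has nothing to stand on. In short, all three ``iff''s describe \emph{unordered} $P$-freeness, and the rest of the argument leans on structure that ordered $P$-freeness simply does not provide.

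This is precisely the difficulty the paper flags when it says that much of the work comes from the ordered structure of matrices. Your argument, cleaned up, is essentially a proof of the \emph{unordered} statement, which appears in the paper as \Cref{thm:graph} via \Cref{lemma:unordered}. To get the ordered theorem, the paper takes much more involved routes: \Cref{thm:1} (via \Cref{lem:density,lemma:longmtx}, counting $P_2$-copies across row pairs inside a block decomposition) handles the orbits of $P_{\mathrm{id}}$ and $P_{\mathrm{ln}}$, where the pattern has no homogeneous column, and \Cref{thm:2} (via the row-monotone path machinery of \Cref{claim:copy,claim:comp}) handles $P_{\mathrm{cr}}$. You would need arguments of comparable strength, specifically exploiting the column order and the fact that a forbidden $2\times 2$ submatrix must sit at $i_1<i_2$, $j_1<j_2$, to close the gap in each of your three cases.
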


As we will see below, this result does not hold when $P$ is the all-0 or the all-1 $2\times 2$ matrix.  We can, however, extend \Cref{thm:2by2} to $2\times k$ matrices by making a small sacrifice on the size of the homogeneous submatrix.

\begin{theorem} \label{thm:2byk}
    Let $P$ be a $2\times k$ \zm matrix that does not contain a $2\times 2$ homogeneous submatrix. Then every $P$-free $n\times n$ \zm matrix $A$ contains a homogeneous $n^{1-o(1)}\times cn$ submatrix, for a suitable constant $c>0$.
\end{theorem}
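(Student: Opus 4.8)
The plan is to reduce the problem to a single extremal forbidden pattern and then to analyse the rows of $A$ as subsets of the column set $[n]$; the easy sub‑cases are reminiscent of the proof of \Cref{thm:2by2}, but the general case needs extra structural work. Since $P$ has no homogeneous $2\times2$ submatrix, it has at most one all‑$0$ and at most one all‑$1$ column, and all its remaining columns are of the two ``mixed'' types $\binom{0}{1},\binom{1}{0}$; hence $P$ is a submatrix of the matrix $P_k$ whose $2k+2$ columns consist of one $\binom{0}{0}$, one $\binom{1}{1}$, $k$ copies of $\binom{0}{1}$ and $k$ copies of $\binom{1}{0}$, so every $P$‑free matrix is $P_k$‑free and it suffices to prove the theorem for $P=P_k$. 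Writing $S_r\subseteq[n]$ for the set of $1$‑columns of row $r$, a direct check shows that $A$ is $P_k$‑free iff for all rows $u,v$ one has $S_u\cup S_v=[n]$, or $S_u\cap S_v=\emptyset$, or $\min(|S_u\setminus S_v|,|S_v\setminus S_u|)<k$. Replacing $A$ by its complement if more than half of its rows have $|S_r|>n/2$ (this preserves $P_k$‑freeness, as $P_k$ is self‑complementary up to reordering columns), we may assume that at least $n/2$ rows satisfy $|S_r|\le n/2$; restrict to these. For two such rows $S_u\cup S_v=[n]$ forces $S_u\cap S_v=\emptyset$, so for every surviving pair $u,v$ either $S_u\cap S_v=\emptyset$ or $\min(|S_u\setminus S_v|,|S_v\setminus S_u|)<k$.

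Next I dispose of sparse rows. Fix $T:=2^{\sqrt{\log n}}=n^{o(1)}$. If at least $n/4$ surviving rows have $|S_r|\le T$, take $\lfloor n/(2T)\rfloor$ of them: the union of their $1$‑sets has size at most $n/2$, so these rows together with the remaining $\ge n/2$ columns form an all‑$0$ submatrix of size $n^{1-o(1)}\times(n/2)$, and we are done. Otherwise at least $n/4$ rows have $|S_r|\in(T,n/2]$; splitting $(T,n/2]$ into $O(\log n)$ dyadic intervals and keeping the most populous class gives a family $\mathcal R$ of $N=\Omega(n/\log n)$ rows whose $1$‑sets all have size in $[s,2s)$ for a fixed $s\in(T,n/2]$. (This dyadic step is the only source of the $n^{o(1)}$ loss, and the reason the conclusion has $n^{1-o(1)}$ rather than $cn$ rows.)

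The main tool is the following consequence of the $P_k$‑condition for the intersection graph $G$ on $\mathcal R$ (where $u\sim v$ iff $S_u\cap S_v\ne\emptyset$). If $u\sim v$ and $|S_u|\le|S_v|$ then $|S_u\setminus S_v|<k$: otherwise $|S_v\setminus S_u|<k$, whence $|S_u\setminus S_v|=|S_u|-|S_v|+|S_v\setminus S_u|<k$ as well. Therefore, if $W$ is a clique of $G$ and $S^{*}$ is the largest $1$‑set among its members, every $w\in W$ satisfies $S_w\subseteq S^{*}\cup(S_w\setminus S^{*})$ with $|S_w\setminus S^{*}|<k$, so $\bigcup_{w\in W}S_w$ has size at most $|S^{*}|+|W|k\le n/2+o(n)$ (using $|W|\le N=o(n)$); hence $W$ together with the $\ge n/3$ columns outside this union forms an all‑$0$ submatrix with $|W|$ rows. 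The same bound holds for the ``lower neighbourhood'' $L(u):=\{v\sim u:\ |S_v|\le|S_u|\}$ of any single row $u$, whose members all satisfy $S_v\subseteq S_u\cup(\text{a set of size}<k)$. Hence, if $G$ contains a clique — or some $L(u)$ — of size $\ge N/(\log n)^2=n^{1-o(1)}$, we are done. In the complementary case one uses that the ``upper neighbourhood'' $\{v\sim u:\ |S_v|\ge|S_u|\}$ is always a clique of $G$ (a Helly‑type fact: if two $1$‑sets each contain all but fewer than $k$ elements of a third $1$‑set of size $>2k$, then those two sets intersect) to deduce that $G$ has few edges and hence a large independent set $I$; and whenever $|I|$ pairwise disjoint sets have size $\ge s$ one has $|I|\,s\le n$, which forces $s$ to be small enough that $n^{1-o(1)}$ of these sets can be selected with union of size $\le n/2$, again yielding an all‑$0$ submatrix.

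The main obstacle is the intermediate regime: the largest clique of $G$ and the largest pairwise‑disjoint subfamily are both of polynomial but sub‑$n^{1-o(1)}$ size (think $\sqrt n$), while $s$ is itself large. Here neither a single clique nor a single disjoint family is large enough, and one must glue together several cliques of $G$: cover at least $N/2$ rows of $\mathcal R$ by cliques, added greedily in increasing order of the number of new columns they contribute, and bound the union of their $1$‑sets by $n/2$, using the almost‑laminar structure forced by the bounded ``links'' (any two intersecting $1$‑sets differ in fewer than $k$ elements on one side) together with the fact that $1$‑sets of size $s$ cannot be packed more than $n/s$ times disjointly. Carrying out this quantitative balancing uniformly in $s$ — and likewise handling rows whose support is a constant fraction of $n$, where the ``$S_u\cup S_v=[n]$'' alternative remains live — is the technical heart of the proof; once it is in place, the $\ge N/2=n^{1-o(1)}$ covered rows together with the $\ge n/2$ remaining columns constitute the desired homogeneous submatrix.
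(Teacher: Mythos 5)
The central step — reducing to a single ``universal'' pattern $P_k$ and then characterising $P_k$-freeness purely in terms of the 1-sets $S_u,S_v$ — is where the proposal breaks down, because it ignores the column order. First, no fixed $2\times(2k+2)$ matrix with exactly one $\binom{0}{0}$ column and one $\binom{1}{1}$ column can contain every admissible $P$ as an \emph{ordered} submatrix: a $P$ in which $\binom{0}{0}$ precedes $\binom{1}{1}$ and one in which the order is reversed cannot both embed unless $P_k$ carries two copies of a homogeneous column. Second, and more decisively, the asserted equivalence ``$A$ is $P_k$-free iff for all rows $u,v$: $S_u\cup S_v=[n]$, or $S_u\cap S_v=\emptyset$, or $\min(|S_u\setminus S_v|,|S_v\setminus S_u|)<k$'' is correct only for \emph{unordered} $P_k$-freeness. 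An ordered-$P_k$-free $A$ can easily contain a pair of rows exhibiting a $\binom{0}{0}$ column, a $\binom{1}{1}$ column, and $k$ columns of each mixed type — just not in the order $P_k$ prescribes — so the structural dichotomy your whole argument rests on does not follow from the hypothesis. In effect, what you have sketched is a proof of the unordered statement, \Cref{thm:graph}: compare \Cref{lemma:unordered}, where the paper really does reduce to a pattern $R$ that ``contains an ordering of the columns of $P$'' and then runs a closely analogous $S_u,S_v$ analysis to get a linear bound. That argument is not available here because unordered $P$-freeness is a strictly stronger hypothesis than ordered $P$-freeness.

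The paper handles the ordering constraint by a genuinely different mechanism (\Cref{lemma:ordered}): it first finds a set of rows $V$ and $k$ consecutive column intervals $I_1,\dots,I_k$ separated by all-$0$ ``nice'' columns, with every row of $V$ dense in $0$-entries inside each $I_i$; this lets the $i$'th column of $P$ be tested inside $I_i$, so the left-to-right structure of $P$ is built in. It then associates to each inhomogeneous column of $P$ a comparability graph on $V$ (via containment of $0$-supports), and the $n^{1-o(1)}$ bound is exactly what the Fox--Pach theorem on unions of $k$ comparability graphs delivers. Your proposal has no analogue of this nice-tuple preprocessing and hence no way to respect the order of the columns of $P$; moreover, you yourself flag the final clique-gluing step as the ``technical heart'' left open. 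So even setting the ordered/unordered confusion aside, the argument is incomplete at the point where the intermediate regime must be handled.
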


Of course, one can obtain an analogous result for $k\times 2$ matrices by working with the transposes. In particular, we find a homogeneous $n^{1-o(1)}\times n^{1-o(1)}$ submatrix for any $2\times k$ or $k\times 2$ matrix $P$ with no $2\times 2$ homogeneous submatrix. Moreover, every $1\times k$ matrix can be extended to such a $2\times k$ matrix, so this also holds for $1\times k$ and $k\times 1$ matrices.

We should also point out that, as permuting rows or columns does not affect homogeneous submatrices, the same results hold if we only assume that $A$ can be made $P$-free by reordering its rows and columns. 

\medskip
Our problem arises naturally in numerous combinatorial and geometrical settings. When $A$ represents a bipartite graph, $P$ corresponds to a forbidden induced (ordered) subgraph, and a homogeneous submatrix is a complete or empty bipartite subgraph. When $A$ represents an incidence relation in geometry, $P$ is often a geometrically impossible pattern, and a homogeneous submatrix corresponds to two completely intersecting or disjoint families. When $A$ is the incidence matrix of a hypergraph, a homogeneous submatrix gives a set of hyperedges and a completely disjoint or completely contained set of vertices. We list a few specific applications to chordal bipartite graphs, totally balanced matrices, halfplane-arrangements and string graphs in \Cref{sec:applications}.

Several closely related problems have been studied in the literature, including certain Erd\H{o}s-Hajnal type questions and the Tur\'an problem for ordered graphs and forbidden patterns. We discuss some connections and differences in \Cref{sec:remarks}.

\medskip
As mentioned above, it is not true that forbidding any submatrix $P$ forces an almost linear-size homogeneous submatrix.

\begin{definition} \label{simple}
A \zm matrix $P$ is called \emph{acyclic} if every submatrix of $P$ has a row or column containing at most one 1-entry.  The {\em complement} of $P$ is the matrix $P^c$ obtained from $P$ by replacing the 1-entries with 0s and the 0-entries with 1s.
We say that $P$ is \emph{simple} if both $P$ and $P^c$ are acyclic.
\end{definition}

It is easy to see that $P$ is acyclic if and only if the bipartite graph with biadjacency matrix $P$ is acyclic. If $P$ is not simple, then there are $P$-free \zm matrices with only small homogeneous submatrices. The (fairly standard) probabilistic construction will be given in \Cref{sec:tools}.

\begin{prop} \label{prop:nonsimple}
    Let $P$ be a \zm matrix. If $P$ is not simple, then there is a $P$-free $n\times n$ \zm matrix $A$ with no homogeneous $n^{1-\eps}\times n^{1-\eps}$ submatrix for every large enough $n$, where $\eps=\eps(P)$ is a positive constant.
\end{prop}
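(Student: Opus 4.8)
The plan is to produce the matrix $A$ by a random construction followed by a deletion (alteration) step, balancing two competing requirements: the matrix must be sparse enough that it contains few copies of (a small piece of) $P$, yet dense enough that it has no large all-$0$ submatrix. First, a reduction: since $P$ is not simple, at least one of $P$, $P^c$ is not acyclic, and replacing $P$ by $P^c$ is harmless, because a matrix $B$ is $P^c$-free if and only if its complement $B^c$ is $P$-free, and $B$, $B^c$ have exactly the same homogeneous submatrices (with the roles of $0$ and $1$ interchanged). So we may assume $P$ itself is not acyclic. Then the bipartite graph $G_P$ with biadjacency matrix $P$ contains a cycle, hence a chordless cycle $C_{2\ell}$ for some $\ell\ge 2$; let $Q$ be the $\ell\times\ell$ submatrix of $P$ induced on the rows and columns corresponding to the vertices of this cycle. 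Since the cycle is chordless, $Q$ is exactly the ``cyclic'' pattern, namely the $\ell\times\ell$ \zm matrix with precisely two $1$-entries in each row and in each column (so $2\ell$ ones in total), and its bipartite graph is a single $2\ell$-cycle. As $Q$ is a submatrix of $P$, every $Q$-free matrix is $P$-free, so it suffices to construct a $Q$-free $n\times n$ matrix with no large homogeneous submatrix.

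The construction: fix $\eps=\eps(P):=1/(4\ell)$, set $N=2n$, and let $A\in\{0,1\}^{N\times N}$ have independent entries, each equal to $1$ with probability $p=cN^{-(2\ell-1)/(2\ell)}$, where $c=c(\ell)>0$ is a small constant to be chosen. Two estimates are needed. First, the expected number of copies of $Q$ in $A$ is at most $\binom{N}{\ell}^2 p^{2\ell}\le N^{2\ell}p^{2\ell}=c^{2\ell}N$, so choosing $c$ small enough this is at most $N/4$, and by Markov's inequality, with probability at least $1/2$ the matrix $A$ contains at most $N/2=n$ copies of $Q$. Second, with $k=\lceil N^{1-\eps}\rceil$, the expected number of all-$0$ (resp.\ all-$1$) $k\times k$ submatrices of $A$ is at most $\binom{N}{k}^2(1-p)^{k^2}\le\exp\!\big(2k\ln N-pk^2\big)$ (resp.\ at most $N^{2k}p^{k^2}$); since $pk^2=\Theta\!\big(N^{\,1-2\eps+1/(2\ell)}\big)$ and $1-2\eps+1/(2\ell)>1-\eps$, this term dominates $2k\ln N=O(N^{1-\eps}\ln N)$, so both expectations tend to $0$. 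Hence with probability $\to 1$ the matrix $A$ has no homogeneous $k\times k$ submatrix.

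Combining, for $n$ large enough there is a choice of $A$ that has at most $n$ copies of $Q$ and no homogeneous $\lceil N^{1-\eps}\rceil\times\lceil N^{1-\eps}\rceil$ submatrix. Fix such an $A$, delete one row from each copy of $Q$ (removing at most $n$ rows, so at least $n$ remain), and restrict the result to any $n$ of the surviving rows and any $n$ columns. The resulting $n\times n$ matrix is $Q$-free, since deleting rows and columns cannot create a new copy of $Q$, hence it is $P$-free; and it still contains no homogeneous $\lceil N^{1-\eps}\rceil\times\lceil N^{1-\eps}\rceil$ submatrix, which for large $n$ is smaller than $n^{1-\eps/2}$, so after renaming $\eps/2$ as $\eps$ we obtain the claim.

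There is no deep obstacle here; it is a routine application of the deletion method. The one point that requires genuine care is the choice of the exponent in $p$: it must be small enough that the expected number of copies of $Q$ is only linear in $N$ (so that a single row-deletion per copy still leaves an $n\times n$ matrix), yet large enough that $p\gg N^{-1+1/(2\ell)}$ so that no all-$0$ submatrix of size $N^{1-\eps}$ survives. These two demands barely fit together precisely because the chordless cycle $Q$ is the extremal case in which the number of edges equals the number of vertices ($2\ell$ of each); this is also the reason one must pass to $Q$ rather than argue with $P$ itself, since $P$ may be arbitrarily sparse (for instance a small cycle padded with all-$0$ rows and columns), in which case the corresponding count would not go through.
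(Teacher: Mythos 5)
Your proof is correct and follows essentially the same route as the paper: reduce to the case where $P$ (or $P^c$) is not acyclic, then apply the probabilistic deletion method to a random matrix with entry probability calibrated so that the expected number of copies of the forbidden pattern is linear while no almost-linear homogeneous submatrix appears. The only differences are cosmetic---you extract a chordless cycle $Q$ from $P$ rather than the paper's coarser reduction to a submatrix with minimum degree two (which yields a slightly larger admissible $\eps$, but the statement only asks for some positive $\eps(P)$), and you delete rows from a $2n\times 2n$ matrix rather than columns from an $n\times 2n$ one.
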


\Cref{prop:nonsimple} shows that \Cref{thm:2by2,thm:2byk} are optimal in terms of the matrices covered: the remaining $2\times 2$ or $2\times k$ matrices are not simple, so these statements cannot hold for them. In fact, \Cref{thm:2byk,prop:nonsimple} almost completely characterize which forbidden matrices force an almost-linear homogeneous submatrix, because they only miss a finite number of simple matrices. Indeed, a simple $k\times \ell$ matrix can contain at most $k+\ell-1$ 0-entries and at most $k+\ell-1$ 1-entries, so it must satisfy $2k+2\ell -2 \ge k\ell$, or, equivalently, $(k-2)(\ell-2)\le 2$. So, apart from the matrices treated in \Cref{thm:2byk}, only $3\times 3$, $3\times 4$ and $4\times 3$ matrices can be simple.

We believe that a similar statement should hold for the remaining simple matrices, as well. In fact, we make the following stronger conjecture.

\begin{conjecture}\label{conj:simple}
Let $P$ be a simple \zm matrix. Then every $P$-free $n\times n$ \zm matrix contains an $cn\times cn$ homogeneous submatrix, for a suitable constant $c>0$.
\end{conjecture}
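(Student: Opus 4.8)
\medskip
\noindent
The plan is to reduce \Cref{conj:simple} to a statement about \emph{dense} matrices and then to a finite case analysis. Since every $n\times n$ \zm matrix has at least $n^2/2$ entries equal to $1$ or at least $n^2/2$ equal to $0$, and $P^c$ is simple whenever $P$ is, it suffices to find, for each simple $P$, a constant $c=c(P)>0$ such that every $P$-free $n\times n$ matrix $A$ with at least $n^2/2$ ones contains a $cn\times cn$ all-$1$ submatrix (the all-$0$ case then follows by applying this to $A^c$ and $P^c$). I would first dispose of the one-line patterns. If $P$ is $1\times k$ with $a$ ones and $b$ zeros, then $A$ is $P$-free precisely when every row of $A$ has fewer than $a$ ones or fewer than $b$ zeros; since $A$ is dense, a constant fraction of its rows are of the latter type and hence have at most $b-1$ zeros, and keeping a uniformly random set $T$ of $cn$ columns together with those rows whose zeros all avoid $T$ yields a $cn\times cn$ all-$1$ submatrix (the $k\times 1$ case is the transpose). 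By the counting bound in the paragraph preceding this conjecture, the only remaining simple patterns are the $2\times k$ and $k\times 2$ matrices and the finitely many simple $3\times 3$, $3\times 4$ and $4\times 3$ matrices.

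For the $2\times k$ patterns --- where even \Cref{thm:2byk} gives only an almost-linear bound --- the aim is to extract enough structure from $P$-freeness to reach a linear bound. Writing $P=\binom{p_1}{p_2}$, I would analyse the constraint that forbidding $P$ imposes on the pairwise interaction of the row supports of $A$: for example, forbidding $\binom{1\ 0}{0\ 1}$ forces these supports to form a chain, and forbidding $\binom{1\ 0}{1\ 0}$ forces any two of them to be nested, disjoint, or covering. The plan is to show that after deleting $o(n)$ rows and columns, $A$ decomposes into boundedly many blocks on each of which the rows are linearly ordered in an appropriate ``staircase'' sense; the density hypothesis (or, on the degenerate blocks, the one-line argument above) then yields a linear all-$1$ block inside one of them.

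The remaining $3\times 3$, $3\times 4$ and $4\times 3$ patterns I would handle by induction on $k+\ell$ for a $k\times\ell$ pattern $P$, the base cases being the one-line and $2\times \ell$ (and $\ell\times 2$) patterns treated above. Since $P$ is acyclic, it has a row or column with at most one $1$; deleting it produces a smaller simple pattern $P'$, and $P$ is recovered from $P'$ by adjoining a single line of a prescribed type. If $A$ is itself $P'$-free we are done by induction; otherwise $A$ contains copies of $P'$, and $P$-freeness asserts that no copy of $P'$ can be extended by a line of the prescribed type. Counting the copies of $P'$ against the lines that fail to extend them --- by a dependent-random-choice argument applied to the bipartite incidence structure between the lines of $A$ and the tuples supporting copies of $P'$ --- should again force a linear all-$1$ submatrix.

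The hard part is obtaining a genuinely \emph{linear} homogeneous submatrix rather than the $n^{1-o(1)}$ of \Cref{thm:2byk}. The recursive arguments behind \Cref{thm:2by2,thm:2byk} lose a polynomial factor at each round, so a linear bound seems to call for a one-shot structural description of $P$-free matrices that is uniform over all simple $P$; producing such a description, and in particular making the ``few copies of $P'$ versus many copies of $P'$'' dichotomy in the inductive step robust enough to yield a linear-sized --- not merely almost-linear --- block, is where I expect the real difficulty to lie.
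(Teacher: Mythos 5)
The statement you are trying to prove is \Cref{conj:simple}, which the paper explicitly states as an open \emph{conjecture}, not a theorem. The authors prove it only for $2\times 2$ patterns (\Cref{thm:2by2}), for certain special patterns like $Q=\binom{1\;0}{0\;0}$ (\Cref{thm:2}), for patterns with at most one $1$ per column (\Cref{thm:perm}), and for unordered $2\times k$ patterns (\Cref{thm:graph}); for general simple $2\times k$ patterns they obtain only $n^{1-o(1)}$ (\Cref{thm:2byk}). So there is no proof in the paper to compare against, and any purported full proof would be a substantial new result. Your submission is, by your own admission in the final paragraph, a plan rather than a proof: you flag the jump from $n^{1-o(1)}$ to $cn$ for the $2\times k$ case as ``where I expect the real difficulty to lie,'' and that is precisely the hard content the paper could not resolve either.

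Beyond being incomplete, the sketch contains concrete errors. For the $1\times k$ case, the claim that ``$A$ is $P$-free precisely when every row of $A$ has fewer than $a$ ones or fewer than $b$ zeros'' is false for \emph{ordered} patterns: take $P=(0\ 1)$ and a row $(1\ 1\ 1\ 0\ 0\ 0)$, which is $P$-free despite having three ones and three zeros. Only the forward implication ``few ones or few zeros $\Rightarrow$ $P$-free'' holds, so $P$-freeness does not give you the many low-zero rows your random-column argument needs. (The $1\times k$ case is still true --- here every row is a prefix of $1$s followed by a suffix of $0$s up to insertion of a bounded number of defects, and a direct pigeonhole on which half of the columns is mostly homogeneous works --- but not by the argument you wrote.) For the $2\times k$ case, the claim that forbidding $\binom{1\ 0}{0\ 1}$ ``forces the row supports to form a chain'' is also false: the matrix $\binom{0\;0\;1}{1\;0\;0}$ avoids $\binom{1\;0}{0\;1}$ yet its row supports $\{3\}$ and $\{1\}$ are incomparable. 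Ordered $P$-freeness is a genuinely weaker hypothesis than the poset-theoretic structure you are importing; indeed the paper's discussion after \Cref{lemma:longmtx} cites a F\"uredi--Hajnal construction showing that $P_4$-free matrices with $\eps n^2$ zeros need not contain all-$0$ submatrices of size $\Omega(\eps n)$, which already rules out the kind of clean ``staircase decomposition into boundedly many linearly ordered blocks'' that your second paragraph postulates. The $3\times 3$, $3\times 4$, $4\times 3$ induction step in the third paragraph is phrased as a hope (``should again force a linear all-$1$ submatrix'') rather than an argument, and it also presupposes the unproved $2\times\ell$ base case. In short: this is a reasonable research programme for attacking the conjecture, but it is not a proof, and two of its intermediate claims are wrong as stated.
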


Much of the difficulty in our results comes from the ordered structure of matrices. We can obtain better results if we relax the notion of matrices to ``unordered'' matrices, where the order of the rows and the columns does not matter. We can then say that a \zm matrix is {\em unordered $P$-free} if it does not contain any submatrix whose rows and columns can be permuted to obtain $P$. We show that \Cref{thm:2byk} holds for unordered $2\times k$ matrices.

\begin{theorem}\label{thm:graph}
 Let $P$ be a $2\times k$ \zm matrix that does not contain a $2\times 2$ homogeneous submatrix. Then every \emph{unordered $P$-free} $n\times n$ \zm matrix $A$ contains a homogeneous $cn\times cn$ submatrix, for a suitable constant $c>0$.	
\end{theorem}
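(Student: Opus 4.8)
The plan is to induct on $k$, the number of columns of $P$. The base cases are easy: for $k\le 1$ unordered $P$-freeness forces every column of $A$ to be constant (or to have at most one $1$, or at most one $0$), and for $k=2$ the matrix $P$ is an inhomogeneous $2\times 2$ matrix, so \Cref{thm:2by2} already gives a homogeneous $cn\times cn$ submatrix. Write $N(r)\subs C$ for the set of columns in which a row $r$ has a $1$; let $a$ (resp.\ $b$) be the number of columns of $P$ equal to $\binom 01$ (resp.\ $\binom 10$), and let $s,t\in\{0,1\}$ record whether $\binom 00$ resp.\ $\binom 11$ occurs as a column of $P$. By hypothesis $a,b,s,t$ determine $P$ up to permuting its rows and columns, and passing to $A^c,P^c$ if needed (which preserves all hypotheses and homogeneous submatrices) we may assume $a\ge b$.

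The engine is a \emph{staircase lemma}. Order the rows so that $|N(r_1)|\le\cdots\le|N(r_n)|$. Unpacking unordered $P$-freeness for a pair $\{r_i,r_j\}$ with $i<j$, playing $r_i$ as the bottom row of $P$ and using $|N(r_i)\setminus N(r_j)|\le|N(r_j)\setminus N(r_i)|$, gives: for all $i<j$, either $|N(r_i)\setminus N(r_j)|<a$, or $N(r_i)\cap N(r_j)=\emptyset$ (which requires $t=1$), or $N(r_i)\cup N(r_j)=C$ (which requires $s=1$). The lemma is: if $W$ is a set of $m\le n/(2a)$ rows with $|N(r_i)\setminus N(r_j)|<a$ for all $r_i,r_j\in W$ with $i<j$, then $W$ spans a homogeneous submatrix with $\Omega(m)$ rows and $\Omega(n)$ columns. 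Indeed, along $W$ at most $(m-1)(a-1)\le n/2$ columns ever switch from $1$ to $0$, so at least $n/2$ columns restrict on $W$ to $0\cdots0\,1\cdots1$; on those columns the rows of $W$ form a chain $S_1\subs\cdots\subs S_m$, and comparing the middle set $S_{\lceil m/2\rceil}$ to half the number of these columns yields either an all-$1$ or an all-$0$ submatrix of size $\lceil m/2\rceil\times(n/4)$. Since $a$ is a constant, taking $m=\lfloor n/(2a)\rfloor$ produces an $\Omega(n)\times\Omega(n)$ homogeneous submatrix. Thus it always suffices to find $\Omega(n)$ rows on which the ``$<a$'' option holds pairwise.

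\textbf{Case $s=t=0$.} Here the structure statement gives $|N(r_i)\setminus N(r_j)|<a$ for \emph{all} $i<j$, and the staircase lemma applied to the first $\lfloor n/(2a)\rfloor$ rows finishes (no induction needed).

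\textbf{Case $t=1$ (so $\binom 11$ is a column of $P$).} Let $P'$ be $P$ with one $\binom 11$ column deleted; $P'$ is a $2\times(k-1)$ matrix still satisfying the hypothesis. The key point: if $X\subs R$ consists of \emph{pairwise intersecting} rows (any two share a $1$-column), then $A[X,C]$ is unordered $P'$-free, because a copy of $P'$ on two rows of $X$ could be extended by their common $1$-column to a copy of $P$. Hence if $A$ contains $\Omega(n)$ pairwise intersecting rows, the inductive hypothesis applied to any $\Omega(n)\times\Omega(n)$ square of $A[X,C]$ finishes. Now if $A$ has at least $n^2/2$ ones, some column has $\ge n/2$ ones, and those rows are pairwise intersecting --- done. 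If $A$ has fewer ones, then $A^c$ is dense and unordered $P^c$-free; when $P^c$ also has a $\binom 11$ column (equivalently $s=1$), the same argument inside $A^c$ supplies $\Omega(n)$ pairwise intersecting rows of $A^c$ and reduces, via $P^c$ minus a $\binom 11$ column, to a smaller value of $k$. The configuration \emph{not} closed by this dichotomy is when $P$ has exactly one of $\binom 00,\binom 11$ and (after complementing) the relevant matrix is \emph{sparse with every column sparse}: then no $\Omega(n)\times\Omega(n)$ all-$1$ submatrix can exist and one must build a large all-$0$ submatrix directly. Here a finer use of $P$-freeness is needed: the structure statement forces the sets $N(r)$ to be ``almost laminar'' on intersecting pairs, hence to cluster; bucketing rows by the size of their support and running a Gallai-type matching/covering argument --- a large subfamily of small-support rows that is pairwise disjoint gives the all-$0$ block outright, and otherwise all those rows are hit by a small set of columns, again giving the block --- should handle it.

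I expect this ``sparse, balanced'' subcase of the $t=1$ case to be the main obstacle; everything else reduces mechanically to the staircase lemma, plus the observation that a dense matrix (or a dense complement) automatically furnishes the large family of pairwise intersecting rows that drives the induction.
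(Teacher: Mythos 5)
Your staircase lemma is correct, and it does capture the key structural dichotomy that unordered $P$-freeness imposes on an ordered pair of rows: either one row's $1$-set is ``almost contained'' in the other's, or the $1$-sets are disjoint, or their union is everything. The trouble is that the overall argument does not close, for a reason that is in the end unavoidable by this inductive scheme, and you partially recognize this.

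Concretely, your induction cannot escape the mismatched case where $P$ contains exactly one of $\binom{0}{0}$, $\binom{1}{1}$. You handle $s=t=0$ by the staircase lemma, and you set up a reduction for $s=t=1$ by stripping a $\binom{1}{1}$ (or $\binom{0}{0}$) column off $P$ using a large family of pairwise intersecting rows; but the $P'$ this produces has exactly one of $\binom{0}{0},\binom{1}{1}$, so the induction only terminates if the mismatched case is settled. In that case, after complementing, $P$ has a $\binom{1}{1}$ column but no $\binom{0}{0}$ column, and the matrix is dense in $0$-entries, so there may be no column with $\Omega(n)$ ones and the ``peel off a $\binom{1}{1}$ column'' move is unavailable. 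Your sketch for this case (``bucket by support size; either a large pairwise-disjoint family of small-support rows, or a small hitting set'') is not a proof: the pairwise dichotomy $|N(r_i)\setminus N(r_j)|<a$ or $N(r_i)\cap N(r_j)=\emptyset$ does not straightforwardly yield either alternative, and extracting a homogeneous block from it is precisely the hard structural content of the theorem. Reducing $P$ to the universal $2\times(2k+2)$ matrix with both $\binom{0}{0}$ and $\binom{1}{1}$ columns at the outset does not help, because your induction step immediately drops back into the mismatched case.

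The paper's proof is a direct, non-inductive argument that is engineered exactly to sidestep this. It replaces $P$ by the universal matrix $R$, passes to a submatrix whose rows all have many $0$-entries and which additionally has an all-$0$ column. That spare all-$0$ column ``pays for'' the $\binom{0}{0}$ column of $R$ for free, so the pairwise dichotomy collapses to just two alternatives: almost-containment of $0$-sets, or $0$-sets covering the whole column set. It then builds a DAG on the rows from the almost-containment relation, observes that the complements of the $0$-sets of the minimal vertices are pairwise disjoint by the second alternative, and selects a subfamily of minimal vertices to obtain, simultaneously, $\Omega(n)$ rows and $\Omega(n)$ columns on which the matrix vanishes. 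If you want to salvage your approach, you will need an argument of this kind for the sparse mismatched case; the induction and the staircase lemma alone are not enough.
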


Results about unordered matrices can be thought of as results about bipartite graphs. In the language of graphs, \Cref{thm:graph} implies the following statement: let $H_{s,t}$ be a star of size $s$ and a star of size $t$ glued together at one of their leaves, and let $H_{s,t}^{*}$ be the union of $H_{s,t}$ and an isolated vertex. Let $H$ be an induced subgraph of $H_{s,t}^{*}$, and let $G=(A\cup B,E)$ be an induced $H$-free bipartite graph with $|A|=|B|=n$. Then there are linear-size subsets $A_{0}\subs A$ and $B_{0}\subs B$ such that $A_{0}\cup B_{0}$ induces either a complete or an empty bipartite graph in $G$. This latter statement has been proved independently by Axenovich, Tompkins, and Weber \cite{ATW}.

\bigskip
Our paper is organized as follows. In \Cref{sec:results}, we state a number of further results, which imply \Cref{thm:2by2,thm:2byk}, but might be of interest on their own. The proof of these (positive) results are given in \Cref{sec:chessboard,sec:corner,sec:general,sec:perm}. Our negative result, \Cref{prop:nonsimple}, is proved in \Cref{sec:tools}. Finally, we prove \Cref{thm:graph} in \Cref{sec:graph}.

We finish the paper with a few applications in \Cref{sec:applications} and some further connections and remarks in \Cref{sec:remarks}.

\section{Forbidden submatrices} \label{sec:results} 

Our first result is about $2\times k$ matrices that contain a 0-entry and a 1-entry in every column, establishing  \Cref{thm:2by2,thm:2byk} in this special case. 

\begin{theorem}\label{thm:1}
Let $P$ be a $2\times k$ \zm matrix without any homogeneous column. Then every $P$-free $n\times n$ \zm matrix contains a $c\frac{n}{k}\times c\frac{n}{k}$ homogeneous submatrix, for a suitable constant $c\ge 10^{-6}$.
\end{theorem}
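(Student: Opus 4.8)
The plan is to encode the forbidden pattern combinatorially, extract a structural consequence of $P$-freeness holding for \emph{every} pair of rows, and then combine it with a direct density argument. Since $P$ has no homogeneous column, every column of $P$ equals $\binom{0}{1}$ or $\binom{1}{0}$; record this by the word $w=w_1\cdots w_k\in\{0,1\}^k$, where $w_\ell=0$ exactly when the $\ell$-th column of $P$ is $\binom{0}{1}$. For two rows $r_1<r_2$ of $A$, reading off, in column order, the symbol $0$ for each $\binom{0}{1}$-column, $1$ for each $\binom{1}{0}$-column, and skipping the homogeneous columns, produces a binary word $\sigma(r_1,r_2)$, and $A$ is $P$-free precisely when no $\sigma(r_1,r_2)$ contains $w$ as a (not necessarily contiguous) subsequence. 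Simultaneously complementing $A$ and $P$ interchanges the two column types, hence replaces $w$ by its bitwise complement while preserving homogeneous submatrices, so I may assume $w$ begins with $0$; I may also assume $n\ge 10^{6}k$, since otherwise $cn/k<1$, and I dispose of $k=1$ directly (then, up to symmetry, every column of $A$ has the form $1^{*}0^{*}$, and splitting at the median row and column yields an $\tfrac n2\times\tfrac n2$ homogeneous submatrix).

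The key structural lemma I would isolate is the following: for each pair $r_1<r_2$, feeding $\sigma(r_1,r_2)$ into the automaton that tracks the longest prefix of $w$ matched so far partitions the columns into at most $k$ consecutive intervals according to the current state; since $A$ is $P$-free this state never reaches $k$, and on the interval where the state equals $\ell$, every column on which $r_1,r_2$ differ has the single type $1-w_{\ell+1}$. In other words, the columns break into at most $k$ consecutive intervals, on each of which the pair $(r_1,r_2)$ is \emph{monotone}: either $A[r_1,c]\le A[r_2,c]$ throughout, or $A[r_1,c]\ge A[r_2,c]$ throughout. Moreover, because $w$ starts with $0^{a_1}$ for some $a_1\ge1$, the first of these intervals---the columns seen before the $a_1$-th $\binom{0}{1}$-column---is always a prefix on which the pair is monotone of type $\binom{1}{0}$.

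Next I would treat the single-run case $w=0^{k}$, i.e.\ $P=\left(\begin{smallmatrix}0&\cdots&0\\1&\cdots&1\end{smallmatrix}\right)$, which is the natural base case and the source of the constant. Writing $S_r$ for the set of columns in which row $r$ has a $1$, $P$-freeness says exactly that $|S_{r_2}\setminus S_{r_1}|\le k-1$ for all $r_1<r_2$. Call a row \emph{heavy} if $|S_r|\ge n/2$. If at least $n/(100k)$ rows are heavy, take $t=\lceil n/(100k)\rceil$ of them with indices $i_1<\cdots<i_t$; then $|S_{i_1}\cap\cdots\cap S_{i_t}|\ge |S_{i_t}|-\sum_{s<t}|S_{i_t}\setminus S_{i_s}|\ge \tfrac n2-(t-1)(k-1)>\tfrac n3$, giving an all-$1$ submatrix of size $t\times\tfrac n3$. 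Otherwise more than half the rows are \emph{light}; let $a$ be the first light row, put $Z=[n]\setminus S_a$ (so $|Z|>n/2$), and note every later light row $r$ has $|S_r\cap Z|=|S_r\setminus S_a|\le k-1$, so the submatrix of $A$ on the light rows other than $a$ and on the columns $Z$ has at most $k-1$ ones per row; a greedy deletion (each chosen row destroys at most $k-1$ columns) then extracts an all-$0$ submatrix of size at least $\tfrac{n}{4k}\times\tfrac n4$. Either way we obtain a homogeneous submatrix of size at least $\tfrac n{100k}\times\tfrac n{100k}$, and by the complementation symmetry the same holds for $w=1^{k}$.

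For general $w$ the plan is to reduce, run by run, to the single-run case: using the prefix structure from the lemma together with the fact that reaching automaton state $a_1$ forces $a_1$ columns of type $\binom{0}{1}$, one would locate a large column-block $C$ and row-set $R$ on which $A[R,C]$ is free of $\left(\begin{smallmatrix}0&\cdots&0\\1&\cdots&1\end{smallmatrix}\right)$ with $a_1$ columns---finishing by the previous paragraph---or else a large submatrix free of the pattern with word $w_{a_1+1}\cdots w_k$, which has strictly fewer columns, and recurse. The crux, and the step I expect to be the main obstacle, is carrying out this aggregation without losing too much: the monotone-interval decomposition depends on the pair of rows, so fixing one long column-block keeps only a constant fraction of pairs monotone on it, and a constant fraction of monotone pairs need not contain a long monotone subsequence (two concatenated decreasing chains is a counterexample). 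To obtain $cn/k$ rather than a bound exponentially small in $k$, the induction must be arranged to lose only a factor like $\tfrac{k-1}{k}$ per run, which forces a careful simultaneous choice of $R$ and $C$ rather than a naive pigeonhole; that is where the detailed work lies.
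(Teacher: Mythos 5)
Your automaton lemma is correct, your direct handling of the single-run pattern $\left(\begin{smallmatrix}0&\cdots&0\\1&\cdots&1\end{smallmatrix}\right)$ is a clean argument (both the heavy-row intersection bound and the greedy column deletion check out), and the $k=1$ case is disposed of properly. The problem is that you stop short of the general case: the run-by-run reduction is only sketched, and you yourself flag the ``careful simultaneous choice of $R$ and $C$'' as an unresolved obstacle. That obstacle is real and not merely a matter of bookkeeping. The state-interval decomposition you obtain from the automaton depends on the \emph{pair} of rows, so fixing any single column block retains only some unknown subfamily of pairs in a prescribed state, and nothing in the proposal explains how to extract from that a large row set $R$ on which a common block $C$ sees only the single-run pattern. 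Without that step the argument, as written, proves the theorem only for $P = \left(\begin{smallmatrix}0&\cdots&0\\1&\cdots&1\end{smallmatrix}\right)$ and its complement, not for a general $2\times k$ matrix with no homogeneous column.

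For contrast, the paper avoids any induction on the structure of $w$ and needs no run-by-run aggregation. It first notes that every $2\times k$ matrix without a homogeneous column is a submatrix of the $2\times 2k$ checkerboard $P_{2k}$ (where $P_{2k}(i,j)=1$ iff $i+j$ is even), so it suffices to prove a density statement for $P_{2k}$ alone. It then partitions $A$ into $s^2$ blocks of side $n/s$ and finds a row-band meeting many heavy blocks. A local lemma (\Cref{lem:density}) shows that a heavy $P_2$-free block already contains a large all-$0$ submatrix; failing that, each heavy block contributes $\Omega((n/s)^2)$ row pairs containing $P_2=\left(\begin{smallmatrix}1&0\\0&1\end{smallmatrix}\right)$, and a pigeonhole over the $k$ column blocks produces one row pair that realizes $P_2$ in $k$ distinct blocks, hence realizes $P_{2k}$. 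This single global counting step is exactly the mechanism your run-by-run plan is missing: it aggregates over all pairs of rows and all column blocks at once, rather than trying to follow the state of one pair. If you want to complete your approach you would need a comparable device; as it stands, the proof has a genuine gap at the point you identified.
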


As rotation and taking complements does not affect the problem, there is essentially one simple $2\times 2$ matrix not covered by this theorem: $Q=\begin{pmatrix}1 & 0  \\ 0 & 0 \end{pmatrix}$. When $P$ cannot be rotated into a $2\times k$ matrix without homogeneous columns, the problem becomes more difficult. $Q$ is the only such matrix where we can prove a linear lower bound.

\begin{theorem}\label{thm:2}
Let $Q=\begin{pmatrix}1& 0  \\ 0 & 0 \end{pmatrix}$. Then every $Q$-free $n\times n$ \zm matrix contains an $cn\times cn$ homogeneous submatrix, for a suitable constant $c\ge 1/20$.
\end{theorem}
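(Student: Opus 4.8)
The plan is to turn the exclusion of $Q=\begin{pmatrix}1&0\\0&0\end{pmatrix}$ into a ``staircase'' structure and read a homogeneous block off it. The basic observation is this: if $j<j'$ are columns, then among the rows in which column $j'$ has a $0$, every row having a $1$ in column $j$ lies \emph{below} every row having a $0$ in column $j$ --- otherwise a $(1,0)$-row would sit above a $(0,0)$-row in columns $j,j'$, giving a copy of $Q$. Hence, fixing any column $j^*$ and restricting $A$ to the rows $R=\{i:A[i,j^*]=0\}$ and to the columns $1,\dots,j^*$ yields a matrix in which every column has all of its $0$-entries above all of its $1$-entries; call such a matrix a \emph{staircase}. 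Since $Q=Q^{\mathsf T}$, the transposed statement holds as well, with rows and columns interchanged.

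Staircase matrices contain large homogeneous submatrices for free. In a $k\times m$ staircase, let $s$ be the median, over the $m$ columns, of the number of leading $0$-entries. At least $m/2$ columns have at least $s$ leading $0$s, so the first $s$ rows together with those columns form an all-$0$ block of size $s\times\lceil m/2\rceil$; and at least $m/2$ columns have at most $s$ leading $0$s, so the last $k-s$ rows together with those columns form an all-$1$ block of size $(k-s)\times\lceil m/2\rceil$. Since $\max(s,k-s)\ge k/2$, there is a homogeneous submatrix of size at least $\tfrac k2\times\tfrac m2$.

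Putting these together, everything follows except one case. If some column $j^*$ with $j^*\ge 2cn$ has at least $2cn$ zeros, the staircase on $R\times\{1,\dots,j^*\}$ has $|R|\ge 2cn$ rows and $j^*\ge 2cn$ columns, hence contains a homogeneous $cn\times cn$ submatrix; by transposition the same holds if some row of index $\ge 2cn$ has at least $2cn$ zeros. So assume no such column and no such row exists. Then, counting (at most $2cn$ zeros in each column of index $\ge 2cn$, at most $n$ in each of the fewer than $2cn$ earlier columns), $A$ has fewer than $4cn^2$ zeros in total; that is, more than $(1-4c)n^2$ of its entries equal $1$. It remains to handle this ``dense'' case by producing a large all-$1$ submatrix.

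The dense case is the main obstacle, and it really does use the $Q$-free hypothesis: merely knowing that $A$ has more than $(1-4c)n^2$ ones does not force any linear-size all-$1$ block (a random matrix of that density has none). The only leverage left is the ``prefix'' property --- for columns $j<j'$, the rows that are $0$ in both form an initial segment of the rows that are $0$ in $j'$ --- which, since the column zero-sets are now small, restricts how they can interact. The plan would be to use this to locate $\ge cn$ rows and $\ge cn$ columns whose common submatrix has all of its (few) zeros confined to a negligible corner, then delete that corner: one builds the two sets greedily, scanning the columns (or rows) and at each step discarding the few offending lines, and one exploits the prefix property to stop the discarded sets from piling up. Pushing the greedy argument through so that it yields a \emph{square} all-$1$ block --- rather than one of linear width but only bounded height, which is what a single use of the structure gives --- and balancing it against the size/density losses incurred above, is where the work lies; this is also where the explicit constant $c>1/20$ gets pinned down. (Equivalently, after complementing, one is looking for a large all-$0$ submatrix in a sparse $\begin{pmatrix}1&1\\1&0\end{pmatrix}$-free matrix.)
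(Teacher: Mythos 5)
Your staircase observation is correct and gives a clean alternative to the paper's preprocessing: for any column $j^*\ge 2cn$ with $|R|\ge 2cn$ zeros, where $R=\{i:A(i,j^*)=0\}$, the submatrix $A[R\times[j^*]]$ has in each column all of its zeros above all of its ones (else $Q$ would appear using that column and $j^*$), and the median-over-columns argument extracts a $cn\times cn$ homogeneous block. The paper's \Cref{lemma:1cornereasy} achieves the analogous reduction differently, by locating a $0$-entry $(i_0,j_0)$ preceded by $\eps n/2$ zeros in both its row and its column and observing that the block these zeros span must then be all-$0$.

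However, the proposal does not prove the theorem, because the dense case --- extracting a linear all-$1$ block from a $Q$-free matrix with at least $(1-\eps)n^2$ ones --- is left as an explicit sketch, and this is exactly where all the difficulty of \Cref{thm:2} lives. You identify the right residual structure (the prefix property of the column zero-sets) and the right obstruction (one use of it gives a strip that is wide but only of bounded height), but the greedy scan you outline is not an argument, and it is far from clear it can be made to produce a square block. The paper's \Cref{lemma:1corner} resolves this by a different mechanism: it defines a graph on the $0$-entries of one half of the matrix, joining two $0$-entries in the same row or column; shows (\Cref{claim:copy}) that a vertex reaching $t$ distinct rows via \emph{row-monotone} paths forces those $t$ rows to inherit all the $1$s of the source row in the other half, giving a $t\times\Omega(\eps n)$ all-$1$ block; shows (\Cref{claim:comp}) that the lowest-row vertex of each connected component reaches the whole component by row-monotone paths, via a shortest-walk argument proving the walk must alternate downward and rightward steps; and finishes with a block-diagonal count in the complementary case where every component spans few rows. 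Without an idea of this kind, your proposal establishes only the easy half of the theorem.
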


In the general case, we can show the following, somewhat weaker result.

\begin{theorem}\label{thm:3}
Let $P$ be a simple $2\times k$ \zm matrix. Then every $P$-free $n\times n$ \zm matrix contains an $n^{1-o(1)}\times cn$ homogeneous submatrix, for a suitable constant $c>0$.
\end{theorem}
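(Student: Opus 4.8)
The plan is to reduce the general case to \Cref{thm:1} by induction on the number of homogeneous columns of $P$. Since $P$-freeness is preserved — with $P$ replaced by another simple $2\times k$ matrix — under transposition, under reversing the order of the rows or of the columns, and under complementation, we may assume $P$ has at least one homogeneous column (otherwise \Cref{thm:1} already yields a linear homogeneous submatrix) and, after choosing one such column and complementing if necessary, that this column has two $1$-entries. A simple matrix has at most one all-$1$ column and at most one all-$0$ column, so $P$ has at most two homogeneous columns and the induction has at most two steps; deleting the chosen column from $P$ gives a simple $2\times(k-1)$ matrix $P^{-}$ with strictly fewer homogeneous columns. Throughout I would carry the rectangular strengthening ``every $P$-free $a\times b$ matrix contains a homogeneous $a^{1-o(1)}\times cb$ submatrix'', which in the base case (patterns with no homogeneous column) is just the rectangular form of \Cref{thm:1}; this also makes \Cref{thm:2} available as an alternative base case.

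The heart of the argument is the inductive step. Given a $P$-free matrix $A$, the goal is to find a large submatrix $A[R,C]$ — with $|C|$ linear in $n$ and $|R|\ge n^{1-o(1)}$ — that is $P^{-}$-free, and then apply the inductive hypothesis to it. The key point is that if $A[R,C]$ contained a copy of $P^{-}$, then re-inserting an all-$1$ column of $A$ into the gap where it was deleted would produce a copy of $P$ in $A$, which is impossible. So one splits the columns of $A$ into a bounded number of consecutive intervals, reserves one of them (call it $I$) for this re-insertion, and runs a win–win dichotomy: either $A$ restricted to $I$ is ``degenerate'' — sparse enough, or dense enough, in the relevant sense — that it already contains a homogeneous $n^{1-o(1)}\times cn$ submatrix and we are done; or we can pass to subsets $R$ of the rows and $C$ of the remaining columns, with $|R|\ge n^{1-o(1)}$ and $|C|$ linear, such that every pair of rows of $R$ has a column in $I$ carrying a $1$ in both rows. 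In the latter case the re-insertion is always available, so $A[R,C]$ is genuinely $P^{-}$-free, and the inductive hypothesis produces a homogeneous $|R|^{1-o(1)}\times c|C|=n^{1-o(1)}\times c'n$ submatrix of $A$.

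The main obstacle — and the place where the sub-polynomial loss enters — is this win–win step when the all-$1$ column lies in the \emph{interior} of $P$, say $P=[\,L\mid \binom{1}{1}\mid R\,]$ with $L,R$ nonempty. Reserving a single gap between chosen columns does not work, since in a copy of $P^{-}$ the columns flanking the deletion point may be consecutive among the selected ones, leaving no room. One is forced instead to place $L$ to the left of the reserved interval $I$ and $R$ to its right and to argue that no pair of rows can simultaneously support an $L$-copy on the left, an $R$-copy on the right, and a common $1$ in the middle. After the cleaning that guarantees the common $1$, this leaves, for every pair of rows, a two-way alternative (``no $L$-copy on the left'' or ``no $R$-copy on the right''); resolving this alternative uniformly over a row set of size $n^{1-o(1)}$, rather than merely logarithmic, is the delicate part — here one must exploit that the two auxiliary relations on the rows come from copies of homogeneous-column-free patterns and are therefore themselves structured, so that the relevant extraction is polynomial rather than Ramsey-logarithmic. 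Once $R$ and $C$ are in hand, the rest is routine bookkeeping with the rectangular form of \Cref{thm:1}.
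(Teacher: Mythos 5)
Your high-level plan (induction on the number of homogeneous columns of $P$, peeling off an all-$1$ column and reducing to a shorter pattern) is genuinely different from the paper's argument. The paper proves \Cref{thm:3} via \Cref{lemma:ordered}: after preprocessing $A$ to find $k+1$ ``nice'' all-$0$ columns and $k$ intervening column-intervals $A_1,\dots,A_k$, it defines for each column $i$ of $P$ a graph $G_i$ on the rows (edge $=$ column $i$ of $P$ cannot be placed in $A_i$ in that pair of rows), observes that $G_i$ is a \emph{comparability graph} when column $i$ of $P$ is inhomogeneous, and then invokes the Fox--Pach theorem on unions of comparability graphs (\Cref{thm:poset}) to extract the large bipartite pattern that gets translated back into an all-$0$ submatrix. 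The $n^{1-o(1)}$ loss comes exactly from the Fox--Pach bound $f_k(n)=n2^{-(1+o(1))(\log\log n)^k}$.

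The gap in your proposal is the step you yourself flag as ``the delicate part.'' Forbidding $P=[\,L\mid\binom{1}{1}\mid P_R\,]$ in $A$ (after the cleaning) gives you, for every \emph{pair} of rows, an alternative: no $L$-copy to the left of $I$ or no $P_R$-copy to the right. That is a $2$-colouring of pairs of a row set, equivalently two edge-disjoint graphs on $R$, and to continue you need a row set of size $|R|^{1-o(1)}$ that is independent in one of them. Edge-disjointness alone gives nothing better than Ramsey's $\Theta(\log|R|)$, and you offer no concrete structural reason (comparability, perfection, bounded VC-dimension, etc.) why these particular graphs admit a polynomial-size independent set --- this is precisely what the paper supplies by decomposing the forbidden-copy relation column by column and using that inhomogeneous columns give comparability graphs. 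Your appeal to ``$L$ and $P_R$ are homogeneous-column-free, hence structured'' is also not quite right: if $P$ contains both an all-$1$ and an all-$0$ column (which a simple $2\times k$ matrix may), then after deleting the all-$1$ column the surviving piece still carries the all-$0$ column, so $L$ or $P_R$ is not homogeneous-column-free. Finally, the win--win in the cleaning step is also left open: if $A$ restricted to $I$ is sparse in $1$-entries, the claim that it ``already contains a homogeneous $n^{1-o(1)}\times cn$ submatrix'' is exactly the statement you are trying to prove for $P$ (not for $P^-$), so as written that branch is circular. The inductive skeleton is an interesting alternative route, but the crucial extraction step would need either the paper's comparability-graph machinery or a genuinely new structural input to close.
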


\Cref{thm:2by2} then follows from \Cref{thm:1,thm:2}, and \Cref{thm:2byk} is equivalent to \Cref{thm:3}.
Theorems \ref{thm:1}, \ref{thm:2} and \ref{thm:3} are proved in Sections \ref{sec:chessboard}, \ref{sec:corner}, and \ref{sec:general}, respectively.

\medskip

Note that \Cref{conj:simple} is invariant under taking complements: the complement of a $P$-free \zm matrix $A$ is $P^c$-free, and if $P$ is simple, then so is $P^c$. We may therefore assume that 0 is the majority entry in $A$, and then try to find a large all-0 submatrix in it. Indeed, this is the approach we take to prove \Cref{thm:1,thm:2,thm:3}. More generally, we believe that the following strengthening of \Cref{conj:simple} might also be true.

\begin{conjecture} \label{conj:acyclic}
Let $P$ be an acyclic \zm matrix. Then for every $\eps>0$, there is a $\delta>0$, such that every $P$-free $n\times n$ \zm matrix with at least $\eps n^2$ 0-entries contains a $\delta n\times\delta n$ all-0 submatrix.
\end{conjecture}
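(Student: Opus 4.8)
The plan is to induct on the number of lines (rows plus columns) of $P$. It is convenient that \Cref{conj:acyclic} is already in a form suited to induction, since the hypothesis of having at least $\eps n^2$ zeros is inherited, up to a constant factor depending only on the number of deleted lines, by any submatrix obtained by deleting a few rows and columns. Because $P$ is acyclic, $P$ itself has a row or a column with at most one $1$-entry; call it an \emph{elimination line}, and observe that deleting it leaves an acyclic matrix $P'$ with one fewer line. The base case $1\times 1$ is trivial: the pattern $(1)$ forces the whole matrix to be all-$0$, while the pattern $(0)$ cannot be forbidden in a matrix that has any zero at all. So the statement follows once we establish the inductive step, which I would isolate as the following ``engine''.

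\emph{Engine Lemma.} Let $P$ be acyclic, let $P'$ be obtained from $P$ by deleting an elimination line, and suppose \Cref{conj:acyclic} holds for $P'$. Then for every $\eps>0$ there are $\eps',\eta,\delta>0$ such that every $P$-free $n\times n$ \zm matrix $A$ with at least $\eps n^2$ zeros either contains a $\delta n\times\delta n$ all-$0$ submatrix, or contains an $\eta n\times\eta n$ submatrix that is $P'$-free and has at least $\eps'(\eta n)^2$ zeros; in the latter case the assumed instance of the conjecture for $P'$ finishes the job, so that, unwinding the induction down to the $1\times 1$ base case, the Engine Lemma yields \Cref{conj:acyclic} for every acyclic $P$.

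To prove the Engine Lemma I would begin with a dichotomy on the ``line-covering number'' $\tau$ of $A$, the least number of rows and columns whose deletion makes $A$ be $P'$-free. If $\tau<\eta n$ with $\eta\le\eps/2$, delete those lines: the remaining submatrix has both dimensions at least $(1-\eta)n$, is $P'$-free, and still has at least $(\eps/2)n^2$ zeros --- this is the second alternative. If $\tau\ge\eta n$, then a greedy argument produces $m=\Omega_P(n)$ pairwise line-disjoint copies of $P'$ in $A$, and this is where the forbidden $P$ must enter. Writing the deleted line as, say, a row $r_0$ of $P$ with at most one $1$ (the column case being the transpose), each copy $T_s$ of $P'$, occupying rows $R_s$ and columns $C_s$, comes with a row-interval $I_s$ --- the ``slot'' into which $r_0$ would have to be inserted, bounded by two consecutive rows of $R_s$ --- such that \emph{no} row in $I_s$ agrees with the string $r_0$ on the columns $C_s$; since $r_0$ has at most one $1$, this says that a specified short row-pattern is forbidden on the columns $C_s$ throughout $I_s$. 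The goal is to accumulate these $m=\Omega_P(n)$ local prohibitions into a single all-$0$ rectangle of linear size; a natural intermediate move is to pigeonhole the copies into $O_P(1)$ ``types'' according to which block of a fixed coarse partition of the rows and of the columns each line of a copy falls in, keep one type that occurs $\Omega_P(n)$ times, and then work inside the resulting structured configuration --- or, alternatively, to run a regularity / density-increment argument on the bipartite graph of zeros.

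This last step is the crux, and I expect it to be the main obstacle; it is presumably why the statement is left as a conjecture. Two difficulties compound. First, the \emph{ordered} structure: one cannot freely permute rows or columns while assembling a rectangle, and when the elimination line $r_0$ sits in the ``middle'' of $P$ --- which acyclicity does not in general allow us to avoid --- the slot $I_s$ can be empty for many copies, so the local prohibitions may carry no information, and one would need a more global argument producing copies of $P'$ with genuine room in the relevant slot. Second, the $1$-entries of $P$: to embed $P$ and reach a contradiction one needs blocks that are simultaneously very dense in zeros \emph{and} contain the required $1$'s in the right cells, whereas a block whose zero-density is merely $\eps$ need not, in any single row or column, contain enough zeros even to place one row of $P'$, and refining the partition to push the zero-density close to $1$ can fail on a ``balanced'' block. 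Overcoming these --- perhaps via a smarter use of the full tree structure of $P$ to choose the elimination order, or via an auxiliary Marcus--Tardos-type extremal bound for $P$-free matrices feeding into the count of copies of $P'$ --- is where the real work lies; the cases already resolved in this paper (the $2\times k$ and $k\times 2$ patterns, and $Q$) are consistent with this scheme and may be read as its base instances.
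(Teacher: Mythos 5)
The statement you are addressing is labeled a \emph{conjecture} in the paper (\Cref{conj:acyclic}), not a theorem: the authors explicitly state they ``believe'' it and prove it only in special cases. So there is no proof in the paper for your sketch to be measured against, and you were right to flag that the ``crux'' step is unresolved --- it is, for the authors as well.

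With that said, a few remarks on your sketch. The dichotomy on the line-covering number $\tau$ of copies of $P'$ is a reasonable opening move: if $\tau<\eta n$ you indeed land in a $P'$-free submatrix of essentially full size and density, and if $\tau\ge\eta n$ a greedy argument does give $\Omega_P(n)$ pairwise line-disjoint copies of $P'$, since each copy occupies $O_P(1)$ lines and the greedy process cannot stop before $\tau$ lines have been removed. The genuine gap is exactly where you place it, but it is worth being more concrete about why it is a gap. First, the ``slot'' $I_s$ may be empty for \emph{every} copy of $P'$: nothing forces the rows of a copy of $P'$ to straddle the position where the deleted line of $P$ should sit, and when they do not, that copy carries zero information about $P$-freeness. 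Second, even when $I_s$ is nonempty, the constraint it imposes --- every row in $I_s$ restricted to $C_s$ disagrees with the fixed string $r_0$ in at least one coordinate --- is a union of $|C_s|$ possible disagreements, and different copies impose such constraints on overlapping but unaligned blocks; there is no obvious way to intersect $\Omega(n)$ of these into a single linear $\times$ linear all-$0$ rectangle. Your own ``coarse type'' pigeonholing reduces the number of combinatorial types of copy to $O_P(1)$, but within one type you still face the second problem undiminished.

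For comparison, the paper's partial results for this conjecture do \emph{not} follow your line-cover / induction scheme at all, and that is instructive. \Cref{thm:1} (the $2\times k$ no-homogeneous-column case) is proved by a block decomposition and a counting argument on row-pairs containing $P_2$ (\Cref{lem:density,lemma:longmtx}). \Cref{thm:2} (the pattern $Q$) is proved by building an auxiliary graph on the $0$-entries and showing that row-monotone paths propagate $1$'s (\Cref{claim:copy,claim:comp}). \Cref{thm:3} (general simple $2\times k$) translates each inhomogeneous column of $P$ into a comparability graph on the rows and invokes the Fox--Pach bipartite Dilworth theorem (\Cref{thm:poset}), which is exactly why it loses the $n^{o(1)}$ factor. \Cref{thm:perm} (at most one $1$ per column) uses the Alon--Fischer--Newman removal-type lemma to reduce to the very dense or very sparse regime (\Cref{lemma:dense_sparse}) and then a direct counting of identity submatrices. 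None of these builds $P$ up one line at a time; each one exploits a structural feature specific to the restricted class of $P$'s. That is consistent with your assessment that a clean inductive ``engine'' over all acyclic $P$ is missing and would require a new idea.

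One smaller note on your base case: forbidding $(0)$ does not make the statement hold ``because the matrix has a zero''; rather it makes the hypothesis ``at least $\eps n^2$ zeros'' vacuous, so the conjecture holds vacuously for $P=(0)$. This is fine, just a cleaner way to say it.
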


Another immediate corollary of this conjecture would be the following:

\begin{conjecture} \label{conj:acyclicpair}
Let $P$ be an acyclic \zm matrix. Then every $n\times n$ \zm matrix that is both $P$-free and $P^c$-free contains an $cn\times cn$ homogeneous submatrix, for a suitable constant $c>0$.
\end{conjecture}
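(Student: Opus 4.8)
The statement is invariant under replacing $A$ by its complement $A^{c}$, since $A$ contains $P$ if and only if $A^{c}$ contains $P^{c}$; hence $A$ is simultaneously $P$-free and $P^{c}$-free exactly when $A^{c}$ is, and I may assume that $0$ is the majority entry of $A$, that is, that $A$ has at least $n^{2}/2$ zero entries. (One may also assume $P$ is inhomogeneous: if $P$ is homogeneous then, by a short counting argument of Kővári--Sós--Turán type, for large $n$ there is no $n\times n$ matrix that is both $P$-free and $P^{c}$-free, so there is nothing to prove.) After this reduction, a $cn\times cn$ all-$0$ submatrix of $A$ is precisely the conclusion of \Cref{conj:acyclic} applied with $\eps=1/2$, using that $A$ is $P$-free and $P$ is acyclic; so \Cref{conj:acyclicpair} is a formal consequence of \Cref{conj:acyclic}, and the plan is to produce this linear-sized all-$0$ block.

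For small patterns this is already available. If $P$ is a $2\times 2$ matrix it is covered by \Cref{thm:2by2} (or is the all-$0$ pattern, which is vacuous), so a linear homogeneous submatrix follows from $P$-freeness alone; the same holds if, up to rotation and complementation, $P$ is the matrix $Q$ of \Cref{thm:2} or a $2\times k$ matrix with no homogeneous column, by \Cref{thm:1,thm:2}. For a general simple $2\times k$ or $k\times 2$ pattern, \Cref{thm:3} yields only an $n^{1-o(1)}\times n^{1-o(1)}$ homogeneous submatrix, and a natural first target would be to upgrade this to a linear block by genuinely using that $A$ avoids $P^{c}$ as well — at present the $P^{c}$-free hypothesis serves only to fix the majority colour.

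For the general acyclic $P$, the approach I would attempt is an induction on the number of entries of $P$, exploiting that acyclicity means the bipartite graph of $P$ is a forest. Delete a leaf edge of this forest — a $1$-entry that is the unique $1$ in its row or in its column — and set it to $0$; this yields a smaller acyclic pattern $P'$ with an all-$0$ row or column. The hope is a product-type argument: using a weighted, regularity-style partition of the rows and columns of $A$ together with a Zarankiewicz estimate, first locate many large disjoint all-$0$ combinatorial rectangles; then invoke $P$-freeness to forbid a transversal configuration among these rectangles and thereby force a positive fraction of them to line up into a single $cn\times cn$ all-$0$ block, feeding the inductive hypothesis for $P'$ into the alignment step. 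Note that acyclic patterns need not have bounded dimensions — the $k\times k$ identity is acyclic for every $k$ — so the induction must accommodate arbitrarily large $P$.

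The main obstacle is exactly this gluing step, and it is where \Cref{conj:acyclic} itself stands open: $P$-freeness is an \emph{ordered} constraint, so knowing that $A$ contains linearly many disjoint large all-$0$ rectangles does not by itself yield one large all-$0$ rectangle, and understanding how an ordered copy of $P$ can meet an arbitrary ordered family of rectangles is the heart of the difficulty. The extra hypothesis that $A$ is $P^{c}$-free only relieves us of having to guess the majority colour; it does not appear to break this ordered-transversal problem. I therefore expect that any complete proof of \Cref{conj:acyclicpair} will have to resolve \Cref{conj:acyclic}, so that the real crux lies there.
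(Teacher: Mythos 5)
You have correctly recognized that \Cref{conj:acyclicpair} is an open conjecture, not a theorem of the paper: the paper explicitly records it as an immediate corollary of \Cref{conj:acyclic} and offers no proof. Your reduction is exactly the paper's own framing --- pass to the complement so that $0$ is the majority entry, invoke \Cref{conj:acyclic} with $\eps=1/2$, and observe that the $P^c$-free hypothesis is consumed entirely by choosing the majority colour --- and your assessment that the remaining difficulty is the ordered nature of the $P$-free constraint is accurate. Your side remark that a homogeneous $P$ makes the hypothesis vacuous for large $n$ is also correct.

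What you do not mention is that the paper proves \Cref{conj:acyclicpair} for a genuine subclass of acyclic $P$, namely when every column of $P$ has at most one $1$-entry (\Cref{thm:perm}). That proof does not follow your inductive leaf-deletion outline; instead it combines two ingredients. First, \Cref{lemma:epsilon} establishes a near-extremal version of \Cref{conj:acyclic}: for such $P$ there is an $\eps=\eps(P)>0$ so that a $P$-free matrix with at least $(1-\eps)n^2$ zeros already contains an $\eps n\times\eps n$ all-$0$ block, via a counting argument over copies of the identity matrix inside diagonal blocks. Second, the Alon--Fischer--Newman lemma (\Cref{lemma:dense_sparse}) supplies a $\delta n\times\delta n$ submatrix that is either $(1-\eps)$-dense or $(1-\eps)$-sparse in zeros, which reduces the general case to the near-extremal one in $B$ or $B^c$. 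This density-dichotomy strategy completely sidesteps the ``gluing'' and ordered-transversal obstacle you identify; the bottleneck is instead that \Cref{lemma:epsilon} leans heavily on $P$ embedding into a matrix built from copies of $I_k$, so it is unclear how to extend it to all acyclic $P$. Your sketch, as you yourself acknowledge, does not surmount the alignment step and hence does not constitute a proof --- which is the correct conclusion, since the statement is open.
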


We can prove these conjectures in the special case when $P$ has no column with more than one 1-entries.

\begin{theorem}\label{thm:perm}
Let $P$ be a \zm matrix such that every column of $P$ has at most one 1-entry. Then every $n\times n$ \zm matrix that is both $P$-free and $P^c$-free contains a $cn\times cn$ homogeneous submatrix, for some $c>0$.
\end{theorem}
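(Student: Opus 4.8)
Complementing a matrix that is both $P$-free and $P^c$-free yields another such matrix (if $A^c$ contained $P$ then $A$ would contain $P^c$, and conversely), so we may assume that $0$ is the majority entry of $A$. It then suffices to prove the one-sided statement $(\star)$: \emph{every $P$-free $n\times n$ \zm matrix $B$ with at least $n^2/2$ zero-entries contains a $\delta n\times\delta n$ all-$0$ submatrix, for a constant $\delta=\delta(P)>0$.} Indeed, applying $(\star)$ to $A$ when $0$ is the majority, and to $A^c$ (which is $P$-free since $A$ is $P^c$-free, and has at least $n^2/2$ zeros) otherwise, yields the theorem. Note that $P^c$-freeness is used only in this symmetrization, and that the argument below works with any constant fraction of zero-entries in place of $n^2/2$; hence it in fact establishes \Cref{conj:acyclic}, and a fortiori \Cref{conj:acyclicpair}, for patterns with column-degree at most one. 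Moreover, since every column of $P$ has at most one $1$-entry, $P$ is a submatrix of the canonical pattern $P^{\circ}=P^{\circ}_{k,m}$ obtained by concatenating $m$ copies of the $k\times k$ identity block $[\,e_1\mid e_2\mid\cdots\mid e_k\,]$, inserting an all-$0$ column after each block, and inserting all-$0$ rows to accommodate any all-$0$ rows of $P$ (here $k$ and $m$ are the numbers of rows and columns of $P$, and $e_i$ is the $i$th unit column). As $B$ being $P$-free implies $B$ is $P^{\circ}$-free, it suffices to prove $(\star)$ when the forbidden pattern is $P^{\circ}_{k,m}$.

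I would prove this by induction on $k$. For $k=1$ no padding occurs and $P^{\circ}_{1,m}$ is literally the alternating row $1\,0\,1\,0\cdots1\,0$ of length $2m$; a row of $B$ avoiding it as a (not necessarily contiguous) subsequence is a union of at most $2m$ maximal constant runs. Since $B$ has at least $n^2/2$ zeros, at least $n/4$ of its rows contain at least $n/4$ zeros, hence --- being a union of $\le 2m$ runs --- contain a run of at least $n/(8m)$ consecutive zeros. Every interval of that length contains one of $O(m)$ equally spaced marked positions in $[n]$, so by pigeonhole $\Omega(n/m)$ of these rows share a marked position lying inside their all-$0$ run; a further pigeonhole on the side to which the run extends leaves $\Omega(n/m)$ of them with a common all-$0$ interval of $\Omega(n/m)$ columns, which is the required submatrix.

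For the inductive step, let $P^{-}$ be $P^{\circ}_{k,m}$ with its last nonzero row deleted; this is again a canonical pattern, now with $k-1$ nonzero rows, so $(\star)$ holds for $P^{-}$ by induction. If $B$ is itself $P^{-}$-free we are done; so assume $B$ contains a copy of $P^{-}$. Here one exploits $P^{\circ}_{k,m}$-freeness: below any copy of $P^{-}$, no row of $B$ can match, on the $O(m)$ columns of that copy, the $1\times O(m)$ pattern that together with the copy would form a copy of $P^{\circ}_{k,m}$. The main obstacle is to upgrade this local restriction to a linear-sized one. Concretely, one must find $\Omega(n)$ copies of $P^{-}$ that share a common set $C$ of $\Omega(n)$ columns and jointly lie above a common set $T$ of $\Omega(n)$ rows; then $P^{\circ}_{k,m}$-freeness forces every row in $T$ to avoid, on $C$, a fixed $1\times O(m)$ pattern, so each such row is a union of $O(m)$ constant runs on $C$, and the pigeonhole argument of the base case applied inside $T\times C$ produces the all-$0$ submatrix. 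The difficulty is precisely the word ``linear'': a naive family of copies of $P^{-}$ overlaps in only $O(m)$ columns, which is useless, so producing the large common $C$ and $T$ requires an iterated cleaning of $B$ --- pigeonholing to align copies of $P^{-}$ while repeatedly feeding the still-unaligned leftover submatrices back into the induction hypothesis --- and this is where essentially all the work of the proof lies.
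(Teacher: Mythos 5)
Your symmetrization and base case ($k=1$) are sound, but the inductive step is a genuine gap, not a proof. You correctly identify the crux — producing $\Omega(n)$ copies of $P^-$ that share a common $\Omega(n)\times\Omega(n)$ block of rows and columns on which $P^{\circ}_{k,m}$-freeness becomes a row-pattern restriction — and then write ``this is where essentially all the work of the proof lies,'' which is a description of the remaining problem rather than an argument. There is no indication of how the ``iterated cleaning'' stabilizes: each round of pigeonholing to align copies of $P^-$ loses constant factors, and a single copy only constrains a $1\times O(m)$ pattern on a fixed $O(m)$-tuple of columns, which is almost no information at all. The claim that your argument works at \emph{any} constant zero-density and therefore proves \Cref{conj:acyclic} (and \Cref{conj:acyclicpair}) for column-degree-one patterns should itself have been a red flag: that is exactly what the paper leaves open, and the paper does not prove \Cref{conj:acyclic} at constant density even in this special case.

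The paper's proof takes a completely different route that sidesteps the difficulty you hit. It first proves \Cref{lemma:epsilon}, a density statement only at the \emph{near-extremal} level: if a $P$-free $n\times n$ matrix has at least $(1-\eps)n^2$ zeros for some small $\eps=\eps(P)$, it has an $\eps n\times\eps n$ all-$0$ submatrix. The argument replaces $P$ by a concatenation $R$ of $\ell$ copies of the $k\times k$ identity, splits a horizontal strip into $s$ blocks, and counts $k\times k$ identity submatrices in each block; crucially, \Cref{claim:countI} lower-bounds this count by subtracting the contribution of stray $1$-entries, which is only small because there are at most $\eps n^2$ of them. At density $1/2$ that subtraction is worthless, which is precisely why your direct induction is hard. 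The bridge from density $1/2$ to density $1-\eps$ is instead the Alon--Fischer--Newman removal-type result (\Cref{lemma:dense_sparse}): every $P$-free matrix contains a linear-sized submatrix that is either $\eps$-sparse or $\eps$-dense in zeros, after which one applies \Cref{lemma:epsilon} to it or to its complement using $P^c$-freeness. So the trade-off is: you try to prove a much stronger density statement from scratch (and get stuck); the paper proves a much weaker near-extremal one and pays for the reduction by citing a removal lemma.
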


Note that \Cref{thm:1} can also be obtained, with slightly weaker constants, as a corollary of this result (by applying \Cref{thm:perm} to the concatenation of $P$ and $P^c$). The proof can be found in \Cref{sec:perm}.

\section{Notation, preliminaries--Proof of \Cref{prop:nonsimple}} \label{sec:tools}

Throughout this paper, we use the following notation. When $A$ is a matrix, $A(i,j)$ denotes the entry in the $i$'th row and $j$'th column. Sometimes we make no distinction between rows and their indices, and refer to the $i$'th row as ``row $i$'' (and, in a similar manner, for columns). We denote the submatrix in the intersection of rows $X$ and columns $Y$ (the submatrix induced by $X$ and $Y$) by $A[X\times Y]$.

We use two natural correspondences between \zm matrices and graphs. The \emph{biadjacency matrix} of a bipartite graph $G=(A\cup B,E)$ is the \zm matrix whose rows are indexed by $A$, columns are indexed by $B$, and the $(a,b)$ entry is 1 for $a\in A$ and $b\in B$ if and only if $ab\in E$. The \emph{incidence matrix} of a graph $G=(V,E)$ is the \zm matrix whose rows are indexed by $V$, columns are indexed by $E$, and the $(v,e)$ entry is 1 if and only if $e$ is incident to $v$.

For two subsets $X,Y\subs [n]$, we write $X<Y$ to denote that $x<y$ for every $x\in X, y\in Y$. When $Y=\{y\}$, we may simply write $X<y$. We systematically omit floor and ceiling signs whenever they are not essential.
\medskip

We start by proving that only acyclic forbidden matrices can force large all-0 submatrices. This also shows that \Cref{conj:acyclic} can only hold for acyclic $P$.

\begin{prop} \label{prop:cyclicmx}
    Let $P$ be a $k\times \ell$ \zm matrix. If $P$ is not acyclic, then there is a $P$-free $n\times n$ \zm matrix $A$ with at least $n^2/2$ 0-entries, but no homogeneous $n^{1-\eps}\times n^{1-\eps}$ submatrices for every large enough $n$, where $\eps=\eps(P)$ is a positive constant.
\end{prop}
\begin{proof}
We may assume that every row and column of $P$ contains at least two 1-entries, as otherwise we can replace $P$ with a submatrix. In particular, we have $k,\ell\ge 2$, and $P$ contains at least $k+\ell$ 1-entries.

Let $A_0$ be a random $n\times 2n$ matrix, where each entry is independently set to 1 with probability $p=\frac{1}{4}n^{-1+\frac{1}{k+\ell}}$, and set to 0 otherwise. First of all, note that the expected number of 1-entries is $2n^2p = \frac{1}{2}n^{1+\frac{1}{k+\ell}} < n^2/8$ if $n$ is large enough, so the probability that $A_0$ has more than $n^2/2$ 1-entries is at most $1/4$. Also, the expected number of submatrices identical to $P$ in $A_0$ is at most $\binom{n}{k}\binom{2n}{\ell}p^{k+\ell}<(2np)^{k+\ell} < n/4$. So with probability at least $1/4$, there are at most $n$ such submatrices. Finally, the probability that $A_0$ contains a homogeneous $m\times m$ matrix is at most
\[ \binom{n}{m}\binom{2n}{m} \left(p^{m^2} + (1-p)^{m^2}\right) \le (2n)^{2m}(1-p)^{m^2} \le e^{4m\log n - pm^2}<1/4 \]
if $4\log n-pm<-2$, which holds for $m=n^{1-\eps}$ whenever $\eps<\frac{1}{k+\ell}$ and $n$ is large enough. So there is an $n\times 2n$ matrix that contains at most $n$ submatrices identical to $P$ and no homogeneous $m\times m$ submatrix. Then we can delete $n$ columns to obtain the $P$-free matrix $A$ we were looking for.
\end{proof}

\begin{proof}[Proof of \Cref{prop:nonsimple}]
Let us apply \Cref{prop:cyclicmx} to $P$ or $P^c$ (whichever is not acyclic) to get $A$ with no homogeneous $n^{1-\eps}\times n^{1-\eps}$ submatrix. Then $A$ or $A^c$ (whichever is $P$-free) will work.
\end{proof}

\begin{definition} We say that a \zm matrix $P$ is \emph{$(\eps,\delta)$-good} if for all $n$, every $n\times n$ $P$-free matrix with at least $\eps n^2$ 0-entries contains a $\delta n\times \delta n$ all-0 submatrix.
\end{definition}

By convention, every matrix contains the $0\times 0$ all-0 submatrix, so every $P$ is $(\eps,0)$-good for every $\eps$.
We prove our main results by showing that certain matrices $P$ are  $(\eps,\delta)$-good for some $\delta>0$. Let us start with a simple case.

\begin{prop} \label{prop:all1row}
The all-1 $1\times k$ matrix $P=\begin{pmatrix}1&\cdots&1\end{pmatrix}$ is $(0,1/k)$-good.
\end{prop}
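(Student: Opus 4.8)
The plan is to unwind what $P$-freeness means for this particular $P$. Since $P=\begin{pmatrix}1&\cdots&1\end{pmatrix}$ is a single row of $k$ ones, a \zm matrix $A$ is $P$-free precisely when no row of $A$ contains $k$ ones in any choice of $k$ columns; equivalently, every row of $A$ has at most $k-1$ entries equal to $1$. So the hypothesis ``$P$-free'' gives us a uniform per-row bound on the number of $1$-entries, and the $\eps=0$ in the statement means there is no extra assumption to use — the row bound alone must suffice.

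Next I would fix an arbitrary set $X$ of $\lfloor n/k\rfloor$ rows. For each row $i\in X$, the set of columns $j$ with $A(i,j)=1$ has size at most $k-1$, so the union over $i\in X$ of these ``bad'' columns has size at most $(k-1)\lfloor n/k\rfloor\le (k-1)n/k$. Let $Y$ be the complementary set of columns, i.e.\ those $j$ with $A(i,j)=0$ for every $i\in X$. Then
\[
|Y|\ \ge\ n-(k-1)\frac nk\ =\ \frac nk\ \ge\ \left\lfloor \frac nk\right\rfloor .
\]
Pick any $\lfloor n/k\rfloor$ columns $Y'\subs Y$. By construction $A[X\times Y']$ is all-$0$ and has dimensions $\lfloor n/k\rfloor\times\lfloor n/k\rfloor$, which establishes that $P$ is $(0,1/k)$-good (with floors suppressed as in our convention).

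There is essentially no obstacle here: the argument is a one-line averaging/pigeonhole computation, and the only point requiring a moment's care is checking that the arithmetic $n-(k-1)\tfrac nk=\tfrac nk$ leaves exactly enough room, so that the bound $\delta=1/k$ is tight and cannot be improved by this method. I would present the proof essentially as above, possibly noting explicitly that the choice of $X$ is completely free, which is why the conclusion holds for \emph{every} such matrix.
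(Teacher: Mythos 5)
Your proof is correct and follows essentially the same argument as the paper: fix any $n/k$ rows, note that $P$-freeness bounds each row by $k-1$ ones, so at most $(k-1)n/k$ columns are touched, leaving at least $n/k$ all-zero columns on those rows. Nothing to add.
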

\begin{proof}
Without assuming anything about the density, we can find an $\frac{n}{k}\times \frac{n}{k}$ all-0 matrix in any $\frac{n}{k}$ rows of an $n\times n$ $P$-free matrix. Indeed, as every row contains at most $k-1$ 1-entries, any $\frac{n}{k}$ rows induce at least $n-\frac{n(k-1)}{k}= \frac{n}{k}$ columns with only 0-entries.
\end{proof}

Of course, if a matrix is $(\eps,\delta)$-good, then it is also $(\eps',\delta)$-good for any $\eps'\ge \eps$. The next lemma shows that adding an all-0 row or column at a border of a matrix does not change goodness.

\begin{lemma} \label{lem:homcolumn}
Let $P$ be a $k\times\ell$ \zm matrix, and let $P'$ be the $k\times (\ell+1)$ matrix obtained from $P$ by appending a new last column of 0-entries. If $P$ is $(\eps,\delta)$-good for some $\eps\ge 0$, then $P'$ is $(2\eps,\delta\eps)$-good.
\end{lemma}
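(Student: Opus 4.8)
The plan is to exploit the one structural consequence of $P'$-freeness: if $R$ is a set of rows of $A$ and $c$ a column such that $A[R\times\{c\}]$ is all-$0$, then $A[R\times\{1,\dots,c-1\}]$ must be $P$-free, since a copy of $P$ inside this prefix submatrix could be extended by column $c$ (which lies to the right of columns $1,\dots,c-1$) into a copy of $P'$ in $A$. So, writing $m=\eps n$, the aim is to locate such a pair $(R,c)$ with the additional properties $|R|\ge m$, $c-1\ge m$, and $A[R\times\{1,\dots,c-1\}]$ of $0$-density at least $\eps$; then, passing to an $m\times m$ $P$-free submatrix of density still at least $\eps$ and applying the $(\eps,\delta)$-goodness of $P$ with parameter $m$, we obtain a $\delta m\times\delta m=\delta\eps n\times\delta\eps n$ all-$0$ submatrix, which is exactly what $(2\eps,\delta\eps)$-goodness of $P'$ demands. (If $2\eps>1$ the statement is vacuous, so one may assume $\eps\le 1/2$.)

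To find the pair $(R,c)$ I would run a counting argument on the $0$-entries of $A$. Call a $0$-entry $(i,j)$ \emph{early} if it is among the first $m$ zeros of row $i$ in column order, and \emph{late} otherwise. Each row has at most $m$ early zeros, so $A$ has at most $mn=\eps n^2$ early zeros; since $A$ has at least $2\eps n^2$ zeros in all, at least $\eps n^2$ of them are late. For each column $c$ set
\[
L_c=\{\, i:\ A(i,c)=0 \text{ and row } i \text{ has at least } m \text{ zeros in columns } 1,\dots,c-1 \,\}.
\]
A $0$-entry $(i,c)$ is late precisely when $i\in L_c$, so $\sum_c|L_c|$ equals the number of late zeros and is therefore at least $\eps n^2$. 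As there are only $n$ columns, some column $c^*$ satisfies $|L_{c^*}|\ge m$; moreover any $i\in L_{c^*}$ has at least $m$ zeros in columns $1,\dots,c^*-1$, which forces $c^*-1\ge m$.

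Next I would fix any set $R$ of exactly $m$ rows contained in $L_{c^*}$. Since each row of $R$ lies in $L_{c^*}$, column $c^*$ is all-$0$ on $R$, so $M:=A[R\times\{1,\dots,c^*-1\}]$ is $P$-free by the opening observation; and, again by the definition of $L_{c^*}$, every one of the $m$ rows of $M$ has at least $m$ zeros, so $M$ contains at least $m^2$ zeros distributed over its $c^*-1\le n$ columns. Keeping only the $m$ columns of $M$ with the most $0$-entries leaves an $m\times m$ $P$-free submatrix with at least $m\cdot\frac{m^2}{n}=\eps m^2$ zeros, i.e.\ of $0$-density at least $\eps$. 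Applying the $(\eps,\delta)$-goodness of $P$ to this matrix then produces a $\delta m\times\delta m=\delta\eps n\times\delta\eps n$ all-$0$ submatrix of $A$, completing the argument.

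The one genuine obstacle is keeping the $0$-density at $\ge\eps$ after restricting to rows on which some column vanishes: a carelessly chosen column might have all of its zero-rows be $1$-heavy elsewhere, so that the induced prefix matrix is almost all $1$s and carries no usable density. The early/late dichotomy is designed precisely to defeat this — by construction every surviving row donates at least $m=\eps n$ zeros to the $\le n$ columns of the prefix matrix, which is exactly the bookkeeping needed to clear the threshold $\eps$ after we keep only the $m$ densest columns. The replacements of $\eps n$ and $m$ by integers are immaterial and can be absorbed into the constants in the usual way.
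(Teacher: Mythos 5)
Your proof is correct and is essentially the paper's own argument: your early/late dichotomy is exactly the paper's auxiliary matrix $A'$ obtained by overwriting the first $\eps n$ zeros of each row with 1s, your set $L_{c^*}$ is its heavy column, and the subsequent steps (prefix submatrix made $P$-free by the all-0 column on the chosen rows, keeping the $\eps n$ densest columns to retain density $\eps$, then invoking $(\eps,\delta)$-goodness) coincide with the paper's. No differences worth noting beyond presentation.
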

\begin{proof}
Let $A$ be a $P'$-free $n\times n$ matrix with at least $2\eps n^2$ 0-entries. We will find a dense submatrix with an all-0 last column, and then apply the goodness property of $P$ to get the large homogeneous submatrix.

Define $A'$ as the matrix obtained from $A$ by replacing the first $\eps n$ 0-entries of each row by 1-entries (if a row has fewer than $\eps n$ 0-entries, then it becomes a row with all 1's). Then $A'$ has at least $\eps n^2$ 0-entries, so it must contain a column with at least $\eps n$ 0-entries. If column $j$ is such a column, let $I$ be a set of $\eps n$ rows with a 0-entry in the $j$'th column, and let $J_0$ be the first $j-1$ columns. By the definition of $A'$, every row of $B_0=A[I\times J_0]$ has at least $\eps n$ 0-entries, so in total, $B_0$ contains at least $\eps^2 n^2$ 0-entries. Now let $J\subs J_0$ be the $\eps n$ columns with the most 0-entries in them. Then $B=A[I\times J]$ is an $\eps n \times \eps n$ matrix with at least $\eps^3 n^2$ 0-entries.

Note that $B$ is $P$-free, since we could otherwise add 0's in the $j$'th column to get a copy of $P'$ in $A$. As $P$ is $(\eps,\delta)$-good, $B$ must contain a $\delta\eps n \times \delta\eps n$ all-0 submatrix.
\end{proof}

\section{Matrices with no homogeneous columns--Proof of \Cref{thm:1}}\label{sec:chessboard}

In this section, we prove \Cref{thm:1} by showing that every $2\times k$ matrix with no homogeneous columns satisfies \Cref{conj:acyclic}. We first prove this for a special class of ``checkerboard'' matrices. Let $P_{k}$ denote the $2\times k$ matrix defined by $P_{k}(i,j)=1$ if $i+j$ is even, and $P_{k}(i,j)=0$ otherwise. The main concern of this section is to establish that for every $\eps>0$, there is a $\delta>0$ such that $P_{2k}$ is $(\eps,\delta)$-good. The general case will follow easily by observing that every $2\times k$ matrix with no homogeneous columns is a submatrix of $P_{2k}$.

Note that $P_{2k}$ is the concatenation of $k$ copies of $P_2=\begin{pmatrix}1&0\\0&1\end{pmatrix}$. We first consider $P_2$-free families.

\begin{lemma}\label{lem:density}
Let $\eps>0$, and suppose that $A$ is an $n\times n$  \zm matrix with at least $\eps n^2$ 0-entries. Then at least one of the following statements holds.
\begin{enumerate}
 \item \label{item:density1} $A$ contains an $\frac{\eps n}{8}\times \frac{\eps n}{8}$ all-0 submatrix.
 \item \label{item:density2} At least $\frac{\eps^2 n^2}{64}$ different pairs of rows of $A$ contain $P_2$ as a submatrix. 
\end{enumerate}
\end{lemma}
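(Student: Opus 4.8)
Throughout write $d=\tfrac{\eps n}{8}$, so the goal is to produce either a $d\times d$ all-$0$ submatrix or at least $d^2$ pairs of rows that contain $P_2$; I would assume the first fails and chase the second. For a row $i$ let $Z_i\subs[n]$ be the set of columns where row $i$ has a $0$. The starting point is a description of when a single pair fails to contain $P_2$: for $i<i'$, the pair $\{i,i'\}$ does \emph{not} contain $P_2=\left(\begin{smallmatrix}1&0\\0&1\end{smallmatrix}\right)$ exactly when there is a threshold $t\in\{0,\dots,n\}$ with $Z_{i'}\cap[1,t]\subs Z_i$ and $Z_i\cap(t,n]\subs Z_{i'}$ — informally, reading left to right the two rows are nested one way and then the other. (This is just the statement that in a $P_2$-free pair every $\binom{0}{1}$-mismatch column precedes every $\binom{1}{0}$-mismatch column.)

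I would then fix one \emph{global} threshold $t^{\ast}$ chosen so that each of the column blocks $A_L=A[[n]\times[1,t^{\ast}]]$ and $A_R=A[[n]\times(t^{\ast},n]]$ carries at least $\eps n^2/2$ zeros, and apply the description with $t=t^{\ast}$: any pair avoiding $P_2$ is \emph{left-nested} ($Z_{i'}\cap[1,t^{\ast}]\subs Z_i$) or \emph{right-nested} ($Z_i\cap(t^{\ast},n]\subs Z_{i'}$). Hence the number of $P_2$-pairs is at least $\binom n2-B_L-B_R$, where $B_L,B_R$ count left- and right-nested pairs, and it suffices to bound, say, $B_L\le\tfrac12\binom n2-\tfrac{\eps^2n^2}{128}$ (and symmetrically $B_R$). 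Call a row \emph{left-heavy} if it has at least $\eps n/4\ge d$ zeros in $A_L$. If $i'$ is left-heavy and $\{i,i'\}$ is left-nested then $i$ is left-heavy too and the $\ge d$ columns of $Z_{i'}\cap[1,t^{\ast}]$ all lie in $Z_i$; so for each left-heavy $i'$ fewer than $d$ rows $i<i'$ can be left-nested with it, since otherwise those $\ge d$ rows together with $d$ of those columns form a $d\times d$ all-$0$ submatrix. This bounds the heavy–heavy part of $B_L$ by $nd$. It remains to control the left-nested pairs involving a left-light row, and here I would play the density of $A_L$ against the no-all-$0$ assumption: left-light rows carry few zeros, so if there were many of them the left-heavy rows would carry so large a share of the $\ge\eps n^2/2$ zeros of $A_L$ that a suitable subfamily of them would already span a $d\times d$ all-$0$ submatrix; making this trade-off quantitative should push the remaining contribution below the threshold.

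The step I expect to be the real obstacle is precisely this last one — extracting a large all-$0$ submatrix (or a good bound) from ``many nested pairs in a dense block.'' The naive hope ``many comparable pairs $\Rightarrow$ a long chain'' fails, since a partial order can have quadratically many comparable pairs with all chains of constant length, so one cannot just read off a nested chain of rows; the density hypothesis and the absence of a $d\times d$ all-$0$ submatrix have to be used together, with a case analysis on the width of $A_L$, the number of left-light rows, and the possibility that the left-heavy rows already yield the first conclusion. It may in fact be cleaner to replace the single global threshold by an iterative extraction on the heavy columns — peeling off columns whose zero-sets nearly contain the current candidate row-set, and arguing the process either builds the all-$0$ submatrix or leaves a configuration whose ``escaping'' zeros, correctly interleaved with the peeled columns' zeros in both the row- and column-order, generate the required $P_2$-pairs on distinct row-pairs. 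Either way the delicate point is the same, and throughout one must remember that $P_2$ is an \emph{ordered} pattern: the threshold is taken on the columns, and which of the two nesting directions survives across it is dictated by which row lies on top.
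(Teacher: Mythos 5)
Your characterization of $P_2$-free row pairs via a threshold $t$ is correct, and so is the observation that any such pair is left- or right-nested relative to a fixed global $t^{\ast}$. But the plan stalls exactly where you say it does, and the stall is not a technicality that could be patched with more effort in the same framework: the target bound $B_L\le\tfrac12\binom n2-\tfrac{\eps^2n^2}{128}$ is simply not available. The estimate $nd$ you prove applies only to pairs whose \emph{later} row $i'$ is left-heavy; a row $i'$ with few (or no) zeros in $A_L$ is left-nested with essentially every earlier row, since $Z_{i'}\cap[1,t^{\ast}]\subs Z_i$ becomes a near-vacuous condition. For small $\eps$ there can be $(1-o(1))n$ left-light rows, so $B_L$ alone can be close to $\binom n2$, and the heavy--light and light--light contributions you postpone are where almost all of $B_L$ lives. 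You correctly observe that ``many comparable pairs'' does not yield a long chain, so no soft density/poset argument finishes; and the iterative peeling alternative you gesture at is only a sketch. As written, the proposal has a genuine gap.

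The paper takes a structurally different route that avoids nesting/poset accounting entirely. With $t=\eps n/8$, it ``normalizes'' $A$: first replace the first $2t$ zeros of every row and column by 1s (giving $A'$, still with $\ge\eps n^2/2$ zeros), then replace the last $2t-1$ zeros of every row and column of $A'$ (giving $A''$, still with $\ge 2n$ zeros). Any surviving zero $(i_1,j_1)$ of $A''$ therefore has $2t$-long all-zero arms in $A'$ going right along its row and down its column (call the resulting index sets $I,J$), and every $i\in I$ has a further set $Y_i$ of $2t$ columns to the left of $J$ where row $i$ is zero in $A$, and every $j\in J$ has a set $X_j$ of $2t$ rows above $I$ where column $j$ is zero in $A$. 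If $A[I\times J]$ has $t$ rows with no 1-entry, case (1) holds; otherwise pick a 1-entry $(i,j)$ in each of $t$ rows and examine $A[X_j\times Y_i]$: either it has $t$ zero rows (case (1) again), or it has 1-entries $(x,y)$ in $t$ distinct rows, and each such $(x,y)$ together with $(i,j)$ produces $P_2$ on the ordered pair $\{x,i\}$ because $A(x,j)=A(i,y)=0$ by construction. Since $x<I\le i$, the max of each pair is $i$, so the $t^2$ pairs produced across the $t$ choices of $i$ are distinct, giving case (2). The key move is therefore local and structural --- every surviving zero is wrapped in a guaranteed cross of zeros so that a nearby 1 immediately yields many $P_2$s --- rather than a global count over a nesting order on the rows.
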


\begin{proof}
Let $t=\frac{\eps n}{8}$. First, we find a $2t\times 2t$ submatrix of $A$ such that its first row and column contain only 0's, moreover, each of these 0-entries is preceded by $2t$ other 0-entries in their rows and columns in $A$.

Let $A'$ be the matrix obtained from $A$ by replacing the first $2t$ 0-entries of each row and column with 1-entries. As at most $4tn$ 0-entries are lost, $A'$ still has at least $\frac{\eps n^2}{2}$ 0-entries. Now let $A''$ be the matrix obtained from $A'$ by replacing the last $2t-1$ 0-entries of each row and column with 1-entries. By the same argument, $A''$ has at least $2n$ 0-entries.

Take a 0-entry in $A''$, say in the $i_1$'th row and $j_1$'th column. By the definition of $A''$, we must have a set $J>j_1$ of $2t-1$ columns such that the $i_1$'th row of $A'$ contains a 0 in these columns, and similarly, there we must have a set $I>i_1$ of $2t-1$ rows such that the $j_1$'th column of $A'$ contains a 0 in these rows. So, the submatrix $A'[(\{i_1\}\cup I)\times (\{j_1\}\cup J)]$ is all-0 in its first row and column. Also, by the definition of $A'$, each row $i\in I$ contains $2t$ 0-entries in $A$ in some columns $Y_i$ preceding the columns of $J$, and similarly, each column $j\in J$ contains $2t$ 0-entries in some rows $X_j<I$.

If $A[I\times J]$ has $t$ rows without a 1-entry, then it contains a $t\times t$ all-0 submatrix, establishing \ref{item:density1}. Hence, we may assume that at least $t$ rows in $A[I\times J]$ contain a 1-entry.

Let $i\in I, j\in J$ be such that $A(i,j)=1$, and look at the $2t\times 2t$ submatrix $A[X_i\times Y_j]$. Again, if this has $t$ rows without a 1-entry, then $A$ contains a $t\times t$ all-0 submatrix, and we are done. Otherwise, there are 1-entries in $t$ different rows of $A[X_i\times Y_j]$. However, if for some $x\in X_i, y\in Y_i$, the entry $A(x,y)$ is 1, then $A[\{x,i\}\times\{y,j\}]=\begin{pmatrix}1&0\\0&1\end{pmatrix}$. For any choice of $(i,j)$, there is such an $(x,y)$ in $t$ different rows, so we find $P_2$ in at least $t^2$ different row pairs, establishing \ref{item:density2}.
\end{proof}

\begin{lemma}\label{lemma:longmtx}
For every $\eps>0$, $P_{2k}$ is $(\eps,\frac{\eps^4}{10^4k})$-good.
\end{lemma}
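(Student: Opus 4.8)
The plan is to iterate Lemma~\ref{lem:density}. Recall $P_{2k}$ is the concatenation of $k$ copies of $P_2 = \begin{pmatrix}1&0\\0&1\end{pmatrix}$. The key observation is that if a pair of rows $\{a,b\}$ of a matrix $A$ contains $P_2$ as a submatrix in each of $k$ disjoint blocks of columns, then $\{a,b\}$ together with those columns contains $P_{2k}$. So if $A$ is $P_{2k}$-free, then for every pair of rows, the columns witnessing a $P_2$ in that pair cannot be split into $k$ disjoint ``$P_2$-rich'' pieces — more precisely, one cannot find $k$ pairwise disjoint column-pairs each inducing a $P_2$ in that row-pair.

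First I would set up the iteration. Let $A$ be an $n\times n$ $P_{2k}$-free matrix with at least $\eps n^2$ zero-entries. Apply Lemma~\ref{lem:density}: either we immediately get an $\frac{\eps n}{8}\times\frac{\eps n}{8}$ all-0 submatrix (and we are done, since $\frac{\eps}{8} \ge \frac{\eps^4}{10^4 k}$), or at least $\frac{\eps^2 n^2}{64}$ pairs of rows contain $P_2$. In the latter case, by averaging there is a single column-pair... — actually, the cleaner route: build an auxiliary structure recording, for each row-pair, a column-pair inducing $P_2$; by a pigeonhole/Kővári–Sós–Turán-type or greedy argument on $P_2$-containment we extract a sub-configuration of rows and columns where $P_2$ appears very densely, then recurse to find the disjoint blocks. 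The point is that each successful application of Lemma~\ref{lem:density} either terminates with a large all-0 submatrix or produces one more disjoint block of columns in which a large set of row-pairs contains $P_2$; after $k$ rounds we would have assembled $P_{2k}$, contradicting $P_{2k}$-freeness. Tracking the density carefully, each round costs a polynomial factor in $\eps$, and after $k$ rounds the surviving row and column sets have size $\Omega\!\big(\tfrac{\eps^4}{10^4 k}\, n\big)$, which must then be all-0 (else Lemma~\ref{lem:density} would fire one more time and complete the $k$-th block).

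The main obstacle I expect is bookkeeping the dependency on $k$ correctly: naively recursing $k$ times would give an $\eps^{\Theta(k)}$ loss, which is far worse than the claimed $\eps^4/(10^4 k)$. The trick must be that we do not recurse on the density $\eps$ itself but keep the row-pair count additive: we want to maintain a large set $R$ of rows and a growing collection of disjoint column-blocks $J_1,\dots,J_i$ such that \emph{many} pairs within $R$ contain $P_2$ in \emph{each} $J_t$ simultaneously. Using that $\binom{|R|}{2}$ pairs get only an $\eps^2/64$ fraction killed per block (via Lemma~\ref{lem:density} applied to $A[R\times(\text{remaining columns})]$, whose 0-density stays $\Omega(\eps)$ after removing previously-used blocks, provided we reserve a $(1-\tfrac1k)$-fraction of columns), we can afford $k$ blocks while keeping a constant fraction of row-pairs alive. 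So the real content is a clean invariant: after $i$ steps, $|R|\ge (1-\tfrac{i}{k})\cdot(\text{something})$ or the $P_2$-pair-count exceeds a threshold forcing continuation, and at the end the leftover $\Omega(\eps^4 n/(10^4 k))\times\Omega(\eps^4 n/(10^4 k))$ block is forced to be all-0. I would organize the write-up around stating this invariant precisely, then verifying the one-step update using Lemma~\ref{lem:density}, and finally reading off the constant $\eps^4/(10^4 k)$ from the two factors of $\eps^2/8$-type losses (one from extracting the dense $P_2$-block, one from the final density threshold) and the $1/k$ from reserving columns for the $k$ blocks.
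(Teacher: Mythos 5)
Your high-level observation is right and matches the paper's starting point: if a single pair of rows contains $P_2$ in each of $k$ disjoint column intervals, then that pair and those columns form $P_{2k}$. But the route you sketch to locate such a pair has a gap that you yourself half-notice and do not resolve.

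The gap is in the proposed invariant. You want to maintain a row set $R$ and disjoint column blocks $J_1,\dots,J_i$ such that ``many pairs in $R$ contain $P_2$ in \emph{each} $J_t$ \emph{simultaneously}.'' That is an intersection over blocks, and Lemma~\ref{lem:density} only guarantees that a $\ge \eps^2/64$ fraction of pairs in a given block contain $P_2$. So after $i$ blocks the simultaneous survivors could be as small as a $(\eps^2/64)^i$ fraction, which is precisely the $\eps^{\Theta(k)}$ blow-up you are trying to avoid. Saying ``$\binom{|R|}{2}$ pairs get only an $\eps^2/64$ fraction killed per block'' inverts the quantifier: Lemma~\ref{lem:density} does not say that at most an $\eps^2/64$ fraction is killed, it says that at least an $\eps^2/64$ fraction survives. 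These are very different, and no reservation of a $(1-1/k)$-fraction of columns fixes it, because the loss is multiplicative over the blocks, not over the columns.

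What actually makes the bound $\eps^4/(10^4k)$ come out is a single non-iterative averaging step, not a maintained invariant. Partition $A$ into an $s\times s$ grid of $\frac{n}{s}\times\frac{n}{s}$ blocks with $s=400k/\eps^3$, find one block-row $i_0$ that contains $t\ge \eps s/2$ heavy blocks, and apply Lemma~\ref{lem:density} to each such heavy block. Either some block already yields an $\frac{\eps n}{8s}\times\frac{\eps n}{8s}$ all-0 submatrix, or each of those $t$ blocks contributes $\ge (\eps n/2s)^2/64$ distinct row-pairs. The blocks all live in the same set of $n/s$ rows, and $t\cdot(\eps n/2s)^2/64 > k\binom{n/s}{2}$ by the choice of $s$, so by pigeonhole one row-pair is hit by at least $k$ of the blocks, and since these blocks occupy disjoint column intervals that pair carries $P_{2k}$, a contradiction. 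The count being compared is the total number of (pair, block) incidences against $k$ times the number of pairs; this is averaging, and is strictly weaker than demanding a common surviving pair across all steps of an iteration. To salvage your write-up you would need to replace the ``simultaneous survival'' invariant with this one-shot pigeonhole on incidences, after which the iteration disappears entirely.

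Minor additional point: when you recurse you would also need a version of Lemma~\ref{lem:density} for rectangular matrices or a way to equalize the row and column counts before each application, since the lemma as stated is for $n\times n$ matrices; the paper's block decomposition sidesteps this by only ever applying the lemma to square $\frac{n}{s}\times\frac{n}{s}$ blocks.
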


\begin{proof}
Suppose $A$ is a $P_{2k}$-free $n\times n$ \zm matrix. Let $s=400k/\eps^3$, and divide $A$ into $\frac{n}{s}\times \frac{n}{s}$ blocks $A_{i,j}=A[I_i\times I_j]$, where $I_k$ is the interval $[\frac{(k-1)n}{s}+1,\frac{kn}{s}]$, for every $i,j,k\in[s]$. We say that $(i,j)\in [s]^{2}$ is \emph{heavy} if $A_{i,j}$ contains at least $\frac{\eps n^2}{2s^2}$ 0-entries. If $N$ denotes the number of heavy pairs, then we can bound the number of 0-entries in $A$ as follows:
\[ \eps n^2 \le N\cdot \frac{n^2}{s^2}+s^2 \cdot \frac{\eps n^2}{2s^2}. \]
Consequently, $N\ge \eps s^2/2$.

This means that for some $i_0\in [s]$, there is a set $J\subs [s]$ of at least $t=\eps s/2$ indices such that $(i_0,j)$ is heavy for every $j\in J$.
Let $R_{j}$ be the set of pairs $\{r,q\}\in [n/s]^{(2)}$ such that rows $r$ and $q$ in $A_{i_{0},j}$ together contain $P_2$. If $(i_{0},j)$ is heavy, then by \Cref{lem:density} (applied with parameters $\eps/2$ and $n/s$), either $|R_{j}|\ge \frac{(\eps n/2s)^{2}}{64}$, or $A_{i_{0},j}$ contains an $\frac{\eps n}{16s}\times \frac{\eps n}{16s}$ all-0 submatrix. In the latter case, we are done, so we may assume the former holds for every $j\in J$. Now
\[ t \cdot \frac{(\eps n/2s)^{2}}{64} = \frac{\eps^3 s}{256}\cdot \frac{(n/s)^2}{2} > k\binom{n/s}{2} \]
implies that some pair $\{r,q\}$ is contained in at least $k$ of the sets $R_j$, say in $R_{j_{1}},\dots,R_{j_{k}}$. Then $P_{2k}$ is a submatrix of the union of the matrices $A_{i_0,j_{1}},\dots,A_{i_0,j_{k}}$ in the rows indexed by $r$ and $q$, which is a contradiction.
\end{proof}

\begin{proof}[Proof of \Cref{thm:1}]
Every $2\times k$ matrix $P$ with no homogeneous columns is contained in $P_{2k}$, so if a matrix is $P$-free, then it is also $P_{2k}$-free.\footnote{Note that this observation combined with \Cref{lemma:longmtx} also implies that every such $P$ is $(\eps,\frac{\eps^4}{10^4k})$-good.} Similarly, every $P^c$-free matrix is $P_{2k}$-free, because $P^c$ also has no homogeneous columns.

If $A$ is $P$-free, then $A^c$ is $P^c$-free, so both $A$ and $A^c$ are $P_{2k}$-free. One of $A$ and $A^c$ will contain at least $n^2/2$ 0-entries, so we can apply \Cref{lemma:longmtx} with $\eps=1/2$ to find an $\frac{n}{20^4k}\times \frac{n}{20^4k}$ homogeneous submatrix in $A$.
\end{proof}

Let $f_{k}(\eps)=\sup\{\delta: P_{2k}\mbox{ is }(\eps,\delta)\mbox{-good}\}$, that is, $f_{k}(\eps)$ is the largest $\delta$ such that for every $n$, every $n\times n$ $P_{2k}$-free matrix with $\eps n^{2}$ 0-entries contains a $\delta n\times \delta n$ all-$0$ matrix. One might wonder what the order of $f_{k}(\eps)$ is.  \Cref{lem:density} shows that $f_{1}(\eps)=\Theta(\eps)$ (the upper bound $f_{1}(\eps)\le \eps$ is trivial), while \Cref{lemma:longmtx} implies $f_{k}(\eps)=\Omega(\eps^4)$ for $k\ge 2$. It might seem reasonable to conjecture that $f_{k}(\eps)=\Theta(\eps)$ also holds for $k\geq 2$. However, this is not true, already for $k=2$:  F\"uredi and Hajnal \cite{FH} proved that for every positive integer $m$, there is an $m\times m$ matrix $B$ such that $B$ does not contain either of $\begin{pmatrix}0&0\\0&0\end{pmatrix}$ and $\begin{pmatrix}*&0&*&0\\ 0&*&0&*\end{pmatrix}$ as a submatrix (where $*$ can be either $0$ or $1$), but $B$ contains $\Omega(m\alpha(m))$ 0-entries, where $\alpha(m)$ is the slowly growing inverse Ackermann function. For $\eps=\Omega(\alpha(m)/m)$ and every $n>m$, we can construct the $n\times n$ matrix $A$ by replacing each $1$-entry of $B$ with an $\frac{n}{m}\times\frac{n}{m}$ all-1 matrix, and each $0$-entry of $B$ with an $\frac{n}{m}\times\frac{n}{m}$ all-0 matrix. Then $A$ is $P_4$-free, it has at least $\eps n^2$ 0-entries, but it does not contain any all-0 submatrix with more than $\frac{n}{m}$ rows and columns. As $\frac{1}{m}=O(\frac{\eps}{\alpha(1/\eps)})$, we have $f_{2}(\eps)=O(\frac{\eps}{\alpha(1/\eps)})$.

It would be interesting to determine the true order of magnitude of $f_{2}(\eps)$. We believe the answer should be closer to the upper bound $O(\frac{\eps}{\alpha(1/\eps)})$.

\section{The $2\times 2$ matrix with one 1 in the corner--Proof of \Cref{thm:2}}\label{sec:corner}

In this section, we establish \Cref{thm:2}. As before, we achieve this by showing a density result: we prove that both $Q= \begin{pmatrix}1&0\\0&0\end{pmatrix}$ and its complement satisfy \Cref{conj:acyclic}.

More generally, let $Q_k$ be the the $2\times (k+1)$ matrix such that $Q_{k}(1,i)=1$ for $i=1,\dots,k$, and all other entries are 0. For example, $Q=Q_1$, and $Q_{3}=\begin{pmatrix}1&1&1&0\\0&0&0&0\end{pmatrix}$. \Cref{prop:all1row} and \Cref{lem:homcolumn} easily imply that $Q_k$ is $(\eps,\eps^2/k)$-good for every $\eps$. In this case, we can actually gain a factor of $\eps$:

\begin{lemma}\label{lemma:1cornereasy}
$Q_k$ is $(\eps,\frac{\eps}{2k})$-good for every $\eps\ge 0$.
\end{lemma}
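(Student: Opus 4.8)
The claim is that $Q_k$, the $2\times(k+1)$ matrix with $k$ ones in the top-left block and zeros everywhere else, is $(\eps,\frac{\eps}{2k})$-good. So I am given an $n\times n$ $Q_k$-free matrix $A$ with at least $\eps n^2$ zero-entries, and I must produce a $\frac{\eps n}{2k}\times\frac{\eps n}{2k}$ all-zero submatrix. Let me think about what $Q_k$-freeness says structurally. A copy of $Q_k$ consists of two rows $r_1$ (the "top" row) and $r_2$ (the "bottom" row) with $r_1 < r_2$, and $k+1$ columns $c_1 < \dots < c_k < c_{k+1}$ such that $A(r_1,c_i)=1$ for $i\le k$ and $A(r_2,c_i)=0$ for $i\le k$, while $A(r_1,c_{k+1})=A(r_2,c_{k+1})=0$. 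So forbidding $Q_k$ means: whenever a row $r_2$ has a zero in some column $c_{k+1}$, then for every row $r_1 < r_2$, among the columns $c < c_{k+1}$ where $r_2$ has a zero, row $r_1$ has at most $k-1$ ones.

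**The plan.** I would first find, as in the proof of \Cref{lem:homcolumn}, a single column that is "zero-rich" after trimming away leading zeros. Concretely, form $A'$ by turning the first $\eps n /$ (something) zeros in each row into ones; $A'$ still has many zeros, hence a column $j$ with at least $\frac{\eps n}{2}$ zeros in $A'$. Pick a set $I$ of $\frac{\eps n}{2}$ rows having a zero in column $j$ in $A'$, and let $J_0$ be the columns to the left of $j$. Every row of $A[I\times J_0]$ then has many zeros in $J_0$ (the ones we converted). Now I invoke $Q_k$-freeness column by column: for each column $i\in I$, applying the forbidden-pattern condition with $c_{k+1}=j$ and with the topmost row of $I$ playing the role of $r_1$... actually more cleanly, I would order $I=\{i_1<\dots<i_{\eps n/2}\}$ and note that for each pair $i_a < i_b$ in $I$, since $i_b$ has a zero at column $j$, row $i_a$ has at most $k-1$ ones in the columns of $J_0$ where $i_b$ has a zero. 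I want to leverage this to show that the zero-sets of the rows in $I$, restricted to $J_0$, are "almost nested" or at least that a large common all-zero block exists.

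**The key combinatorial step.** Here is where I expect the real work to be. Let $Z_i \subseteq J_0$ denote the set of columns where row $i$ has a zero, for $i\in I$; we know $|Z_i|$ is large (at least the $\eps n/$(const) leading zeros we kept). For $i_a < i_b$, the constraint says row $i_a$ has $\le k-1$ ones inside $Z_{i_b}$, i.e. $|Z_{i_b}\setminus Z_{i_a}| \ge |Z_{i_b}| - (k-1)$, equivalently $Z_{i_b}$ is contained in $Z_{i_a}$ up to $k-1$ columns. So reading from bottom to top in $I$, the zero-sets are shrinking by at most $k-1$ each time... no wait, $Z_{i_b}\setminus Z_{i_a}$ small means $Z_{i_b}\subseteq Z_{i_a}$ up to $k-1$ elements, so going upward ($i_a$ above $i_b$) the set $Z_{i_a}$ contains almost all of $Z_{i_b}$. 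The cleanest route: take $i^* = \min I$; then every $Z_{i_b}$ for $i_b\in I$ agrees with $Z_{i^*}$ except on at most $k-1$ columns, so $|Z_{i^*}\cap Z_{i_b}| \ge |Z_{i_b}| - (k-1)$. But that bounds the intersection with one fixed row, not the common intersection of all $|I|$ rows. To get a genuine all-zero submatrix I need columns that are zero in many rows of $I$ simultaneously. I would instead double-count: the total number of one-entries in $A[I\times (Z_{i^*})]$ is at most $(|I|-1)(k-1)$ — wait, that's not immediate either since the constraint for the pair $(i_a,i_b)$ only controls ones of $i_a$ inside $Z_{i_b}$, not inside $Z_{i^*}$. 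Let me reconsider: use the constraint with $r_1 = i^* = \min I$ and each $r_2 = i_b$; this gives that $i^*$ has $\le k-1$ ones inside $Z_{i_b}$, which is automatic since $i^*$ has zeros on all of $Z_{i^*}\supseteq$ ... hmm, $Z_{i^*}$ need not contain $Z_{i_b}$. I think the right move is: restrict to the common zero-column set $Y = \bigcap_{i\in I} Z_i$ and argue $|Y|$ is large. For each column $c\in J_0$ that lies in some $Z_i$ but not in $Y$, ... Actually the efficient argument: apply the forbidden pattern with $r_1$ ranging over **all** rows $r$ with $r < j$-th-row... The paper states the $\eps/(2k)$ bound, and the factor $1/k$ strongly suggests that the argument loses a factor $k$ exactly because each of the $|I|$ rows above can "spoil" $k-1$ columns, so one should sort rows and greedily keep columns that survive all of them. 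I would make this precise by: within $I$, taking the topmost row $i^*$, it has $\le k-1$ ones in $Z_{i}$ for the row $i$ directly below — chaining this down $|I|$ times and being careful, one loses at most roughly $k|I|$ columns total from the starting pool of $\ge \eps n$ zeros, but $|I|$ is only $\eps n/2$ and... the arithmetic needs the initial zero-count per row to beat $k|I|$, which is why the reduction at the start should keep about $\eps n$ leading zeros while $I$ has size about $\eps n/k$. So I would choose the trimming threshold and $|I|$ in balance: keep the first $\eps n$ zeros, take $|I| = \frac{\eps n}{2k}$, losing $\le (k-1)|I| < \eps n/2$ columns, leaving an all-zero $\frac{\eps n}{2k}\times\frac{\eps n}{2}$ block and hence the desired square.

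**Main obstacle.** The delicate point is going from the pairwise constraint "row $r_1$ has $\le k-1$ ones among the zero-columns of $r_2$" to a genuine large common zero block, because pairwise near-containment of the sets $Z_i$ does not by itself force a large common intersection (a chain each dropping $k-1$ elements can drop $(k-1)|I|$ elements total). Managing this telescoping loss — and in particular choosing $|I|$ small enough (size $\Theta(\eps n/k)$) that the accumulated loss stays below half the columns — is the crux, and it is exactly what produces the $1/k$ factor in the statement. I expect the formal write-up to proceed by ordering $I$ from top to bottom, tracking the running common zero-set, and showing inductively it shrinks by at most $k-1$ per step.
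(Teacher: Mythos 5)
Your approach is essentially the paper's, but one direction is mixed up. The constraint you derived, $|Z_{i_b}\setminus Z_{i_a}|\le k-1$ for $i_a<i_b$, controls the escape of the \emph{lower} row's zero set from the upper one's, so the anchor must be the \emph{bottom} row $i_m=\max I$, not the topmost $i^*$: for $i^*=\min I$ the set $Z_{i^*}\setminus Z_{i_a}$ is not controlled by $Q_k$-freeness at all, which is exactly where you got stuck. Writing $\bigcap_{a\le m}Z_{i_a}=Z_{i_m}\setminus\bigcup_{a<m}\bigl(Z_{i_m}\setminus Z_{i_a}\bigr)$ and union-bounding gives $\bigl|\bigcap_a Z_{i_a}\bigr|\ge|Z_{i_m}|-(m-1)(k-1)$; note the running intersection does \emph{not} simply ``shrink by at most $k-1$ per step'' (each $R_\ell\setminus Z_{i_{\ell+1}}$ can be large), so this is a single union bound against the fixed set $Z_{i_m}$, not a chain. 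You also need the trimming threshold to be $\eps n/2$, not $\eps n$ (trimming $\eps n$ leading zeros per row could erase all $\eps n^2$ of them); then $|Z_{i_m}|\ge\eps n/2$, and taking $m=\eps n/(2k)$ leaves at least $\eps n/(2k)$ common zero columns. The paper packages the same count slightly differently: it trims rows \emph{and} columns, which yields an external anchor row $i_0$ lying below all of $I$ and all-zero on a column set $J$ of size $\eps n/2$; then every row of $I$ has at most $k-1$ ones in $J$, and \Cref{prop:all1row} (the $1\times k$ all-ones case) finishes in one line. Your single-trim variant, with the bottom row of $I$ playing the role of $i_0$, is the same argument once the direction is fixed.
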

\begin{proof}
Let $A$ be a $Q_k$-free $n\times n$ matrix with at least $\eps n^2$ 0-entries, and let $A'$ be the matrix obtained from $A$ by replacing the first $\eps n/2$ 0-entries in each row and column with 1's. It is easy to see that fewer than $\eps n^2$ entries were changed, so $A'(i_0,j_0)=0$ for some $i_0,j_0\in [n]$. By the definition of $A'$, we then have sets $I,J\subs [n]$ of size $\eps n/2$ such that $I<i_0$ and $J<j_0$, and for every $i\in I$ and $j\in J$, $A(i,j_0)=A(i_0,j)=0$.

Now $A[I\times J]$ is an $\frac{\eps n}{2}\times \frac{\eps n}{2}$ matrix, and as $A$ is $Q_k$-free, it does not contain a $1\times k$ all-1 submatrix. Then, by \Cref{prop:all1row}, it has an  $\frac{\eps n}{2k}\times \frac{\eps n}{2k}$ all-0 submatrix.
\end{proof}

The difficult part is to show that for every $\epsilon>0$, $Q^{c}$ is also $(\eps,\delta)$-good for some $\delta>0$. We prove this in the next lemma.
\begin{lemma} \label{lemma:1corner}
Let $A$ be an $n\times n$ \zm matrix with at least $\eps n^2$ 1-entries. If $A$ does not contain $Q$, then it has an $\frac{\eps n}{18}\times \frac{\eps n}{18}$ all-1 submatrix.
\end{lemma}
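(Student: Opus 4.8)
The plan is to reformulate $Q$-freeness entry-by-entry. Write $C_j=\{i:A(i,j)=1\}$ and $R_i=\{j:A(i,j)=1\}$. If $A$ is $Q$-free then for every $1$-entry $(i,j)$ the submatrix $A[U\times V]$, with $U=\{i'>i:A(i',j)=0\}$ and $V=\{j'>j:A(i,j')=0\}$, is all-$1$: a $0$ at some $(i',j')$ in it, together with $A(i,j)=1$, $A(i,j')=0$ and $A(i',j)=0$, is a copy of $Q$ on rows $\{i,i'\}$ and columns $\{j,j'\}$. I will also use the ``chain'' consequence: if $j_1<\dots<j_s$ are the columns in which row $i'$ has a $0$, then $C_{j_1}\cap[1,i')\subseteq\dots\subseteq C_{j_s}\cap[1,i')$ (a $1$ in $C_{j_a}\cap[1,i')$ and a $0$ in the same row of column $j_b$ with $a<b$ would form a copy of $Q$), so $A\bigl[(C_{j_a}\cap[1,i'))\times\{j_a,\dots,j_s\}\bigr]$ is all-$1$ for every $a$; since $Q^{\top}=Q$, the transposed statements hold with rows and columns interchanged.

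Set $m=\lceil\eps n/18\rceil$ and suppose for contradiction that $A$ has no all-$1$ $m\times m$ submatrix. By the first observation, every $1$-entry $(i,j)$ has $|U|<m$ or $|V|<m$; that is, row $i$ has fewer than $m$ zeros to the right of $j$, or column $j$ has fewer than $m$ zeros below $i$. Since $A^{\top}$ is $Q$-free with the same number of $1$'s and no all-$1$ $m\times m$ submatrix, we may transpose if necessary and assume that at least $\tfrac{\eps}{2}n^2$ of the $1$-entries $(i,j)$ have fewer than $m$ zeros to the right of $j$ in row $i$; call these \emph{good}, and let $r_i$ be their number in row $i$, so $\sum_ir_i\ge\tfrac{\eps}{2}n^2$. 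A direct computation shows that the good $1$-entries of row $i$ lie in the last $m+r_i$ columns, namely in $[n+1-m-r_i,n]$, which contains exactly $m$ zeros of row $i$ (unless the whole row has fewer than $m$ zeros).

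Now sum the good $1$-entries over columns. The last $m$ columns carry fewer than $mn$ of them, so some column $j_0\le n-m$ carries at least $\tfrac1n\bigl(\tfrac\eps2n^2-mn\bigr)\ge\tfrac{4\eps}{9}n-1$ good $1$-entries; hence there is a set $I_0$ of at least $\tfrac{4\eps}{9}n-1$ rows such that, for every $i\in I_0$, $A(i,j_0)=1$ and row $i$ has fewer than $m$ zeros in the column interval $[j_0,n]$. It therefore suffices to find an all-$1$ $m\times m$ submatrix in the $Q$-free block $A\bigl[I_0\times[j_0,n]\bigr]$, which is very dense (at most $m$ zeros per row).

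I expect this last step to be the main obstacle, and the worst case to be when $[j_0,n]$ is not much longer than $m$ — equivalently, when the $1$'s of $A$ are crowded toward the bottom-right corner of the grid. In that regime none of the ``expanding'' observations help: the first observation and the chain observations all manufacture all-$1$ submatrices in one fixed diagonal direction, which the matrix $A(i,j)=1\iff i+j\ge n$ (it is $Q$-free, has $\sim n^2/2$ ones, and its largest all-$1$ submatrix is the $\sim\tfrac n2\times\tfrac n2$ bottom-right square) renders useless; and since $m=\Theta(n)$, the crude implication ``fewer than $m$ zeros per row $\Rightarrow$ many fully-$1$ columns'' is hopelessly weak. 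The way around should be to use the chain inequalities — which force the $1$'s of the densest rows of a $Q$-free matrix to sit near the bottom of each column's run of $1$'s — to locate a single column that is simultaneously far enough to the right and sparse enough near its top that the chain observation applies to it; making all the estimates fit is presumably what pins down the constant $18$ rather than a larger one.
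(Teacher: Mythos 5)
Your preliminary observations about $Q$-freeness are correct (the all-$1$ rectangle $U\times V$ anchored at a $1$-entry, the nested sets $C_{j_a}\cap[1,i')$, the transposition reduction using $Q^{\top}=Q$), and the averaging that produces a dense $Q$-free block $A[I_0\times[j_0,n]]$ is arithmetically sound. But the proof stops precisely where the difficulty begins: you explicitly say you expect the last step to be the main obstacle and then offer only a heuristic discussion rather than an argument, so the proposal is incomplete. Worse, the reduction is too lossy to succeed as stated: the properties you carry into the block --- it is $Q$-free, has roughly $\tfrac{4\eps n}{9}$ rows, at least $m+1$ columns, and fewer than $m$ zeros per row --- do not by themselves force an $m\times m$ all-$1$ submatrix. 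A block of width $m+1$ in which every row equals $(1,1,0,\dots,0)$ is $Q$-free, satisfies all these constraints, and has no all-$1$ submatrix with more than two columns. Any completion would therefore have to import more structure from the ambient matrix than your reduction retains, and the chain inequalities you gesture at are not obviously enough.

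The paper's proof is structurally different. It splits $A$ by a vertical cut into a left block $X$ and a right block $Y$, each containing a constant fraction of the $1$s, restricts to rows that are dense in $X$, and defines an auxiliary graph $G$ on the $0$-entries of $Y$ in which two $0$-entries are joined if they share a row or a column. The key tool is the notion of a \emph{row-monotone path} in $G$: by $Q$-freeness, a $1$ at $(r(v),x)$ in $X$ propagates to $(r(u),x)$ for every $u$ reachable from $v$ along such a path, so a vertex reaching many rows yields a large all-$1$ submatrix of $X$. A second application of $Q$-freeness --- a surgery argument on a weight-minimizing walk, ruling out any ``bottom-right corner'' --- shows that each connected component of $G$ is fully reachable by row-monotone paths from its topmost vertex, and if no component spans $\eps n/18$ rows a block-diagonal argument on $X$ finishes directly. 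This graph-on-$0$-entries and monotone-path machinery is the actual content of the lemma and has no counterpart in your proposal.
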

\begin{proof}
For an index $i\in [n]$, let $X_i$ denote the submatrix formed by the first $i$ columns of $A$ and let $Y_i$ denote the submatrix of the last $n-i$ columns. Then for some $i$, both $X_i$ and $Y_i$ contain at least $\eps n^2/3$ 1-entries. Note that this implies, in particular, that both $X_i$ and $Y_i$ have at least $\eps n/3$ columns. Also, $X_i$ has at least $\eps n/6$ rows containing at least $\eps n/6$ 1-entries. Indeed, otherwise $X_i$ would contain fewer than $\frac{\eps n}{6}\cdot n+(n-\frac{\eps n}{6})\cdot \frac{\eps n}{6}<\eps n^2/3$ 1-entries in total, which is not the case. Let $X$ be the submatrix of $X_i$ consisting of $\eps n/6$ such rows, and let $Y$ be an $\frac{\eps n}{6}\times\frac{\eps n}{6}$ submatrix of the same rows in $Y_i$.

Now let us define the graph $G$ on the 0-entries of $Y$ as vertices, where we connect two 0-entries by an edge if they are in the same row or the same column of $Y$. For a vertex $v$ in $G$, we define $r(v)$ and $c(v)$ as the row and column of $v$, respectively. We say that a path $v_1\dots v_k$ in $G$ is row-monotone if $r(v_1)\le\dots\le r(v_k)$. This notion is motivated by the following claim.

\begin{claim} \label{claim:copy}
Let $v\in G$ be a vertex of $G$, and let $U=\{u_1,\dots,u_s\}$ be the set of vertices that can be reached from $v$ via a row-monotone path. Then $X$ contains a $t\times\frac{\eps n}{6}$ all-1 submatrix, where $t=|r(U)|$ is the number of different rows of $U$.
\end{claim}
\begin{proof}
Let $u\in U$ be a vertex that can be reached from $v$ via a row-monotone path $u_0u_1\dots u_k$, where $u_0=v$ and $u_k=u$. We are going to show that if $A(r(v),x)=1$ for some column $x$ of $X$, then $A(r(u),x)=1$, as well. In fact, we will show $A(r(u_i),x)=1$ for every $i$, by induction.

Assume this holds for some $i$ (the case $i=0$ is trivial). If $r(u_i)=r(u_{i+1})$, then there is nothing to prove. Otherwise, $r(u_i)<r(u_{i+1})$ and $c(u_i)=c(u_{i+1})$ by the definition of the path. Let us look at the submatrix $A[\{r(u_i),r(u_{i+1})\}\times\{x,c(u_i)\}]$. The entries in the second column are 0 by the definition of $G$, and the top left entry is 1 by assumption. But this submatrix cannot be $Q$, so the bottom left entry $A(r(u_{i+1}),x)$ must also be 1, as needed.

This shows that in $X$, the rows of $r(U)$ have 1-entries wherever $r(v)$ does. The row $r(v)$, like every row of $X$, contains at least $\eps n/6$ 1-entries, so the rows of $r(U)$ together produce a $t\times \frac{\eps n}{6}$ all-1 submatrix.
\end{proof}

\Cref{claim:copy} shows that it would be enough to find a vertex in $G$ that sends monotone paths to at least $\eps n/18$ different rows. The next claim shows that each connected component of $G$ has a vertex $v$ that reaches the whole component via monotone paths.

\begin{claim} \label{claim:comp}
Let $C$ be a connected component of $G$ and let $v\in C$ be a vertex such that $r(v)$ is smallest. Then for every vertex $u\in C$, there is a row-monotone path from $v$ to $u$.
\end{claim}
\begin{proof}
Let $P=v_0\dots v_k$ be a $v$-$u$ walk in $C$ that minimizes $\sum_{w\in P} r(w)$. We will show that $P$ is a row-monotone path. First, we establish the following simple properties for every such minimal path:
\begin{enumerate}
\item $P$ has no three collinear vertices, i.e., there is no $i$ such that $c(v_{i-1})=c(v_i)=c(v_{i+1})$ or $r(v_{i-1})=r(v_i)=r(v_{i+1})$.
\item There is no ``bottom right corner'' in $P$, i.e., there is no $i$ such that $r(v_{i-1})<r(v_i)$ and $c(v_i)>c(v_{i+1})$, and there is no $i$ with $c(v_{i-1})<c(v_i)$ and $r(v_i)>r(v_{i+1})$.
\end{enumerate}
The first property is clear: we would get a better $v$-$u$ walk by simply deleting $v_i$ from $P$. For the second property, suppose there is an $i$ satisfying $r(v_{i-1})<r(v_i)$ and $c(v_i)>c(v_{i+1})$, and look at the $2\times 2$ submatrix $M=A[\{r(v_{i-1}), r(v_i)\} \times \{c(v_{i+1}),c(v_{i})\}]$. Using $c(v_{i-1})=c(v_i)$ and $r(v_i)=r(v_{i+1})$, we see that $P$ contains all entries of this submatrix, except for the top left entry. All vertices in $P$ are 0-entries, so $A(r(v_{i-1}),c(v_{i+1}))= 0$, as well, for otherwise $M=Q$. Then we could replace $v_i$ with the vertex corresponding to this top left entry, and get a new $P$ with smaller $\sum_{w\in P} r(w)$. The other case of property 2 can be proved analogously.

Now let $j$ be the smallest index such that $r(v_j)\ne r(v)$. By the definition of $v$, we have $r(v_j)>r(v_{j-1})$. We can show by induction that from $v_{j-1}$ on, $P$ alternately moves downwards and to the right. Indeed, property 1 shows that the path changes direction after each edge. Now suppose that at some point it moves downwards, i.e., $r(v_{i-1})<r(v_i)$ (as is the case for $i=j$). Then according to property 2, we cannot move towards the left, so we must have $c(v_i)<c(v_{i+1})$. On the other hand, if the path moves to the right, i.e., $c(v_{i-1})<c(v_i)$, then the second case of property 2 forbids a move upwards in the next step, so we must have $r(v_i)<r(v_{i+1})$.

This means that the row coordinates never decrease along $P$, so it is indeed a row-monotone $v$-$u$ walk. In fact, it is a path because of its minimality.
\end{proof}

Now if a component of $G$ has vertices in at least $\eps n/18$ rows, then \Cref{claim:comp,claim:copy} together imply that $X$ contains an $\frac{\eps n}{18}\times \frac{\eps n}{18}$ all-1 submatrix, as needed. The next claim shows that if there is no such component, then we can find a large all-1 submatrix in $Y$, without even forbidding $Q$.

\begin{claim} 
Suppose no component of $G$ has vertices in $\eps n/18$ different rows. Then $Y$ contains an $\frac{\eps n}{18}\times \frac{\eps n}{18}$ all-1 submatrix.
\end{claim}
\begin{proof}
Let $C_1,\dots,C_k$ be the components of $G$, and let $r(C_i)$ and $c(C_i)$ be the row and column sets of $C_i$. Note that all the 0-entries of $Y$ in rows $r(C_i)$ or columns $c(C_i)$ are inside $A[r(C_i)\times c(C_i)]$.

Swapping rows and columns does not affect our statement, so let us reorder the rows and columns of $Y$ so that $r(C_1)$ are the first $|r(C_1)|$ rows, followed by the rows $r(C_2)$, etc., and similarly for columns. This way we get a block-diagonal matrix with blocks $B_i=r(C_i)\times c(C_i)$, where each block has height less than $\eps n/18$ and all the 0-entries are inside the blocks.

Consider the block $B_i$ that touches the $\frac{\epsilon n}{12}$'th (essentially the middle) column of $Y$. If no such block exists, then the right half of $Y$ is an $\frac{\eps n}{6}\times\frac{\eps n}{12}$ all-1 submatrix, so we are done. We know that $B_i$ has fewer than $\eps n/18$ rows, so this block cannot contain entries from both the $\frac{\eps n}{18}$'th and the $\frac{2\eps n}{18}$'th rows of $Y$. If it is disjoint from the $\frac{\eps n}{18}$'th row, then there is an $\frac{\eps n}{18}\times\frac{\eps n}{12}$ all-1 submatrix in the top right corner of $Y$. Otherwise, we find such a submatrix in the bottom left corner of $Y$.
\end{proof}

This completes the proof of \Cref{lemma:1corner}.
\end{proof}

\begin{proof}[Proof of \Cref{thm:2}]
Let $A$ be an $n\times n$ $Q$-free \zm matrix. If $A$ contains at least $2n^2/20$ 0-entries, then by \Cref{lemma:1cornereasy}, it has an $\frac{n}{20}\times\frac{n}{20}$ all-0 submatrix. Otherwise, $A$ contains at least $18n^2/20$ 1-entries, so we can apply \Cref{lemma:1corner} to find an $\frac{n}{20}\times\frac{n}{20}$ all-1 submatrix in $A$.
\end{proof}

The above proof breaks completely if instead of $Q$ we forbid an arbitrary simple $2\times k$ matrix, although most of it (including a weakening of \Cref{claim:copy}) is salvageable in the special case when we forbid $Q_k$. Unfortunately, \Cref{claim:comp} is false even in this case, and we do not see any meaningful way to circumvent it. The best we can do for $Q_k$-free matrices is to find a homogeneous submatrix of size $\frac{cn}{\log n}\times \frac{cn}{\log n}$ using the methods of \Cref{sec:general}.

\section{General $2\times k$ matrices--Proof of \Cref{thm:3}} \label{sec:general}

In this section, we prove \Cref{thm:3} with the help of partial orders. A \emph{comparability graph} is a graph $G$ whose edges correspond to comparable pairs in some partial order on $V(G)$. The key idea in our proof is to introduce partial orders on the rows of $A$ using the forbidden submatrix. To find the homogeneous submatrices, we need to analyze complete bipartite subgraphs in the comparability graphs and their complement. Our bound on the size of the homogeneous submatrix comes from the following result of Fox and Pach \cite{FP}.

\begin{theorem}[Fox, Pach]\label{thm:poset}
	Let $G$ be the union of $k$ comparability graphs $G_{1},\dots,G_{k}$ on the same $n$ vertices. 
	Then either one of the graphs $G_1,\dots, G_k$ or the complement of $G$ 
	contains a complete bipartite graph with parts of size $n2^{-(1+o(1))(\log\log n)^{k}}$.
\end{theorem}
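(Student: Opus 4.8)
\medskip
\noindent\textbf{Proof plan.}
The plan is to prove the theorem by induction on $k$, the crucial case being $k=1$, which amounts to a bipartite analogue of Dilworth's theorem for a single partial order. First I would establish the following lemma: \emph{for every partial order $\prec$ on an $N$-element set there are disjoint subsets $X,Y$ with $|X|=|Y|\ge N/(\log N)^{1+o(1)}$ such that either $x\prec y$ holds for all $x\in X$ and all $y\in Y$, or no $x\in X$ is $\prec$-comparable to any $y\in Y$.} This immediately gives the case $k=1$: in the first alternative $X\cup Y$ spans a $K_{|X|,|Y|}$ inside the comparability graph $G$, and in the second it spans a $K_{|X|,|Y|}$ inside the complement of $G$, since a complete bipartite subgraph of the complement only requires the absence of edges \emph{between} the two parts.

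For the inductive step I would peel off one comparability graph at a time. Applying the $k=1$ lemma to the partial order that realises $G_k$ yields large disjoint sets $A$ and $B$; if every element of $A$ lies below every element of $B$ then $A\cup B$ already spans a large complete bipartite subgraph of $G_k$ and we are done, so the remaining case is that $A$ and $B$ are completely incomparable in the $k$-th order, i.e.\ $G_k$ has no edge between $A$ and $B$. The graphs $G_1,\dots,G_{k-1}$ restricted to $A\cup B$ are again comparability graphs, so I would recurse with these $k-1$ graphs on $A\cup B$. If the recursion produces a complete bipartite subgraph inside some $G_i$ with $i<k$ we are done; if instead it produces a complete bipartite subgraph $A'\times B'$ in the complement of $G_1\cup\dots\cup G_{k-1}$, then -- provided the recursion is arranged so that $A'\subseteq A$ and $B'\subseteq B$ -- the pair $A',B'$ inherits the absence of $G_k$-edges and hence spans a complete bipartite subgraph of the complement of $G=G_1\cup\dots\cup G_k$. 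Tracking the sizes through the $k$ rounds is then designed to yield the bound $n\,2^{-(1+o(1))(\log\log n)^k}$.

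I expect two steps to be the main obstacles. The first is the $k=1$ lemma: the obvious attempts -- a longest chain (which gives a complete bipartite subgraph of $G$) or, via Dilworth's or Mirsky's theorem, a largest antichain (which gives one in the complement) -- only produce sets of size $\Theta(\sqrt N)$, which is far short of $N/(\log N)^{1+o(1)}$. Getting the right order requires a finer decomposition: for instance, repeatedly stripping the antichain of minimal elements and using a Mirsky-type pigeonhole to locate a rank level of size $N/\mathrm{polylog}(N)$ when the order is short, together with a down-set/up-set ``box'' extraction for the tall regime; and one should check this against the extremal examples coming from middle layers of Boolean lattices, which show that the $(\log N)^{-1-o(1)}$ factor cannot be removed. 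The second obstacle is the bookkeeping in the induction. One cannot simply search for a complete or empty bipartite pattern \emph{between} $A$ and $B$ with the $k-1$ remaining orders, because the comparability graph between two sets can be essentially arbitrary -- height-two orders realise every bipartite graph -- so the inductive statement has to be phrased in a bipartite-aware way that still forces the recursively produced pair onto opposite sides of $(A,B)$; and one has to verify that the $(\log\log)$-type losses compound into precisely $(\log\log n)^k$ rather than growing out of control.
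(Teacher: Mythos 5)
First, note that the paper does not prove \Cref{thm:poset} at all: it is quoted from Fox and Pach \cite{FP} and used as a black box, so there is no in-paper argument to match your plan against; it has to stand on its own. As it stands, it has a genuine gap at its central step, one that you name but do not resolve, and that in fact cannot be resolved in the form you propose. After the $k$-th order yields a pair $(A,B)$ with no $G_k$-edges between $A$ and $B$, your recursion on $A\cup B$ with the remaining $k-1$ orders must return parts $A'\subseteq A$ and $B'\subseteq B$; the unrestricted theorem gives no control over where the parts land (and $G_k$ may be complete inside $A$, so a complement-pair with both parts meeting $A$ is useless), while the ``bipartite-aware'' strengthening you would need is false. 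Indeed, take a single height-two order with $A$ as minimal and $B$ as maximal elements realising a Ramsey-type bipartite graph between the two sides: its comparability graph has no edges inside $A$ or inside $B$, every complete bipartite subgraph of it is automatically crossing, and neither it nor its bipartite complement contains a crossing homogeneous pair with parts larger than $O(\log m)$. So no version of the inductive statement that forces the complement outcome onto opposite sides of $(A,B)$ can hold with parts of size $m/\mathrm{polylog}(m)$, and the peeling induction collapses. There is also a bookkeeping red flag confirming this: if each of the $k$ rounds lost only a polylogarithmic factor, you would obtain a bound of the form $n/(\log n)^{O(k)}$, which is much stronger than the stated $n2^{-(1+o(1))(\log\log n)^{k}}$ and essentially matches the ceiling $O(n/(\log n)^k)$ of Kor\'andi and Tomon \cite{KT} quoted in \Cref{sect:general}; improving \Cref{thm:poset} to that range is an open problem, so it cannot drop out of this straightforward recursion, and the actual proof in \cite{FP} is structured differently precisely because of the obstruction above (which is why its bound degrades like $(\log\log n)^k$ in the exponent).

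The base case is also not established as written. The $k=1$ statement is Fox's bipartite analogue of Dilworth's theorem and is genuinely nontrivial: the dichotomy you sketch (a Mirsky/level pigeonhole when the height is small, a down-set/up-set ``box'' when some element has large sets on both sides) balances at $\Theta(\sqrt N)$, since if every element has a down-set or an up-set of size less than $t$ one only extracts an antichain of size about $N/t$, while the other branch gives a comparable pair of size about $t$. Reaching $N/(\log N)^{1+o(1)}$ requires the finer argument of Fox (or \cite{FP}), which your outline gestures at but does not supply. So neither the base case nor the inductive step is proved, and the inductive step needs a different idea, not just careful bookkeeping.
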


For simplicity, we write $f_{k}(n)=n2^{-(1+o(1))(\log\log n)^{k}}$. We show that if $P$ is $2\times k$ acyclic, then we can find an all-0 matrix of almost linear size in any $P$-free \zm matrix, where the density of $0$-entries is positive.

\begin{lemma}\label{lemma:ordered}
	Let $P$ be an acyclic $2\times k$ \zm matrix. For every $\eps>0$, there is a $\delta$ such that every $P$-free $n\times n$ \zm matrix with at least $\eps n^2$ 0-entries contains an $f_k(\delta n)\times \delta n$ all-0 submatrix.
\end{lemma}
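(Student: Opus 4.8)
The plan is to reduce the problem to the Fox--Pach theorem by building $k$ comparability graphs on the rows of $A$ whose union "sees" every potential copy of $P$. Since $P$ is an acyclic $2\times k$ matrix, the bipartite graph on its two rows with column-classes as the other side is a forest; concretely, after permuting columns we may assume $P$ looks like a union of two "stars" (a caterpillar-like structure), i.e. there is a column where both rows have prescribed entries, and the remaining columns split into those forced in row $1$ and those forced in row $2$. The point of acyclicity is that we can order the columns of $P$ in a way that lets us read off a copy of $P$ in $A$ from a single "pivot" relation between two rows together with how many $0$'s (or $1$'s) each of the two rows has to the left/right of the pivot.

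First I would set up a cleaning step analogous to \Cref{lem:homcolumn} and \Cref{lemma:1cornereasy}: replace the first $\eps n/c$ and last $\eps n/c$ $0$-entries of each row and column by $1$'s, so that any surviving $0$-entry in row $i$ at column $j$ is "padded" by many $0$-entries of row $i$ on both sides and many $0$-entries of column $j$ above and below. This reduces to the case where we only need to find a pair of rows $r,q$ together with a single column $j$ such that $A(r,j)=A(q,j)=0$ and each of $r,q$ has at least $k$ zeros strictly to the left of $j$ and at least $k$ zeros strictly to the right — because such a configuration already contains the relevant $P$ (here is where the exact shape of the acyclic $P$ enters: the padding zeros supply the star-leaves, and the pivot supplies the center). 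Then I would define, for each "type" of pivot needed by $P$ (there are at most $k$ relevant types, corresponding to which side of the center a leaf sits and whether it is a $0$- or $1$-leaf), a relation $R_t$ on the rows: put $r <_t q$ roughly when every $0$-entry of $q$ is "dominated" in the appropriate columnwise sense by a $0$-entry of $r$, or some similar column-coordinate comparison. Each such $R_t$ is a partial order (transitivity is immediate from the coordinate definition), so its comparability graph $G_t$ is a comparability graph, and $P$-freeness of $A$ translates into: no two rows with at least $\eps n/c$ zeros each are incomparable-in-all-$R_t$ while also being "joined" in the complement — more precisely, $P$-freeness forbids a complete bipartite graph in $\overline{G_1\cup\cdots\cup G_k}$ among the heavy rows that would witness $P$, while a large complete bipartite graph inside some $G_t$ gives a set of rows totally ordered by $R_t$, from which a large all-$0$ submatrix falls out directly (a chain in the domination order on zero-sets yields a common block of zero-columns of size $\delta n$, with the $f_k$ loss absorbed on the row side).

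Then I would apply \Cref{thm:poset} to $G_1,\dots,G_k$ restricted to the $\Theta(\eps n)$ rows that are "heavy" (contain at least $\eps n/c$ zeros). The theorem gives either a complete bipartite graph of size $f_k(\delta' n)$ in some $G_t$ — handled above — or a complete bipartite graph of that size in $\overline{G_1\cup\cdots\cup G_k}$; I must show the latter is impossible, i.e. that two heavy rows that are incomparable in every $R_t$, together with the padding, actually realize $P$ in $A$, contradicting $P$-freeness. The main obstacle is precisely getting the $R_t$'s right: I need that "incomparable in all of them" is a strong enough statement to force a copy of $P$ between just two rows, which is why the cleaning step (guaranteeing lots of padding zeros in both directions) and the acyclic/star structure of $P$ are both essential; matching the combinatorics of which star-leaves lie on which side of the center against the coordinate definitions of the $R_t$ is the delicate bookkeeping. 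A secondary point is bounding the number of comparability graphs by $k$ (not more) so that the $f_k$ in the statement is the right one — this should follow because $P$ has only $k$ columns and hence only $k$ leaves to account for. Collecting the constants, the all-$0$ submatrix produced has $\delta n$ columns (from the chain of zero-sets) and $f_k(\delta n)$ rows (from the Fox--Pach bound), which is the claimed $f_k(\delta n)\times \delta n$.
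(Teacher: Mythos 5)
Your high-level plan matches the paper's: define $k$ relations on rows based on how a pair of rows can or cannot realize each column of $P$, observe that for ``good'' column types these are comparability graphs, and feed them into \Cref{thm:poset}. But there is a real gap in how you treat the two outcomes of the Fox--Pach theorem, and it stems from not distinguishing column types in $P$.

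An acyclic $2\times k$ matrix can have at most one all-$1$ column, but it can have one, and that case breaks your argument in two places. First, the relation ``rows $r,q$ do not together realize a column $\binom{1}{1}$ in the interval $I_q$'' is \emph{not} a comparability relation (it is symmetric and typically intransitive), so you cannot include it among the $G_t$'s you hand to \Cref{thm:poset}. The paper only applies Fox--Pach to the graphs $G_i$ coming from \emph{inhomogeneous} columns of $P$, since those correspond to containment orders of zero-sets (and empty graphs from all-$0$ columns can be discarded). Second, and more importantly, you claim that a $K_{m,m}$ in the complement of $\bigcup_t G_t$ would force a copy of $P$ and hence be a contradiction. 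That is only true if $\bigcup_t G_t$ is already the complete graph, which holds if $P$ has no all-$1$ column. If $P$ does have an all-$1$ column $q$, the complement of the union of the inhomogeneous-column comparability graphs can be nonempty, and one does \emph{not} get a contradiction; instead, the complement $K_{m,m}$ must sit inside the (non-comparability) graph $G_q$, and the paper uses the structure of $G_q$ constructively: each column of $A_q$ has a $1$ in at most one of the two row classes, so one class has many all-$0$ columns. You need this extra branch, and it is not a triviality.

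Two smaller issues. Your preprocessing — padding with $\eps n/c$ zeros on both sides — does not by itself produce the disjoint column intervals $I_1,\dots,I_k$ that the argument needs so that each column of $P$ is assigned its own block of columns in $A$; the paper's ``nice $(k+1)$-tuple'' counting step exists precisely to furnish that decomposition. And your claim that a $K_{m,m}$ inside a comparability graph yields a \emph{chain} of rows is false in general (two stacked antichains give $K_{m,m}$ with all chains of length $2$). The conclusion you want still holds, but via a different observation: the comparability in $G_i$ is aligned with the row indexing ($r<r'$ joined iff $X_r\subseteq X_{r'}$), so the smallest-indexed vertex $v$ of the $K_{m,m}$ has $X_v\subseteq X_w$ for every $w$ in the opposite part, and $X_v$ is large by the preprocessing. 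It is this single extremal vertex, not a chain, that produces the all-$0$ block.
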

\begin{proof}
Let $\delta=(\frac{\eps n}{16k})^{k+1}$. We will start with some preprocessing on $A$ to find a large submatrix with $k+1$ ``nice'' all-0 columns, such that every row contains many 0-entries between any two nice columns.

Let us call a $(k+1)$-tuple $(c_1,\dots,c_{k+1})$ \emph{nice} for a row $r$ if $c_1<\dots<c_{k+1}$, $A(r,c_i)=0$ for every $i$, and there are at least $\frac{\eps n}{8k}$ 0-entries in $A\big[\{r\}\times [c_i+1,c_{i+1}]\big]$ for $i=1,\dots,k$.	

If the $r$'th row of $P$ contains at least $\frac{\eps n}{2}$ 0-entries, then there are at least $(\frac{\eps n}{8k})^{k+1}$ nice $(k+1)$-tuples for $r$. Indeed, if the columns of the 0-entries in the $r$'th row are $j_{1}<\dots<j_{\ell}$, then every $(k+1)$-tuple $(j_{x_{1}},j_{x_{2}},\dots,j_{x_{k+1}})$ is a nice $(k+1)$-tuple for $r$, whenever $ \frac{\eps ni}{4k}-\frac{\eps n}{16k}\le x_i \le\frac{\eps ni}{4k}+\frac{\eps n}{16k}.$

The number of rows with at least $\frac{\eps n}{2}$ 0-entries is at least $\frac{\eps n}{2}$. Hence, there are at least $(\frac{\eps n}{8k})^{k+2}$  $(k+1)$-tuples in total (with multiplicities) that are nice for some row. As the number of different $(k+1)$-tuples in $[n]$ is less than $n^{k+1}$, some $(k+1)$-tuple $(c_{1},\dots,c_{k+1})$ is nice for at least $(\frac{\eps }{8k})^{k+2}n$ rows $r$. Let $V$ be a set of $(\frac{\eps}{8k})^{k+2}n$ such rows, and let $I_i$ be the interval $[c_i+1,c_{i+1}]$ for $i=1,\dots,k$. Then each row of every matrix $A_i=A[V\times I_i]$ contains at least $\frac{\eps n}{8k}$ 0-entries, and the last column of every $A_i$ is all-0.
\medskip

For every $i\in [k]$, define the graph $G_i$ on vertex set $V$ as follows. We join $a$ and $b$ in $V$ by an edge if the submatrix of $A_i$ induced by rows $\{a,b\}$ does not contain the $i$'th column of $P$. As $A$ is $P$-free, $\bigcup_{i\in [k]}G_{i}$ must be the complete graph on $V$. 

Let us make some observations about these graphs. First of all, if the $i$'th column of $P$ is all-0, then $G_i$ is empty because $A_i$ has an all-0 column. Note also that $P$ can have at most one all-1 column, otherwise it would not be acyclic. Finally (and crucially), if the $i$'th column of $P$ is not homogeneous, then $G_i$ is a comparability graph. Indeed, suppose that the $i$'th column of $P$ is $\begin{pmatrix}0\\1\end{pmatrix}$. For a row $r\in V$, let $X_{r}$ be the set of columns $s$ such that $A_i(r,s)=0$. Then for $r,r'\in V$, where $r<r'$, we have that $r$ and $r'$ are joined by an edge in $G_{i}$ if and only if $X_{r}\subs X_{r'}$. As the relation $\{(r,r') : r<r' \mbox{ and } X_r\subs X_{r'}\}$ is easily seen to be a poset, $G_i$ is indeed a comparability graph. 
A similar argument works if the $i$'th column of $P$ is $\begin{pmatrix}1\\0\end{pmatrix}$.

Let $K\subs [k]$ be the set of inhomogeneous columns in $P$, and let $G=\bigcup_{i\in K} G_i$. By \Cref{thm:poset}, either some $G_i$ or the complement of $G$ contains a complete bipartite graph with parts of size $m=f_k(|V|)$. First suppose that $G_i$ contains $K_{m,m}$ for some $i\in K$. We may assume by symmetry that the $i$'th column of $P$ is $\begin{pmatrix}0\\1\end{pmatrix}$ . Let $v\in V$ be the first row in $A_i$ that appears in this $K_{m,m}$. Then $v$ is adjacent to a set $W\subs V$ of $m$ rows below it, and $X_v\subs X_w$ for every $w\in W$. Recall that $|X_v|\ge \frac{\eps n}{8k}$ by the construction of $A_i$, so $A_i[W\times X_i]$ is an $m\times \frac{\eps n}{8k}$ all-0 submatrix of $A$, as needed. 

Now suppose that the complement of $G$ contains $K_{m,m}$. As $G_i$ is empty for all-0 columns of $P$ and $\bigcup_{i\in [k]}G_{i}$ is the complete graph on $V$, $P$ must have an all-1 column $q$, and the $K_{m,m}$ must be a subgraph of $G_q$. Let $S,T\subs V$ be the two vertex classes of this $K_{m,m}$. By the definition of $G_q$, each column of $A_q$ contains a 1-entry in at most one of $A_q[S\times I_q]$ and $A_q[T\times I_q]$. As $A_q$ has at least $\frac{\eps n}{8k}$ columns, one of $A_q[S\times I_q]$ or $A_q[T\times I_q]$ contains at least $\frac{\eps n}{16k}$ all-0 columns, so $A_{q}$ has an $m\times \frac{\eps n}{16k}$ all-0 submatrix, finishing the proof.
\end{proof}

\begin{proof}[Proof of \Cref{thm:3}]
 Let $A$ be an $n\times n$ $P$-free matrix. As $P$ is simple, both $P$ and $P^c$ are acyclic. So, if $A$ has at least $n^2/2$ 0-entries, we can apply \Cref{lemma:ordered} to $A$ with $P$ and $\eps=1/2$. Otherwise, we can apply the lemma to $A^c$ with $P^c$ and $\eps=1/2$. Either way, we find an $n^{1-o(1)}\times \Omega(n)$ homogeneous submatrix in $A$.
\end{proof}

Note that any improvement in \Cref{thm:poset} would also improve our theorem. However, this alone will not be sufficient to find a linear-size homogeneous submatrix. Indeed, as was shown recently by Kor\'andi and Tomon \cite{KT}, the size of the bipartite graph in \Cref{thm:poset} cannot be replaced by anything larger than $\Omega(n/(\log n)^{k})$.

\medskip

On the other hand, one can find slightly larger all-0 submatrices in \Cref{lemma:ordered} by reducing the number of partial orders we use. For example, we may assume that $K$ in the proof has size at most $k-1$, as otherwise there are no homogeneous columns in $P$ and we can apply \Cref{thm:1}. This immediately guarantees an $f_{k-1}(\delta n)\times \delta n$ homogeneous submatrix.

It is also enough to use just one matrix $A_i$ (and comparability graph $G_i$) for consecutive columns of $P$ if they are the same. For example, if $\ell$ consecutive columns equal $\begin{pmatrix}0\\1\end{pmatrix}$, then one can take $G_i$ to be the comparability graph where two rows $r<r'$ are joined by an edge if $|X_r\setminus X_{r'}|\le (\ell-1)(r-r')$, and use it to embed all $\ell$ columns in $A_i$. With this argument, one can find a $\Omega(\frac{n}{\log n})\times \Omega(n)$ homogeneous submatrix in any $n\times n$ $Q_k$-free \zm matrix.

\section{Matrices without two ones in a column--Proof of \Cref{thm:perm}}\label{sec:perm}

In this section, we prove \Cref{thm:perm}. The main part of our proof is to prove a weaker variant of \Cref{conj:acyclic} for \zm matrices $P$ with no more than one 1-entry per column. Namely, we show that for some $\eps>0$, every $P$-free $n\times n$ matrix $A$ with at least $(1-\eps) n^2$ 0-entries contains an $\eps n\times \eps n$ all-0 submatrix. This will be enough to obtain \Cref{thm:perm} when $A$ is very dense or very sparse in terms of 0-entries. For the range in between, we will use the following result of Alon, Fischer, and Newman \cite{AFN}.

\begin{lemma}[Alon, Fischer, Newman] \label{lemma:dense_sparse}
Let $P$ be a \zm matrix. For every $\eps>0$ there is a $\delta>0$ such that every $P$-free $n\times n$ \zm matrix $A$ has a $\delta n\times \delta n$ submatrix $B$ that has either at most $\eps (\delta n)^2$ or at least $(1-\eps)(\delta n)^2$ 0-entries.
\end{lemma}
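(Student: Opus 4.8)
The plan is to deduce the statement from Szemer\'edi's regularity lemma for bipartite graphs together with the counting lemma for \emph{induced} subgraphs: $P$-freeness (in the exact-submatrix sense) forces every regular block to be very sparse or very dense. Regard the rows and columns of $A$ as the two colour classes of a bipartite graph whose biadjacency matrix is $A$, so that ``$A$ contains $P$'' means precisely that $P$ is realised as an induced ordered bipartite pattern.

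First I would fix $\eps>0$ and choose a regularity parameter $\eta=\eta(\eps,P)>0$ small enough for the following embedding fact to hold: if $P$ is $k\times\ell$, if $k$ vertices lie in distinct row classes and $\ell$ vertices lie in distinct column classes, and if all $k\ell$ crossing pairs are $\eta$-regular with $1$-density in the interval $[\eps,1-\eps]$, then the number of $(k+\ell)$-tuples realising the exact pattern $P$ is at least $(\eps/2)^{k\ell}$ times the number of all such tuples, hence positive once $n$ is large. Here $\eta$ only has to be small relative to $\eps$; ordinary regularity suffices, because $\eta$-regularity of a pair controls its density and its co-density at once. Then I would apply the bipartite regularity lemma with parameter $\eta$, taking enough parts that one can select $k$ row classes and $\ell$ column classes all of whose $k\ell$ crossing pairs are $\eta$-regular (possible since at most an $\eta$-fraction of crossing pairs are irregular), obtaining equitable partitions of the rows into $V_1,\dots,V_T$ and of the columns into $W_1,\dots,W_T$ with $T=T(\eta)$.

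Set $\delta=1/T$ (the statement is vacuous for $n<T$, so assume $n$ large). Suppose for contradiction that no $\delta n\times\delta n$ submatrix of $A$ has at most $\eps(\delta n)^2$ or at least $(1-\eps)(\delta n)^2$ zero-entries. Since each block $A[V_i\times W_j]$ is a $\delta n\times\delta n$ submatrix, every block then has $1$-density strictly between $\eps$ and $1-\eps$. Choosing $k$ row classes and $\ell$ column classes with all crossing pairs $\eta$-regular, the corresponding $k\ell$ blocks all have density in $(\eps,1-\eps)$, so by the choice of $\eta$ there are rows $a_1,\dots,a_k$ and columns $b_1,\dots,b_\ell$ with $A[\{a_1,\dots,a_k\}\times\{b_1,\dots,b_\ell\}]=P$ --- contradicting $P$-freeness. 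Hence some block is the desired submatrix $B$.

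I expect the main obstacle to be the induced counting/embedding step: one must be certain that the \emph{exact} \zm pattern $P$ --- not merely the bipartite graph underlying $P$ up to adding or deleting a few edges --- can be embedded from $\eta$-regular blocks whose densities are only known to lie in $[\eps,1-\eps]$. The point is that $\eta$-regularity simultaneously controls density and co-density, so for a typical choice of one chosen row the columns sending a $1$ and the columns sending a $0$ are both linear in number and pseudorandomly spread; processing the $k$ rows in turn keeps these column sets linear, losing only a bounded number of factors, and yields all prescribed $0/1$ entries. A secondary bookkeeping point --- extracting $k$ row classes and $\ell$ column classes with all crossing pairs regular --- is routine for $T$ large relative to $k,\ell,\eta$. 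An alternative, closer to the original argument, is to invoke a strong regularity lemma to obtain the block structure with density control built in, but the route above via ordinary bipartite regularity already suffices once the contradiction hypothesis supplies the density control.
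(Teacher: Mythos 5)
This lemma is not proved in the paper at all: it is quoted from Alon--Fischer--Newman, whose argument is designed to avoid the regularity lemma precisely in order to get polynomial bounds on $\delta$ (the paper only needs the weak qualitative form, so your tower-type $\delta$ would be acceptable here). So the question is only whether your regularity argument is sound, and as written it has one genuine gap: the ordering of the embedded copy. The lemma concerns $P$-freeness in the \emph{submatrix} sense, i.e.\ there must be rows $a_1<\cdots<a_k$ and columns $b_1<\cdots<b_\ell$ with $A(a_i,b_j)=P(i,j)$; you acknowledge this at the start but your embedding step ignores it. The classes of a regularity partition are arbitrary subsets of $[n]$, not intervals, so if you place the $i$'th row of $P$ into the $i$'th of $k$ chosen row classes, the counting lemma only produces tuples whose relative order is uncontrolled (for instance, all rows of one class may precede all rows of another, killing every transversal with the required order). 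What you contradict is therefore \emph{unordered} $P$-freeness, which is a strictly stronger hypothesis than the one in the lemma --- and the distinction between ordered and unordered containment is exactly the crux of this paper, so it cannot be waved away. One cannot fix this by averaging over the $k!\ell!$ role assignments either: the fraction of pattern-realising tuples guaranteed by the counting lemma, roughly $(\eps/2)^{k\ell}$, is far too small to be forced to intersect the order-consistent tuples for some assignment.

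The gap is repairable within your framework, and the repair is worth stating: do not spread the rows of $P$ over $k$ different classes. Under your contradiction hypothesis every block has $1$-density in $(\eps,1-\eps)$ (for blocks larger than $\delta n\times\delta n$ this needs a one-line averaging remark, since the regularity lemma only bounds the number of parts from above), and most blocks are $\eta$-regular, so fix a single $\eta$-regular block $(V,W)$ of medium density. Split $V$ into $k$ consecutive intervals and $W$ into $\ell$ consecutive intervals of equal size; by the standard slicing property, each of the $k\ell$ sub-pairs is $O(k\ell\eta)$-regular with density still in $[\eps-\eta,1-\eps+\eta]$. Now run your induced counting/embedding argument with row $i$ of $P$ assigned to the $i$'th interval of $V$ and column $j$ to the $j$'th interval of $W$: since the host sets are intervals listed in increasing order, any transversal automatically satisfies $a_1<\cdots<a_k$ and $b_1<\cdots<b_\ell$, and you obtain a genuine ordered copy of $P$, contradicting $P$-freeness. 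With this modification (and the minor bookkeeping about equitable part sizes versus $\delta n$) your proof goes through, at the cost of a much worse dependence of $\delta$ on $\eps$ than in the cited result.
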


\Cref{lemma:dense_sparse} is stated in \cite[Lemma~1.6]{AFN} in a much stronger form in a ``removal lemma''-type setting, with strong quantitative bounds on $\delta$. However, this weak corollary already serves our purposes. Also, let us remark that in the graph world, this lemma corresponds to the well known result of R\"odl \cite{R} that for any graph $H$, induced $H$-free graphs cannot have a uniform edge distribution.

\begin{lemma}\label{lemma:epsilon}
Let $P$ be a \zm matrix such that no column of $P$ contains more than one 1-entry. Then there is an $\eps=\eps(P)>0$ such that every $P$-free $n\times n$ \zm matrix $A$ with at least $(1-\eps)n^2$ 0-entries has an $\eps n\times \eps n$ all-0 submatrix.
\end{lemma}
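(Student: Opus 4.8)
The plan is to reduce to the case where $P$ is a single row with a bounded number of $1$-entries, and then use a greedy row-deletion argument. Suppose $P$ is $k\times\ell$, and let $m$ be the maximum number of $1$-entries in any row of $P$; since every column has at most one $1$, the total number of $1$-entries in $P$ is at most $\ell$, but what matters is $m$. First I would observe that a $P$-free matrix is in particular free of the $1\times \ell'$ all-$1$ row where $\ell'$ is chosen so that $P$ embeds into $k$ copies of that row stacked vertically — but this is too lossy. Instead, the right statement is: if a set $R$ of rows has the property that every $k$-subset of $R$ shares a common set of $\ell$ columns on which all entries are $1$ in a way matching the $1$-pattern of $P$, we get a copy of $P$. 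So the strategy is to control how many columns a typical row can be ``$1$-heavy'' on.

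Here is the concrete approach. Set $\eps>0$ small (to be chosen as a function of $k$, $\ell$, $m$). Given a $P$-free $A$ with at least $(1-\eps)n^2$ zero-entries, at most $\eps n^2$ entries are $1$, so at most $\sqrt{\eps}\,n$ rows contain more than $\sqrt{\eps}\,n$ ones; call these rows \emph{bad} and delete them. On the surviving $n' \ge (1-\sqrt\eps)n$ rows, every row has at most $\sqrt\eps\, n$ ones, hence at least $n - \sqrt\eps\, n$ all-zero columns. Now I want to find $\eps n$ of the surviving rows whose \emph{common} zero-column set is still large. The obstruction is that different sparse rows could have disjoint supports, so I cannot just intersect naively. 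To handle this, I would iterate: pick rows one at a time, at each step restricting to the columns that are zero in all rows chosen so far; since each new row kills at most $\sqrt\eps\, n$ columns, after choosing $t$ rows we still have at least $n - t\sqrt\eps\, n$ common zero-columns. Taking $t = \eps n$ (with $\eps \le \tfrac14$, say) leaves at least $n/2 \ge \eps n$ common zero-columns, and we are done — but this ignores $P$-freeness entirely and would ``prove'' something false, so the real argument must instead use $P$-freeness to rule out the configuration that blocks the iteration, namely the existence of many rows whose supports are spread out enough to force a copy of $P$.

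So the honest key step is: among the $n'$ surviving (sparse) rows, either we can greedily find $\eps n$ of them with a common zero-column set of size $\ge \eps n$ (giving the all-$0$ submatrix directly), or the greedy process gets stuck, meaning at some point every remaining row is $1$ on at least one column from a small ``forced'' set of columns accumulated so far. In the stuck case, a pigeonhole on which forced column each row hits, combined with the bound $m$ on the row-support of $P$ and a further pigeonhole over the finitely many shapes a row of $P$ can take, produces $k$ rows and $\ell$ columns realizing the $1$-pattern of $P$ — contradicting $P$-freeness once $\eps$ is small enough that the ``forced'' set is still much larger than $\ell$ but the number of surviving rows hitting it is $\gg k\cdot(\text{number of column-choices})$. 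The main obstacle I anticipate is making this last step clean: one must track the forced column set carefully so that when a row is $1$ on a forced column, that column was ``forced'' by a specific earlier row being $1$ there, and then assemble $k$ such earlier rows plus their witness columns into a genuine copy of $P$ respecting the order of rows and columns; getting the quantitative choice of $\eps$ to survive all the nested pigeonholes (depending only on $P$, as required) is the delicate part, and the hypothesis that no column has two $1$'s is exactly what makes the column-forcing bookkeeping tractable.
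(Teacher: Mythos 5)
Your proposal correctly identifies the opening move (delete the $\lesssim\sqrt\eps n$ rows that are too dense in $1$'s), and you are right to note that a naive ``intersect zero-columns greedily'' argument cannot work without invoking $P$-freeness. But the proof has a genuine gap at the decisive step, and you flag it yourself: the ``stuck case'' analysis is never made concrete, and in particular nothing in the sketch explains how the pigeonholing produces a copy of $P$ with the \emph{correct ordering} of rows and columns. Remember that $P$ is an ordered pattern: you need rows $r_1<\dots<r_{k-1}$ and columns $c_1<\dots<c_\ell$ with $A(r_i,c_j)=P(i,j)$ for all $i,j$. Collecting ``$k$ earlier rows that were each $1$ on some forced column'' gives you rows and witness columns, but you have no control over whether these columns appear in the right left-to-right order to match the column-pattern of $P$, nor whether the $1$'s land in the rows in the required top-to-bottom order. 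For the checkerboard-type patterns (e.g. $\bigl(\begin{smallmatrix}1&0\\0&1\end{smallmatrix}\bigr)$) this ordering constraint is the entire difficulty of the problem, and an unordered pigeonhole simply does not see it. (There is also a minor arithmetic slip in the ``naive'' paragraph --- after $t=\eps n$ greedy steps you have $n-\eps^{3/2}n^2$ common zero-columns, which is negative --- but you already discard that route, so it does not matter.)

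The idea that resolves the ordering issue, which is missing from your proposal, is to first replace $P$ by the universal pattern $R$: the $k\times k\ell$ matrix obtained by concatenating $\ell$ copies of the $k\times k$ identity matrix $I_k$. Every $P$ with at most one $1$ per column embeds in $R$ (already in its first $k-1$ rows), so it suffices to find $R$. The point of $R$ is that the column pattern repeats in $\ell$ disjoint horizontal blocks, so one can fix a single set of $k$ rows and then look for a copy of $I_k$ in $\ell$ pairwise disjoint column intervals --- and then the column ordering comes for free, since the intervals are consecutive and disjoint. Concretely, the paper partitions $A[[m]\times[n]]$ (with $m=n/s$ for a suitable $s$ depending only on $P$) into $s$ column blocks $A_1,\dots,A_s$, lets $T_i$ be the family of $k$-subsets of $[m]$ that support a copy of $I_k$ in $A_i$, shows $|T_i|\ge\frac12\bigl(\frac m{2k}\bigr)^k$ using both the absence of an $\eps n\times\eps n$ all-$0$ submatrix (to get many independent $1$'s on the diagonal blocks of each $A_i$) and the $\le\eps n^2$ bound on the total number of $1$'s (to subtract the bad configurations), and then double-counts: if no $k$-set $S$ lies in $\ell$ of the $T_i$, then $\sum_i|T_i|\le(\ell-1)\binom mk$, contradicting the lower bound once $s$ is large enough. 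That counting step is the clean replacement for the nested pigeonholes you were worried about, and the passage to $R$ is what makes the bookkeeping tractable --- not, as you hoped, the at-most-one-$1$-per-column hypothesis alone.
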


\begin{proof}
Suppose $P$ has $k-1$ rows and $\ell$ columns. Let $I$ be the $k\times k$ identity matrix, and let $R$ be the $k\times (k\ell)$ matrix that is the concatenation of $\ell$ copies of $I$, i.e., $R(i,i+jk)=1$ for every $i=1,\dots,k$ and $j=0,\dots,\ell-1$, and all other entries of $R$ are 0. It is easy to see that $R$ contains every $(k-1)\times \ell$ matrix with at most one 1-entry per column as a submatrix.\footnote{In fact, they are already contained in the first $k-1$ rows of $R$. We use $R$ for the sake of a simpler presentation.} In particular, every $P$-free matrix is also $R$-free, so it is enough to prove our theorem for $R$ instead of $P$.

Let $s=2(\ell-1)(2k)^k$, $m=\frac{n}{s}$ and $\eps=\frac{1}{8s^{2}k^{2}}$. We will show that if $A$ contains at least $(1-\eps)n^2$ 0-entries but does not have an $\eps n\times \eps n$ all-0 submatrix, then $A$ contains $R$ as a submatrix.

Let us split the first $m$ rows of $A$ into $m\times m$ submatrices $A_i=A\big[[m]\times [(i-1)m+1,im]\big]$ for $i\in [s]$.  Let $T_i$ be the family of $k$-element sets $S\subs [m]$ such that $A_i$ contains a copy of $I$ in the rows indexed by $S$.

\begin{claim} \label{claim:countI}
 $|T_{i}|\geq \frac{1}{2}\left(\frac{m}{2k}\right)^{k}$ for every $i\in [s]$.
\end{claim}

\begin{proof}
Let us consider the matrices
\[ A_{i,j}=A_{i}\left[\left[\frac{(j-1)m}{k}+1,\frac{jm}{k}\right]\times\left[\frac{(j-1)m}{k}+1,\frac{jm}{k}\right]\right] \]
for every $j\in k$. Then $A_{i,1},\dots,A_{i,k}$ are $\frac{m}{k}\times \frac{m}{k}$ submatrices along the diagonal of $A_{i}$.
	
As $\frac{m}{2k}\ge \sqrt{\eps} n$, we know that $A_i$ does not have any $\frac{m}{2k}\times \frac{m}{2k}$ all-0 submatrix. This easily implies that in each $A_{i,j}$, there are at least $\frac{m}{2k}$ 1-entries such that no two share a row or a column. Let $S_{i,j}$ be the set of coordinates of these $\frac{m}{2k}$ 1-entries.

Let us pick an element $(x_{j},y_{j})\in S_{i,j}$ for every $j=1,\dots, k$ (so one 1-entry from each $A_{i,j}$), and consider the $k\times k$ submatrix $B=A_i[\{x_1,\dots,x_k\}\times \{y_1,\dots,y_k\}]$. There are $(\frac{m}{2k})^k$ such submatrices $B$. Also, $B$ has 1-entries in the diagonal, so $B=I$, unless there is another 1-entry in $B$. However, each such 1-entry of $A_i$ can appear in at most $(\frac{m}{2k})^{k-2}$ matrices $B$, because it fixes the choice of $(x_j,y_j)$ for two $j$'s: if $A_i(x,y)=1$, then the 1-entry at $(x,y)$ can only appear in matrices $B$ for which $x_{a}=x$ for $a=\lceil x/k\rceil$ and $y_b=y$ for $b=\lceil y/k\rceil$. As there are at most $\eps n^2$ 1-entries in $A$, we are left with at least
\[ \left(\frac{m}{2k}\right)^{k} - \eps n^2 \left(\frac{m}{2k}\right)^{k-2} =
   \left(\frac{m}{2k}\right)^{k} - \frac{m^2}{8k^2} \left(\frac{m}{2k}\right)^{k-2} =
   \frac{1}{2}\left(\frac{m}{2k}\right)^{k} \]
choices where $B=I$.
\end{proof}

Suppose that $A$ does not contain $R$ as a submatrix. Then every $k$-element set $S\subs [m]$ can appear in at most $\ell-1$ of the sets  $T_1,\dots, T_s$. Indeed, if $S\in T_{i_1}\cap T_{i_2}\cap\dots\cap T_{i_\ell}$, then $A$ contains $R$ as a submatrix in the rows induced by $S$.

Together with \Cref{claim:countI}, this gives
\[ \frac{s}{2}\left(\frac{m}{2k}\right)^{k} \le  \sum_{i=1}^{s}|T_{i}|\le (\ell-1)\binom{m}{k} < (\ell-1)m^{k}. \]
This contradicts our choice of $s$.
\end{proof}

\begin{proof}[Proof of \Cref{thm:perm}]
By \Cref{lemma:epsilon}, there is an $\eps>0$ such that any $P$-free $n\times n$ \zm matrix with at least $(1-\eps)n^2$ 0-entries contains an $\eps n\times \eps n$ all-0 submatrix. We can apply \Cref{lemma:dense_sparse} with this $\eps$ to get some $\delta>0$ such that any $P$-free $n\times n$ \zm matrix has a $\delta n\times \delta n$ submatrix $B$ with at least $(1-\eps)(\delta n)^2$  entries that are all 0 or all 1.

Let $A$ be an $n\times n$ \zm matrix that is both $P$-free and $P^{c}$-free, and let $B$ be the $\delta n\times \delta n$ submatrix with at least $(1-\eps)(\delta n)^2$ equal entries. If these entries are all 0, then $B$ contains an $\eps\delta n\times \eps\delta n$ all-0 submatrix because it is $P$-free. Otherwise, $B^{c}$ is a $P$-free matrix with at least $(1-\eps)(\delta n)^2$ 0-entries, so $B$ contains an $\eps\delta n\times \eps\delta n$ all-1 submatrix.
\end{proof}

\section{Unordered matrices--Proof of \Cref{thm:graph}}\label{sec:graph}

In this section, we prove \Cref{thm:graph}. Again, we show that if an unordered $P$-free matrix has a positive density of 0-entries, then it contains a linear-size all-0 submatrix. 

\begin{lemma} \label{lemma:unordered}
	Let $P$ be a simple $2\times k$ \zm matrix. Then every \emph{unordered} $P$-free $n\times n$ \zm matrix with at least $\eps n^2$ 0-entries contains an $\frac{\eps n}{6k}\times \frac{\eps n}{6k}$ all-0 submatrix.
\end{lemma}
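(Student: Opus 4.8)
The plan is to pass to a canonical forbidden matrix, rephrase everything set-theoretically, and then run a pigeonhole argument fuelled by two ``good configurations''.

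In the biadjacency language a simple $2\times k$ matrix is a bipartite graph on two ``row vertices'' and $k$ ``column vertices'' in which at most one column sees both rows, at most one sees neither, and every remaining column sees exactly one of the two rows; hence $P$ is an unordered submatrix of $H_{s,t}^{*}$ for some $s,t\le k$, the union of two stars of sizes $s,t$ glued at a leaf together with an isolated vertex. Since a matrix containing $H_{s,t}^{*}$ also contains $P$, being unordered $P$-free implies being unordered $H_{s,t}^{*}$-free, so I may assume $P=H_{s,t}^{*}$. For a row $r$ write $Z_r\subs[n]$ for the set of columns in which $r$ has a $0$; then $\sum_r|Z_r|\ge\eps n^2$ and the goal is to find $m:=\frac{\eps n}{6k}$ rows whose sets $Z_r$ share a common subset of size $\ge m$. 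A copy of $H_{s,t}^{*}$ on rows $a,b$ needs only a column of type $\binom11$, one of type $\binom00$, at least $s-1$ of type $\binom10$ and at least $t-1$ of type $\binom01$, and reading this off yields the dichotomy: for all $a\ne b$, either $Z_a\cup Z_b=[n]$, or $Z_a\cap Z_b=\emptyset$, or $\min(|Z_a\setminus Z_b|,|Z_b\setminus Z_a|)<k$.

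The two good configurations are: \textbf{(i)} if some row $a^{*}$ with $|Z_{a^{*}}|\ge m(k+1)$ has at least $m$ rows $a$ with $|Z_{a^{*}}\setminus Z_a|<k$, then those $m$ rows have a common zero-set containing $Z_{a^{*}}\setminus\bigcup_a(Z_{a^{*}}\setminus Z_a)$, of size $>|Z_{a^{*}}|-mk\ge m$; \textbf{(ii)} if some row $a^{*}$ with $|Z_{a^{*}}|\le n-m$ has at least $m$ rows $a$ with $Z_a\cup Z_{a^{*}}=[n]$, then every such $Z_a$ contains the $\ge m$-element set $[n]\setminus Z_{a^{*}}$. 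In either case we obtain the desired $m\times m$ all-$0$ submatrix. To get rid of the ``$Z_a\cap Z_b=\emptyset$'' branch I first pass to a subproblem: counting the pairs $(r,c)$ with $A(r,c)=0$, $|Z_r|\ge\frac{\eps n}{3}$ and column $c$ carrying at least $\frac{\eps n}{3}$ zeros gives at least $\frac{\eps n^2}{3}$ of them, so some column $c^{*}$ lies in $\ge\frac{\eps n}{3}$ such pairs; let $R$ be the corresponding set of $\ge\frac{\eps n}{3}$ rows, each with $\ge\frac{\eps n}{3}$ zeros and a $0$ in $c^{*}$. Within $R$ the disjoint branch is impossible. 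Split $R$ into $R^{\mathrm{mid}}$ (rows with $\le n-m$ zeros) and $R^{\mathrm{top}}$ (rows with $>n-m$ zeros). On $R^{\mathrm{mid}}$ both good cases are applicable to every row, so if neither triggers then each of the two surviving pair-types accounts for fewer than $m\,|R^{\mathrm{mid}}|$ pairs, whence $\binom{|R^{\mathrm{mid}}|}{2}<2m\,|R^{\mathrm{mid}}|$ and $|R^{\mathrm{mid}}|<4m$; since $|R|\ge 2km$, this forces $R^{\mathrm{top}}$ to be linear in $n$ (for small $k$ one falls back on Theorems~\ref{thm:1} and~\ref{thm:2}, where $P$ has a homogeneous column or equals $Q$).

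It remains to handle the nearly-all-zero rows $R^{\mathrm{top}}$, which is the crux. Good case (i) applies to them with plenty of slack, since their zero-sets are almost all of $[n]$; so we may assume it fails on $R^{\mathrm{top}}$, and then the dichotomy forces almost all pairs of $R^{\mathrm{top}}$ to satisfy $Z_a\cup Z_b=[n]$, i.e.\ the $1$-supports $N_a:=[n]\setminus Z_a$ are pairwise disjoint; discarding the at most $m$ rows with a tiny $N_a$ (which would again finish us), the remaining $N_a$ are small and, by the dichotomy, pairwise near-disjoint or near-nested --- a family that is laminar up to $k$ errors. The goal is then to extract either $m$ of these sets with union $\le n-m$, or a ``cluster'' of $\ge m$ of them near-contained in a single set of size $<m$; one shows that the maximal near-classes of the family are genuinely pairwise disjoint, and a convexity/pigeonhole argument comparing the class sizes with their total usage of $[n]$ yields one of the two alternatives. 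I expect this last step --- squeezing a large homogeneous block out of the near-laminar structure of the sparse rows --- to be the main obstacle, the rest being bookkeeping.
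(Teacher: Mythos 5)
Your set-up (canonical $H_{s,t}^{*}$, the zero-sets $Z_r$, the three-way dichotomy, and the preprocessing that picks a heavy column to kill the ``disjoint'' branch and leaves a set $R$ of roughly $\eps n/3$ rows, each with a common zero column) matches the paper's opening moves, and your two ``good configurations'' are sound. The $R^{\mathrm{mid}}$ counting argument also works. But the proposal has a genuine gap exactly where you flag one: you never actually extract an $m\times m$ all-$0$ block from $R^{\mathrm{top}}$. The difficulty is real. After assuming case (i) fails on $R^{\mathrm{top}}$, you only know that for each row $a^{*}$ fewer than $m$ rows $a$ satisfy $|Z_{a^{*}}\setminus Z_a|<k$; the remaining pairs are a mix of ``union $=[n]$'' (so the $1$-supports $N_a$ are disjoint for that pair) and pairs where the difference is small in the \emph{other} direction, $|Z_a\setminus Z_{a^{*}}|<k$, which does not imply the $N_a$ are nested the way you want. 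The resulting family of small sets $N_a$ is not laminar, and it is not clear that one can always pull out $m$ of them with small union, or $m$ of them near-contained in a fixed set of size $<m$. (A concrete worry: nothing prevents the $N_a$ from being, say, $m$-element sets forming a sunflower-like pattern with a large core, which satisfies all pairwise constraints without giving either alternative.) Your sketch ``one shows the maximal near-classes are genuinely disjoint, and a convexity/pigeonhole argument...'' is precisely the step that needs a proof, and no workable mechanism is offered.

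The paper avoids the $R^{\mathrm{mid}}/R^{\mathrm{top}}$ split altogether and handles all the selected rows uniformly. It orders the rows by $|Z_r|$, puts a directed edge $i\to j$ (for $i<j$) whenever $|Z_i\setminus Z_j|<k$, and observes that non-edges force $Z_i\cup Z_j=[n]$. The key structural move is to look at the \emph{minimal} vertices $M$ of this acyclic digraph: they are pairwise non-adjacent, hence their $1$-supports $\overline{Z}_v$ are pairwise \emph{disjoint} (not merely near-disjoint). Assigning every row to a minimal vertex reaching it, one picks a subset $N\subseteq M$ covering a third of the rows while keeping $\bigl|\bigcup_{v\in N}\overline{Z}_v\bigr|\le n-t$, and then a short induction along the assigned rows shows that the common zero-set shrinks by at most $k-1$ per added row, yielding the $m\times m$ block. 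This minimal-vertex/assignment device is the idea your proposal is missing; the ``near-laminar'' structure you try to analyze directly is precisely what it sidesteps. Also, your fallback for $k\le 2$ via \Cref{thm:1,thm:2} does not obviously cover all simple $2\times 2$ patterns in the unordered setting (e.g.\ an unordered copy of $\begin{pmatrix}1&0\\1&0\end{pmatrix}$), so that corner would need its own argument.
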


\begin{proof}
	Let $R$ be the $2\times (2k+2)$ matrix whose first $k$ columns are $\begin{pmatrix}1\\0\end{pmatrix}$, the next $k$ columns are $\begin{pmatrix}0\\1\end{pmatrix}$, the $(2k+1)$'st column is $\begin{pmatrix}1\\1\end{pmatrix}$, and the last column is $\begin{pmatrix}0\\0\end{pmatrix}$. Then $R$ contains an ordering of the columns of $P$, so it is enough to prove our result for $R$ instead of $P$.
	
	Let $A'$ be the matrix obtained from $A$ by deleting the rows with fewer than $\eps n/2$ 0-entries. At most $\eps n^2/2$ 0's are deleted, so $A'$ contains at least $\eps n^2/2$ 0-entries. Hence, one can find a column in $A'$ with $t=\lceil\eps n/2\rceil$  0 entries. Let $B$ be the $t\times n$ submatrix of $A$ induced by the rows of these 0-entries.
	
	By permuting rows and columns if necessary, we may assume that these 0-entries form an all-0 last column in $B$, and that the rows of $B$ are in increasing order according to the number of 0-entries in them.
	
	For $i\in[t]$, let $H_{i}$ denote the set of indices $j\in [n]$ such that $B(i,j)=0$. Define the directed graph $G$ on vertex set $[t]$ by adding $(i,j)$ as an edge if $i<j$ and $|H_{i}\setminus H_{j}|\le k-1$. Then $G$ is an acyclic directed graph.
	
	Note that if $(i,j)$ is not an edge of $G$ for some $i<j$, then we must have $H_{i}\cup H_{j}=[n]$. Indeed, if $r\in [n]\setminus (H_{i}\cup H_{j})$, then $B[\{i,j\}\times \{r\}]=\begin{pmatrix}1\\1\end{pmatrix}$.
	We also have $|H_{i}\setminus H_{j}|\ge k$, which further implies $|H_{j}\setminus H_{i}|\ge k$ because $|H_{i}|\le |H_{j}|$. Therefore, if $X$ is a $k$-element subset of $H_{i}\setminus H_{j}$, and $Y$ is a $k$-element subset of $H_{j}\setminus H_{i}$, then $B[\{i,j\}\times (X\cup Y\cup\{r,n\})]$ is a reordering of $R$, contradicting our assumption.
	
	\medskip
	For a set $Z\subs [n]$, we denote its complement by $\overline{Z}=[n]\setminus Z$. Let $M$ be the set of minimal vertices in $G$, that is, the set of vertices $v$ such that no edge points towards $v$. Then the sets $\overline{H}_{v}$ are pairwise disjoint for $v\in M$. Every element $w\in [t]$ can be reached from a minimal vertex via a directed path. Let us assign each $w$ to the one such vertex in $M$ with the smallest label in $[t]$.
	
	Now we will show that there is a subset $N\subs M$ such that $|\bigcup_{v\in N} \overline{H}_{v}|\leq n-t$ and at least $t/3$ of the elements in $[t]$ are assigned to vertices in $N$. Note that by the construction of $B$, we have $|H_{i}|\ge t$ and hence $|\overline{H_{i}}|\leq n-t$ for every $i\in [t]$. If $M$ contains a vertex $v$ that is assigned to more than $t/3$ elements of $[t]$, then we are done, as we can take $N=\{v\}$. So we may assume that there is no such vertex. Starting with $N_{0}=\emptyset$, add vertices of $M$ one by one to $N_{0}$ until the number of elements assigned to the vertices in $N_{0}$ is at least $t/3$. At this point, the number of elements assigned to $N_{0}$ is between $t/3$ and $2t/3$. If $|\bigcup_{v\in N_{0}}\overline{H}_{v}|\leq n-t$, then set $N=N_{0}$, otherwise, set $N=M\setminus N_{0}$. As $t=\lceil \eps n/2\rceil\leq \lceil n/2\rceil$, we must have $|\bigcup_{v\in N}\overline{H}_{v}|\le n - |\bigcup_{v\in N_{0}}\overline{H}_{v}| \le t-1 \le n-t$. The number of elements assigned to an element of $N$ is at least $t/3$ in both cases.
	
	\medskip
	Now let $x_{1}<\dots<x_{s}$ be the elements of $[t]$ assigned to $N$, so $s\geq t/3$. Also, for $X=\bigcap_{v\in N} H_{v}$, we have $|X|\geq t$. We show by induction on $\ell$ that $|X\cap \bigcap_{i=1}^{\ell}H_{x_i}|\geq |X|-(\ell-1)(k-1)$. If $\ell=1$, then $x_{1}$ is a minimal element, so $X\subs H_{x_{1}}$, and we are done. Now suppose that $\ell>1$. If $x_{\ell}\in N$, then $X\subs H_{x_{\ell}}$, so
	\[ \left|X\cap \bigcap_{i=1}^{\ell}H_{x_{i}}\right|=\left|X\cap \bigcap_{i=1}^{\ell-1}H_{x_{i}}\right|\geq |X|-(\ell-2)(k-1)\geq |X|-(\ell-1)(k-1),\]
	and we are done. If $x_{\ell}\not\in N$, then $G$ must contain an edge $(x_{\ell'},x_{\ell})$ for some $1\leq \ell'<\ell$. Indeed, if $x_{\ell}$ is assigned to $v\in N$, then all other vertices on a $v$-$x_{\ell}$ directed path are assigned to $v$, as well. Now we can use $|H_{x_{\ell'}}\setminus H_{x_{\ell}}|\le k-1$, and hence $|(X\cap \bigcap_{i=1}^{\ell-1}H_{x_{i}})\setminus H_{x_{\ell}}|\leq k-1$, to get
	\[ \left|X\cap \bigcap_{i=1}^{\ell}H_{x_{i}}\right|\geq \left|X\cap \bigcap_{i=1}^{\ell-1}H_{x_{i}}\right|-(k-1)\geq |X|-(\ell-1)(k-1). \]
	
	Fix $\ell=\min\{\frac{t}{2(k-1)},\frac{t}{3}\}$ (for $k=1$, take $\ell=\frac{t}{3}$). Then $|\bigcap_{i=1}^{\ell}H_{x_{i}}|\geq |X|-(\ell-1)(k-1)\geq t/2$, so the submatrix of $B$ induced by the rows $\{x_{1},\dots,x_{\ell}\}$ and columns $\bigcap_{i=1}^{\ell}H_{x_{i}}$ is an all-0 matrix with at least $\min\{\frac{t}{2(k-1)},\frac{t}{3}\}\ge \frac{\eps n}{6k}$ rows, and at least $\frac{t}{2}\ge \frac{\eps n}{4}$ columns. This finishes the proof.
\end{proof}

\begin{proof}[Proof of \Cref{thm:graph}]
	If $A$ has at least $\frac{n^{2}}{2}$ 0-entries, we can find a $\frac{n}{12k}\times \frac{n}{12k}$ all-0 submatrix in $A$ by the previous lemma. Otherwise, we can apply \Cref{lemma:unordered} to $A^{c}$ to show that $A$ contains a $\frac{n}{12k}\times \frac{n}{12k}$ all-1 submatrix.
\end{proof}

\Cref{lemma:unordered} shows that there is a genuine difference between the ordered and unordered case of our problem. Indeed, this result shows that $\eps n^2$ 0-entries in an \emph{unordered} $P$-free matrix guarantee an $\Omega(\eps n)\times \Omega(\eps n)$ all-0 submatrix. However, as we discussed at the end of \Cref{sec:chessboard}, this is not true for every $2\times k$ matrix $P$ in the ordered setting: there are $P$-free matrices with $\eps n^2$ 0-entries that do not have any all-0 submatrix of size $\Omega(\frac{\eps}{\alpha(1/\eps)}n)$.

By a result of F\"uredi \cite{F}, there is an $n\times n$ matrix $A$ with $\Theta(n\log n)$ 0-entries that does not contain $\begin{pmatrix}0&*&0\\ 0&0&*\end{pmatrix}$ (where $*$ can be either 1 or 0). With the same methods as before, we can use this to construct $n\times n$ matrices $A$ with $\eps n^2$ 0-entries that do not contain $\begin{pmatrix}0&1&0\\0&0&1\end{pmatrix}$ and have no all-0 submatrices of size $\Omega(\frac{\eps}{\log 1/\eps}n)$.

\section{Applications} \label{sec:applications}

Several matrix classes can be described by a finite set of forbidden submatrices (see, e.g., \cite{KRW95}), and our results show that in many cases they contain large homogeneous submatrices. We give three specific applications.

\subsection{Chordal bipartite graphs and totally balanced matrices} \label{sec:chordal}

A \zm matrix is \emph{totally balanced} if it does not contain any submatrix, whose columns are different and which has exactly two 1-entries in each of its rows and columns. In other words, none of its submatrices is the incidence matrix of a cycle of length at least 3.

Totally balanced matrices (first studied by Lov\'asz \cite{L68} in connection with a hypergraph coloring problem) are well-examined objects in combinatorial optimization. Their importance comes from the fact that integer programs with totally balanced coefficient matrices can be easily solved. Indeed, the optimization problem can be solved greedily if the coefficient matrix does not contain $\Gamma=\begin{pmatrix} 1&1\\1&0\end{pmatrix}$ as a submatrix, and as was shown in \cite{AF84,HKS85,L87}, a matrix is totally balanced if and only if its rows and columns can be rearranged to get a $\Gamma$-free matrix. (For more on optimization properties of balanced matrices, see the book of Berge \cite{BBOOK}.) As rearranging rows and columns does not affect homogeneous submatrices, \Cref{thm:2} shows that totally balanced matrices have large homogeneous submatrices.

\begin{corollary}
Every totally balanced $n\times n$ matrix contains an $cn \times cn$ homogeneous submatrix with some $c\ge 1/20$.
\end{corollary}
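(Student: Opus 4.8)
The plan is to combine the known structure theorem for totally balanced matrices with Theorem~\ref{thm:2}. The key fact, due to Anstee--Farber, Hoffman--Kolen--Sakarovitch, and Lubiw \cite{AF84,HKS85,L87}, is that a \zm matrix $M$ is totally balanced if and only if its rows and columns can be permuted so that the resulting matrix $M'$ contains no copy of $\Gamma=\begin{pmatrix}1&1\\1&0\end{pmatrix}$ as a submatrix. So the first step is to invoke this equivalence: given a totally balanced $n\times n$ matrix $M$, fix such a permutation and pass to the $\Gamma$-free matrix $M'$.

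The second step is to observe that $\Gamma$ is, up to rotation and taking complements, the same forbidden pattern as $Q=\begin{pmatrix}1&0\\0&0\end{pmatrix}$ from Theorem~\ref{thm:2}: reflecting $\Gamma$ across its vertical axis gives $\begin{pmatrix}1&1\\0&1\end{pmatrix}$, and taking the complement of that gives exactly $Q$. Since forbidding a submatrix is preserved under reflecting rows/columns and under complementation (the complement of a $\Gamma$-free matrix is $\Gamma^c$-free), the matrix $M'$ after a suitable further reflection and/or complementation becomes a $Q$-free $n\times n$ \zm matrix $A$. Theorem~\ref{thm:2} then furnishes a homogeneous $cn\times cn$ submatrix of $A$ with $c>1/20$. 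Finally, since permuting rows and columns, reflecting, and complementing all carry homogeneous submatrices to homogeneous submatrices of the same size, this corresponds to a homogeneous $cn\times cn$ submatrix of the original matrix $M$, with $c\ge 1/20$.

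There is essentially no obstacle here: the corollary is a direct translation. The only point that needs a little care is the bookkeeping of symmetries — making sure that the sequence of operations (permute rows/columns via the structure theorem, then reflect, then possibly complement) actually turns a totally balanced matrix into a $Q$-free matrix, and that each operation in the chain genuinely preserves both $\Gamma$-freeness (in the appropriate transformed sense) and the size of the largest homogeneous submatrix. Once this is checked, the statement follows immediately from Theorem~\ref{thm:2}.
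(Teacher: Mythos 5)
Your approach is correct and is exactly what the paper does: invoke the Anstee--Farber/Hoffman--Kolen--Sakarovitch/Lubiw characterization to permute $M$ into a $\Gamma$-free matrix, observe that $\Gamma$ and $Q$ are related by the symmetries that preserve the problem, and apply \Cref{thm:2}. One small arithmetic slip in your bookkeeping: reflecting $\Gamma=\begin{pmatrix}1&1\\1&0\end{pmatrix}$ across its vertical axis gives $\begin{pmatrix}1&1\\0&1\end{pmatrix}$, whose complement is $\begin{pmatrix}0&0\\1&0\end{pmatrix}$, not $Q$. The correct chain is, e.g., a $180^\circ$ rotation (reversing both row and column order) followed by complementation, which sends $\Gamma$ to $Q$; alternatively, note that $\begin{pmatrix}0&0\\1&0\end{pmatrix}$ is itself $Q$ up to one more reflection, so the argument still goes through. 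This does not affect the validity of the proof.
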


A \emph{chordal bipartite graph} is a bipartite graph with no induced cycle of length greater than 4. This class of graphs was introduced by Golumbic and Goss \cite{GG78} as a bipartite analog to chordal graphs, with similar perfect elimination properties. Clearly, a bipartite graph is chordal if and only if its adjacency matrix is totally balanced. This immediately implies the following.

\begin{corollary}
Every chordal bipartite graph $G=(A\cup B,E)$ with parts of size $n$ contains sets $A'\subs A$ and $B'\subs B$ of size $cn$, for some constant $c>0$, such that $G[A',B']$ is either empty or complete.
\end{corollary}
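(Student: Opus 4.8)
The plan is to deduce this corollary directly from the preceding one on totally balanced matrices. First I would pass to the biadjacency matrix $M$ of $G$ (the $n\times n$ \zm matrix with rows indexed by $A$, columns indexed by $B$, and $M(a,b)=1$ exactly when $ab\in E$). As observed in the text above, $G$ is chordal bipartite if and only if $M$ is totally balanced; the translation is straightforward to check directly, since an induced cycle $a_1b_1a_2b_2\cdots a_\ell b_\ell$ of length $2\ell\ge 6$ in $G$ corresponds to the submatrix $M[\{a_1,\dots,a_\ell\}\times\{b_1,\dots,b_\ell\}]$, which has all columns distinct (as $\ell\ge 3$) and exactly two $1$-entries in each row and column (the chordlessness of the cycle rules out further $1$-entries), and conversely any submatrix of this kind is the incidence matrix of a cycle of length $\ge 3$ and lifts back to a chordless cycle of length $\ge 6$ in $G$.

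Next I would apply the previous corollary to $M$, which is an $n\times n$ totally balanced matrix: it contains a homogeneous $cn\times cn$ submatrix $M[A'\times B']$, with $A'\subs A$, $B'\subs B$, $|A'|=|B'|=cn$ and $c\ge 1/20$. If $M[A'\times B']$ is the all-$1$ matrix, then $ab\in E$ for all $a\in A'$ and $b\in B'$, so $G[A',B']$ is complete; if it is the all-$0$ matrix, then no edges run between $A'$ and $B'$, so $G[A',B']$ is empty. This is precisely the asserted conclusion, with the same constant $c$.

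No genuine difficulty arises; the entire content sits in \Cref{thm:2} (used through the totally balanced corollary). The only detail to handle with some care is the correspondence between the forbidden ``cycle incidence'' submatrices in the definition of total balancedness and chordless cycles of length $\ge 6$ in $G$ — in particular, noting that the $C_4$'s of a bipartite graph cause no trouble, since their biadjacency submatrix is $\begin{pmatrix}1&1\\1&1\end{pmatrix}$, whose columns coincide and which therefore does not violate total balancedness, consistently with $C_4$ being permitted in a chordal bipartite graph.
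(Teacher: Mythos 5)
Your argument is correct and is exactly the paper's intended proof: pass to the biadjacency matrix, invoke the equivalence between chordal bipartite graphs and totally balanced matrices, and apply the preceding corollary on totally balanced matrices. The paper leaves the equivalence as an unproved "Clearly"; your careful check of the cycle/submatrix correspondence (including why induced $C_4$'s are harmless because they produce coinciding columns) is a correct fleshing-out of that step.
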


\subsection{The Erd\H{o}s-Hajnal conjecture and intersection graphs} \label{sec:geometry}
A family $\mathcal{G}$ of graphs is said to have the \emph{Erd\H{o}s-Hajnal property}, if there is a constant $c$ such that each member $G\in\mathcal{G}$ contains a clique or an independent set on at least $|V(G)|^{c}$ vertices. The family $\mathcal{G}$ has the \emph{strong Erd\H{o}s-Hajnal property}, if there is a constant $c'$ such that every $G\in\mathcal{G}$ satisfies that either $G$ or its complement contains a complete bipartite graph with parts of size $c'|V(G)|$. By a result of Alon, Pach, Pinchasi, Radoi\v{c}i\'c and Sharir \cite{APPRS}, the strong Erd\H{o}s-Hajnal property implies the Erd\H{o}s-Hajnal property in hereditary families. The famous Erd\H{o}s-Hajnal conjecture \cite{EH1,EH2} asserts the following.

\begin{conjecture}[Erd\H{o}s, Hajnal]
	For every graph $H$, the family of graphs not containing an induced copy of $H$ has the Erd\H{o}s-Hajnal property.
\end{conjecture}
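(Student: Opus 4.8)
This is the Erd\H{o}s--Hajnal conjecture, one of the central open problems of the area, so what follows is a program rather than a proof. The plan is to route the conjecture through the \emph{strong} Erd\H{o}s--Hajnal property and the Ramsey-type bootstrapping it enables. By the result of Alon, Pach, Pinchasi, Radoi\v{c}i\'c and Sharir \cite{APPRS} quoted above, it suffices to fix $H$ and show that the hereditary family of induced-$H$-free graphs has the strong Erd\H{o}s--Hajnal property: every such $G$ on $n$ vertices satisfies that $G$ or $\overline{G}$ contains a $K_{t,t}$ with $t=cn$. The first move would be to translate this into a statement about \zm matrices. Pass to a balanced bipartition $V(G)=A\cup B$, let $M$ be the $|A|\times|B|$ biadjacency matrix of the edges between $A$ and $B$, and observe that an induced copy of $H$ straddling $A$ and $B$ is reflected in a forbidden pattern of $M$; forbidding $H$ thus restricts $M$, and one hopes to apply \Cref{thm:1,thm:2,thm:3,thm:perm}, or in the unordered formulation \Cref{thm:graph}, to extract a large all-0 or all-1 submatrix, i.e.\ a large complete or empty bipartite subgraph of $G$.

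Carried out, the steps would be: (i) make precise which submatrix patterns induced-$H$-freeness forbids in $M$ for a suitable choice of bipartition --- automatic when $H$ is bipartite, but far from clear for general $H$; (ii) in the favourable case where the forbidden pattern is \emph{simple}, invoke \Cref{thm:graph} (or the ordered theorems after controlling the row and column orders) to produce $K_{t,t}$ with $t=\Omega(n)$ and hence the strong Erd\H{o}s--Hajnal property, and so the conjecture, for that $H$; (iii) in the remaining case the pattern is not simple, and by \Cref{prop:nonsimple} no linear bound can hold --- here one must abandon the strong property and instead run a R\"odl-type iteration: using R\"odl's theorem \cite{R} (equivalently \Cref{lemma:dense_sparse}) decompose $G$ into a bounded number of parts, almost all pairs of which are $\eps$-sparse or $\eps$-dense, recurse inside a constant fraction of the vertices, and multiply up the clique and independent-set sizes found in the recursion to recover a polynomial bound $n^{c(H)}$.

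The main obstacle --- and the reason the conjecture remains open --- is step (iii). There is no known structural decomposition of general induced-$H$-free graphs powerful enough to make such a recursion terminate with a polynomial bound, and the clean matrix argument of (i)--(ii) provably cannot fill the gap, since the strong Erd\H{o}s--Hajnal property is known to fail whenever $H$ is neither a forest nor the complement of a forest (for instance $H=C_5$). A complete proof would need a genuinely new structural input --- some canonical \emph{grading} of $H$-free graphs into levels that are individually almost homogeneous --- to replace the probabilistic and Ramsey bounds currently available. Absent that, the realistic scope of the present approach is precisely the bipartite, forbidden-submatrix instances handled by \Cref{thm:graph} and the applications in \Cref{sec:chordal,sec:geometry}, where step (ii) goes through.
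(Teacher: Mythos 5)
You are right that this statement is the Erd\H{o}s--Hajnal conjecture itself: the paper states it purely as background in \Cref{sec:geometry} and \Cref{sect:remarks}, proves nothing of the sort, and indeed emphasizes that it ``is still wide open.'' So there is no proof of the authors' to compare yours against, and your submission---explicitly a program---cannot be accepted as a proof. The gap you yourself flag at step (iii) is the entire content of the problem: a R\"odl-type recursion built on \Cref{lemma:dense_sparse} (equivalently on \cite{R}) is essentially the classical argument of Erd\H{o}s and Hajnal \cite{EH2}, and it is only known to produce homogeneous sets of size $e^{c\sqrt{\log n}}$, not $n^{c}$; no refinement of that iteration is known to ``multiply up'' to a polynomial bound for a general $H$.

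Steps (i)--(ii) are also not sound as stated. If $H$ is not bipartite, then \emph{every} bipartite graph is induced-$H$-free, so the biadjacency matrix $M$ of a bipartition of an $H$-free $G$ satisfies no forbidden-submatrix condition at all, and a random $M$ shows no large homogeneous submatrix need exist; the matrix theorems of this paper simply cannot see such $H$. Even when $H$ is bipartite with parts $(X,Y)$, induced-$H$-freeness of $G$ does \emph{not} make $M$ (ordered or unordered) $P$-free for $P$ the biadjacency matrix of $H$: a copy of $P$ in $M$ only certifies the cross-edges between $A$ and $B$, while edges inside $A$ or inside $B$ may ruin it as an induced copy of $H$ in $G$, so the implication you need in step (i) runs the wrong way. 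This is also why the paper's own link to Erd\H{o}s--Hajnal is the reverse and concerns pairs: \Cref{conj:acyclicpair} would imply \Cref{thm:chudnovsky}, the Chudnovsky--Scott--Seymour--Spirkl theorem for graphs excluding both a tree $T$ and $T^{c}$, and says nothing about excluding a single $H$. Your closing observation---that the strong Erd\H{o}s--Hajnal property fails whenever both $H$ and $H^{c}$ contain cycles, so the route through \cite{APPRS} cannot settle the full conjecture---is correct, and it is the honest conclusion: the statement remains open, and neither the paper nor your outline proves it.
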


This conjecture has attracted significant attention in the past decades, but is still wide open. For history and relevant results, we refer the reader to the survey of Chudnovsky \cite{C}.

\medskip
The \emph{intersection graph} of a family of sets $\mathcal{F}$ is the graph with vertex set $\mathcal{F}$, where two vertices are joined by an edge if their intersection is nonempty.  A \emph{curve} in the plane is the image of an injective continuous function $f:[0,1]\rightarrow \mathbb{R}^{2}$.  In this paper, we assume that curves in our collections only meet at proper crossings, that is, if two curves $\alpha$ and $\beta$ share a point in common, then $\alpha$ passes to the other side of $\beta$ at this point. A \emph{string graph} is a graph that is isomorphic to the intersection graph of a family of curves.

In a very recent paper, Tomon \cite{T} showed that the family of string graphs has the Erd\H{o}s-Hajnal property. However, this family does not satisfy the strong Erd\H{o}s-Hajnal property \cite{PT06}, although Fox and Pach \cite{FP2} proved that one can always find a complete bipartite graph of almost linear size in every string graph or its complement.

\begin{theorem}[Fox, Pach]
Let $G$ be a string graph on $n$ vertices. Then either $G$ contains $K_{m,m}$ with $m=\Omega(\frac{n}{\log n})$, or the complement of $G$ contains $K_{m',m'}$ with $m'=\Omega(n)$.
\end{theorem}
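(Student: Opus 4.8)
The plan is to combine a dichotomy on the edge density of $G$ with the separator theorem for string graphs of Fox and Pach: every string graph with $e$ edges admits a balanced separator, i.e.\ a vertex set $S$ and a partition $V(G)\setminus S=A\cup B$ with no edge of $G$ between $A$ and $B$ and $|A|,|B|\le\tfrac{2}{3}|V(G)|$, where $|S|=O(\sqrt{e}\,\mathrm{polylog}\,e)$ (and $|S|=O(\sqrt{e})$ in the sharpest form). Write $n=|V(G)|$ and fix a small constant $\alpha>0$.

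First I would handle the sparse case $e(G)\le \alpha n^2$. Here the separator theorem gives an $S$ with $|S|=O(\sqrt{e(G)})\le n/10$ once $\alpha$ is chosen small enough. Since $|A|,|B|\le \tfrac{2}{3}n$ while $|A|+|B|=n-|S|\ge\tfrac{9}{10}n$, each of $A,B$ contains at least $\tfrac{9}{10}n-\tfrac{2}{3}n=\tfrac{7}{30}n$ vertices, and there is no $G$-edge between them. Hence $\overline{G}$ contains $K_{m',m'}$ with $m'\ge \tfrac{7}{30}n=\Omega(n)$, and we are done in this regime.

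The dense case $e(G)\ge \alpha n^2$ is the substantive one, and here the goal is to find $K_{t,t}$ inside $G$ itself with $t=\Omega(n/\log n)$. The tool is a Kővári–Sós–Turán-type estimate for string graphs, itself a consequence of the separator theorem: a $K_{t,t}$-free string graph on $N$ vertices has at most $c\,t\log t\cdot N$ edges. (One applies the separator theorem recursively; the subgraphs stay within the class, $K_{t,t}$-freeness limits how many edges the small recursion leaves can carry, and an accounting of the edges meeting the separators closes the recursion.) If $G$ contained no $K_{t,t}$ with $t=\delta n/\log n$, this bound would force $e(G)\le c\,\delta n^{2}$, contradicting $e(G)\ge\alpha n^2$ once $\delta$ is small relative to $\alpha$; so $G$ contains the desired $K_{t,t}$. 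Alternatively, one can route this case through the structural result that a dense string graph contains a large incomparability subgraph and then invoke \Cref{thm:poset} with a bounded number of comparability graphs, which similarly yields an almost-linear biclique in $G$ or a linear one in $\overline{G}$.

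I expect the dense case to be the main obstacle: proving the Kővári–Sós–Turán-type edge bound for string graphs with the correct dependence on $t$, and arranging the logarithmic factors (in the separator theorem and in this edge bound) so that the two regimes together yield exactly an $\Omega(n/\log n)\times\Omega(n/\log n)$ complete bipartite subgraph of $G$ and an $\Omega(n)\times\Omega(n)$ one of $\overline{G}$. The asymmetry between the two bounds is intrinsic to this approach: the biclique in the complement is produced by cutting along a sublinear separator and loses nothing, whereas the biclique inside $G$ inherits the $\log n$ loss from the separator/Kővári–Sós–Turán machinery, and string-graph constructions show such a loss is genuinely necessary.
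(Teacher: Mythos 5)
This theorem is quoted in the paper from Fox and Pach's \emph{String graphs and incomparability graphs} (reference \cite{FP2}); the paper gives no proof of its own, so there is nothing internal to compare your argument against. What follows is an assessment of your sketch against the known proof.

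Your two-regime structure (sparse $\Rightarrow$ anticomplete pair via a separator; dense $\Rightarrow$ biclique via a Tur\'an-type bound or via incomparability subgraphs) is the right shape and is in the spirit of the Fox--Pach argument. But the bookkeeping of your first route does not close. With a separator of size $O(\sqrt{e}\,\mathrm{polylog}\,e)$ -- which is what was available to Fox and Pach; the clean $O(\sqrt{e})$ bound of Lee came later -- taking a \emph{constant} density threshold $\alpha$ does not make $|S|\le n/10$: one gets $|S|=O(\sqrt{\alpha}\,n\log n)$, which exceeds $n$ for large $n$. You are forced to set the sparse/dense threshold at roughly $e\approx n^2/\log^2 n$, and then plugging $e\ge \Omega(n^2/\log^2 n)$ into the K\H{o}v\'ari--S\'os--Tur\'an-type bound $e\le c\,t\log t\cdot n$ only yields a biclique of size $t=\Omega(n/\log^3 n)$, not $\Omega(n/\log n)$. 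So the separator-plus-KST combination, as you have arranged it, is off by two logarithmic factors.

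Your second route -- dense string graphs contain a linear-size induced incomparability subgraph, then apply a Ramsey dichotomy for comparability graphs -- is the one closer to the actual proof, but there is a subtlety you elide. \Cref{thm:poset} with $k=1$ gives a \emph{symmetric} bound $n/(\log n)^{1+o(1)}$ for both the comparability graph and its complement; it does not by itself produce the asymmetric conclusion ($\Omega(n)$ anticomplete pair versus $\Omega(n/\log n)$ biclique). What is actually used is the sharper asymmetric dichotomy for a single comparability graph $C$ on $N$ vertices: either $C$ itself contains $K_{m,m}$ with $m=\Omega(N)$, or $\overline{C}$ contains $K_{m,m}$ with $m=\Omega(N/\log N)$. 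Applying this to the complement of the extracted incomparability subgraph is exactly what splits the conclusion into an $\Omega(n)$ anticomplete pair or an $\Omega(n/\log n)$ biclique in $G$. You correctly intuit that the asymmetry is ``intrinsic,'' but it comes from this poset dichotomy, not from the separator machinery alone. In short: the plan identifies the right ingredients, but the first route does not reach $\Omega(n/\log n)$, and the second route needs the asymmetric comparability-graph lemma rather than \Cref{thm:poset}.
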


A collection of curves is \emph{$k$-intersecting}, if any two curves in the collection intersect in at most $k$ points. Fox, Pach and T\'oth \cite{FPT} showed that the family of intersection graphs of $k$-intersecting curves does have the strong Erd\H{o}s-Hajnal property.

\begin{theorem}[Fox, Pach, T\'oth] \label{thm:kintersect}
For every positive integer $k$, there is a constant $c_{k}>0$ such that the following holds. Let $G$ be the intersection graph of a $k$-intersecting family of $n$ curves. Then either $G$ or its complement contains a complete bipartite graph of size $c_{k}n$.
\end{theorem}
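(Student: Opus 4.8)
The plan is to obtain Theorem~\ref{thm:kintersect} from the matrix results of Sections~\ref{sect:chessboard}--\ref{sect:graph}, so that the only genuinely geometric input is a single pattern-avoidance lemma. Let $G$ be the intersection graph of the $k$-intersecting family $\mathcal C=\{\gamma_1,\dots,\gamma_n\}$. It suffices to find disjoint subfamilies $\mathcal A,\mathcal B\subs\mathcal C$ with $|\mathcal A|=|\mathcal B|=\Omega(n)$ whose \emph{crossing matrix} $M$ — the zero-one matrix with $M(a,b)=1$ iff $a$ crosses $b$ — contains an $\Omega(n)\times\Omega(n)$ homogeneous submatrix: an all-$1$ block is a complete bipartite subgraph of $G$, an all-$0$ block is a complete bipartite subgraph of $\overline G$. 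By Theorem~\ref{thm:1}, a homogeneous block of this size exists as soon as (after a suitable ordering of its rows and columns) $M$ is $P$-free for some fixed $2\times\ell$ zero-one matrix $P$ with a $0$ and a $1$ in every column, where $\ell=\ell(k)$. So the whole task reduces to locating linear-size subfamilies whose mutual crossing matrix, appropriately ordered, cannot realize such a $P$.

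To build these subfamilies I would \emph{ground} the curves on a transversal. Choose a curve $\gamma\in\mathcal C$ that is crossed by the largest number $d$ of the others, and let $D$ be this set of crossers; assume for now $d=\Omega(n)$. For $\delta\in D$ record its first crossing point $p(\delta)$ with $\gamma$ in the order along $\gamma$, and cut $\delta$ there, so that $D$ becomes a family of $\Omega(n)$ arcs grounded at distinct points of the arc $\gamma$, still pairwise crossing at most $k$ times. Let $\mathcal A$ consist of the arcs grounded in the first half of $\gamma$ and $\mathcal B$ of those grounded in the second half, each of size $\Omega(n)$, and order the rows and columns of $M=M(\mathcal A,\mathcal B)$ by their grounding points. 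The lemma I would need is topological: since the arcs of $\mathcal A$ and $\mathcal B$ are grounded on a common curve in a separated fashion and any two arcs cross at most $k$ times, $M$ has no $2\times\ell$ submatrix equal to a suitable $P$ without homogeneous columns; otherwise, following how the two curves indexing such a $2\times\ell$ block alternate among the $\ell$ grounding points of its columns — a Davenport--Schinzel-type interchange argument — one would produce more than $k$ crossings between them. Theorem~\ref{thm:1} then yields the homogeneous block, hence $K_{cn,cn}$ in $G$ or $\overline G$ with $c=c(k)>0$.

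The hard part is exactly this structural lemma: pinning down the right pattern $P$ — it must have no homogeneous column, so that Theorem~\ref{thm:1} applies and the bound stays linear rather than the $n^{1-o(1)}$ of Theorem~\ref{thm:3} — and carrying out the interchange argument that turns a copy of $P$ into $k+1$ forced crossings. This is where bounded intersection number is used essentially, and it is the core of the Fox--Pach--T\'oth theorem~\cite{FPT}. A secondary obstacle is the remaining case $d=o(n)$, where no curve is crossed by linearly many others: here one should instead produce two linear-size subfamilies with \emph{no} crossings between them — a complete bipartite subgraph of $\overline G$ — either by an analogous grounded-pattern argument on the disjointness matrix, or by combining a separator theorem for string graphs with the fact that a crossing-sparse family has only few high-degree curves. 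In both branches the homogeneous submatrix, and hence the required complete bipartite graph, is delivered by the matrix theorems of this paper.
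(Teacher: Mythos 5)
The statement you are asked to prove is not proved in this paper at all: Theorem~\ref{thm:kintersect} is a quoted result of Fox, Pach and T\'oth~\cite{FPT}, used here as a black box. So there is no ``paper proof'' to match your argument against. What the paper \emph{does} prove in this neighbourhood is the much weaker Theorem~\ref{thm:intsect}, and it is instructive that your proposed route is essentially the route the paper takes for \emph{that} result.

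Evaluating your proposal on its own terms, it is not a proof; it is an outline with the central lemma missing, and the gap is not a routine one. You reduce everything to finding two linear-size subfamilies whose crossing matrix, after a suitable ordering, avoids a fixed $2\times\ell$ pattern $P$ with no homogeneous column, so that Theorem~\ref{thm:1} applies. That is precisely the structure of the proof of Theorem~\ref{thm:intsect} and Claim~\ref{claim:kintersect}. But the paper can carry out the ``interchange/alternation'' step only under very restrictive hypotheses: the curves are $0$-$1$ curves, both families are ordered by their intercepts on a common vertical line, \emph{one} of the two families is $1$-intersecting, and one further has to pre-separate the two families so that every $\alpha\in\mathcal{A}'$ lies below every $\beta\in\mathcal{B}'$ at $x=0$. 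Under those hypotheses, Claim~\ref{claim:kintersect} shows that a $P_{k+2}$-pattern forces two curves of the $1$-intersecting family to cross twice. Drop any of these hypotheses---general curves instead of $0$-$1$ curves, or both families merely $k$-intersecting---and this alternation argument no longer produces a contradiction; indeed removing the $1$-intersecting condition is exactly the content of the open Conjecture~\ref{conj:curves2}. So the ``structural lemma'' you defer is not a detail of the known Fox--Pach--T\'oth proof: it is a strengthening that is open in this paper, and Fox--Pach--T\'oth's actual argument does not proceed via a forbidden-pattern matrix reduction at all.

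There is a second, smaller gap: your case split on the maximum crossing degree $d$ does not close. If $d=\Omega(n)$ your grounded construction could apply (modulo the lemma above), but if $d=o(n)$ you do not automatically get a linear complete bipartite subgraph of $\overline{G}$. Greedily picking pairwise non-crossing curves gives an independent set of size about $n/(d+1)$, which is linear only when $d=O(1)$; in the intermediate regime $1\ll d\ll n$ you have neither a linear grounding curve nor a linear independent set, and invoking a string-graph separator theorem at this point is precisely the kind of geometric input the Fox--Pach--T\'oth proof supplies and your reduction was meant to avoid. In short, the proposal reproduces the paper's method for the special case it can handle (Theorem~\ref{thm:intsect}) but does not extend to the general statement, and it should not be presented as a proof of Theorem~\ref{thm:kintersect}.
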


Here, we are interested in a bipartite version of this problem. That is, given two families of $n$ curves, $\mathcal{A}$ and $\mathcal{B}$, we would like to find large subfamilies, $\mathcal{A}_{0}\subs \mathcal{A}$ and $\mathcal{B}_{0}\subs \mathcal{B}$, such that $|\mathcal{A}_{0}|=|\mathcal{B}_{0}|$, and either every curve in $\mathcal{A}_{0}$ intersects every curve in $\mathcal{B}_{0}$, or every curve in $\mathcal{A}_{0}$ is disjoint from every curve in $\mathcal{B}_{0}$.

In general, we cannot hope for any bound on $|\mathcal{A}_{0}|=|\mathcal{B}_{0}|$ beating the Ramsey bound $\Theta(\log n)$. Indeed, the complement of every comparability graph is a string graph \cite{L83,PT06}, therefore the complement of any bipartite graph is a string graph.  Nevertheless, the question remains meaningful if we restrict ourselves to $k$-intersecting collections of curves.

In fact, we believe that the condition that $\mathcal{A}\cup\mathcal{B}$ is $k$-intersecting can be weakened to only requiring that $\mathcal{A}$ and $\mathcal{B}$ themselves are $k$-intersecting.

\begin{conjecture}\label{conj:curves2}
For every $k$ there is a constant $c_k>0$ such that the following holds. Let $\mathcal{A}$ and $\mathcal{B}$ be two families of $n$ curves each such that $\mathcal{A}$ and $\mathcal{B}$ are $k$-intersecting. Then there are subfamilies $\mathcal{A}_0\subs \mathcal{A}$ and $\mathcal{B}_0\subs \mathcal{B}$ such that $|\mathcal{A}_0|=|\mathcal{B}_0|\ge c_k n$, and either every $\alpha\in \mathcal{A}_0$ intersects every $\beta\in \mathcal{B}_0$, or every $\alpha\in \mathcal{A}_0$ is disjoint from every $\beta\in \mathcal{B}_0$.
\end{conjecture}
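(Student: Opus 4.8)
The plan is to recast \Cref{conj:curves2} as a statement about homogeneous submatrices and attack it with a density dichotomy modeled on the proof of \Cref{thm:kintersect}. Let $M$ be the $n\times n$ \zm matrix with rows indexed by $\mathcal{A}$, columns by $\mathcal{B}$, where $M(\alpha,\beta)=1$ exactly when $\alpha\cap\beta\neq\emptyset$; a $c_{k}n\times c_{k}n$ homogeneous submatrix of $M$ is precisely the pair $\mathcal{A}_{0},\mathcal{B}_{0}$ we want. It should be stressed that, in contrast to the applications of \Cref{sec:applications}, none of the positive results of this paper apply here: since one curve of $\mathcal{A}$ and one curve of $\mathcal{B}$ may cross arbitrarily often, \emph{every} fixed \zm matrix occurs as a submatrix of such an $M$ (realize it with finitely many curves and pad both families with far-away pairwise-disjoint curves), so $M$ is not unordered $P$-free for any simple $P$ and \Cref{thm:graph} is silent. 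In the special case where the combined family $\mathcal{A}\cup\mathcal{B}$ is $k$-intersecting there is far more structure available; the essential difficulty of \Cref{conj:curves2} is exactly the behaviour of the $\mathcal{A}$--$\mathcal{B}$ crossings, which the hypotheses do not constrain at all.

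Fix a small constant $\eps=\eps(k)$ and split according to the number of intersecting pairs $(\alpha,\beta)$. If at most $\eps n^{2}$ of them intersect, the target is a $c_{k}n\times c_{k}n$ all-$0$ submatrix of $M$, i.e.\ a completely cross-disjoint pair of subfamilies; the route I would take is to find a separator of the combined curve arrangement that is simultaneously \emph{balanced for $\mathcal{A}$ and for $\mathcal{B}$}, since removing such a separator leaves two parts with no curve of the first part crossing any curve of the second, and each part still keeps a constant fraction of each family. If more than $\eps n^{2}$ pairs intersect, the target is a $c_{k}n\times c_{k}n$ all-$1$ submatrix, and here one would follow the part of the Fox--Pach--T\'oth argument that produces a large complete bipartite subgraph of the intersection graph, while keeping track throughout of which vertices lie in $\mathcal{A}$ and which in $\mathcal{B}$, so that the two sides of the complete bipartite graph can be forced into $\mathcal{A}$ and $\mathcal{B}$ separately rather than ending up lopsided.

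The main obstacle is the separator step in the sparse regime. The classical string-graph separator theorem gives, for an arrangement with $m$ crossings, a balanced separator of size $O(\sqrt m)$, but applied to $\mathcal{A}\cup\mathcal{B}$ this is worthless: $m$ is dominated by the unbounded $\mathcal{A}$--$\mathcal{B}$ crossings. What one would need instead is a \emph{weighted} separator theorem in which $\mathcal{A}$--$\mathcal{B}$ crossings are free --- the union of a $k$-intersecting family $\mathcal{A}$ and a $k$-intersecting family $\mathcal{B}$ should admit a partition whose separating set severs only $O_{k}(\sqrt n)$ curves of $\mathcal{A}$ and only $O_{k}(\sqrt n)$ of $\mathcal{B}$, regardless of how the two families cross. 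The natural candidate is to cut along a single curve, taken from whichever of the two families one is currently using as the ``few self-crossings'' side and chosen by a centerpoint-type argument: two curves of that family meet at most $k$ times, so using one as the cut severs only $O(k)$ times as many of its own curves as a crude count would predict, and the curves of the other family, however many times they wrap around the cut, can be re-routed inside the two resulting regions. Turning this heuristic into a genuine balanced-separator statement, valid at every level of the recursion and keeping both families balanced, is where the conjecture is presently stuck; even a partial version along these lines would already give \Cref{conj:curves2} with $n^{1-o(1)}$ in place of $c_{k}n$.
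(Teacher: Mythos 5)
The statement you are trying to prove is \Cref{conj:curves2}, which the paper poses as an open conjecture: there is no proof of it in the paper to compare against. The only thing the paper establishes in its direction is \Cref{thm:intsect}, a much weaker special case (0--1 curves, $\mathcal{A}$ $k$-intersecting, $\mathcal{B}$ only $1$-intersecting), and it does so by a completely different, ordered-matrix route: order the curves by their intersection with the line $x=0$, show via a lens argument that the bipartite intersection matrix avoids the checkerboard pattern $P_{k+2}$, and then invoke \Cref{thm:1}. Your proposal, by its own admission, does not close the conjecture either, so as it stands it is a program rather than a proof.

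Beyond the admitted missing separator theorem, both halves of your dichotomy have genuine gaps. In the sparse regime, sparseness of $M$ alone is nowhere near enough: a bipartite graph with $\eps n^2$ edges can have no $\Omega(n)\times\Omega(n)$ empty bipartite subgraph (indeed not even polynomial-sized ones, as in random bipartite graphs), so any argument must exploit the geometry, and the tool you point to does not exist: the string-graph separator theorems count all crossings, and the conjecture places no bound at all on $\mathcal{A}$--$\mathcal{B}$ crossings; moreover the ``re-route the other family inside the two regions'' step is not a legitimate operation, since the curves are given and cannot be redrawn without changing the intersection pattern you are trying to analyze. In the dense regime, ``follow Fox--Pach--T\'oth while keeping track of sides'' also breaks down for the same reason: the proof of \Cref{thm:kintersect} uses that \emph{every} pair of curves in the family crosses at most $k$ times, and here the cross pairs are unconstrained, so density of intersecting $\mathcal{A}$--$\mathcal{B}$ pairs by itself cannot force a linear complete bipartite graph (again, random-like bipartite patterns are the obstruction, and your own observation that every fixed pattern can be realized cuts against this step as much as against applying the paper's matrix theorems). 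So the two regimes of your dichotomy are each missing the essential idea, not just technical details; the paper itself only knows how to handle the very restricted setting of \Cref{thm:intsect}, where the extra hypotheses make an explicit forbidden ordered pattern available.
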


In some sense, this is the weakest condition one can impose on $\mathcal{A}$ and $\mathcal{B}$ to force any meaningful properties. Indeed, the complement of any bipartite graph can be realized as the intersection graph of a collection of curves $\mathcal{A}\cup\mathcal{B}$, where $\mathcal{A}$ is $1$-intersecting, and any two curves $A\in\mathcal{A}$ and $B\in\mathcal{B}$ intersect in at most 2 points (but $\mathcal{B}$ is not $k$-intersecting for any bounded $k$), see \cite{PaT}.

A natural special case of the conjecture  is when the curves are 0-1 curves. Here, a \emph{0-1 curve} is the drawing of a continuous function $f:[0,1]\rightarrow \mathbb{R}$ in $\mathbb{R}^{2}$. As a first step towards \Cref{conj:curves2}, we prove the following statement.

\begin{theorem} \label{thm:intsect}
Let $\mathcal{A}$ and $\mathcal{B}$ be two families of $n$ 0-1 curves each. If $\mathcal{A}$ is $k$-intersecting, and $\mathcal{B}$ is 1-intersecting, then there are subfamilies $\mathcal{A}_{0}\subs \mathcal{A}$ and $\mathcal{B}_{0}\subs \mathcal{B}$ such that $|\mathcal{A}_{0}|=|\mathcal{B}_{0}|\geq \Omega(n/k)$, and either every $\alpha\in \mathcal{A}_0$ intersects every $\beta\in \mathcal{B}_0$, or every $\alpha\in \mathcal{A}_0$ is disjoint from every $\beta\in \mathcal{B}_0$.
\end{theorem}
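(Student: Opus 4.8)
I would encode the intersection pattern as a matrix. Let $M$ be the $n\times n$ \zm matrix with rows indexed by $\mathcal{A}$ and columns indexed by $\mathcal{B}$, where $M(\alpha,\beta)=1$ iff $\alpha\cap\beta\neq\emptyset$. Since a 0-1 curve is the graph of a continuous function on $[0,1]$, two 0-1 curves are disjoint precisely when one lies strictly above the other throughout $[0,1]$; in particular, if $\alpha\in\mathcal{A}$ and $\beta\in\mathcal{B}$ are disjoint we may record whether $\alpha$ is above or below $\beta$. After a small generic perturbation, which changes no intersection because all crossings are transversal, we may assume the $2n$ curves have pairwise distinct values at $x=0$, and we order the rows of $M$ by the value of the corresponding curve at $x=0$, and similarly for the columns. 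A homogeneous all-$1$ (resp.\ all-$0$) submatrix of $M$ with $t$ rows and $t$ columns is exactly a pair $\mathcal{A}_0\subs\mathcal{A}$, $\mathcal{B}_0\subs\mathcal{B}$ of size $t$ with every $\alpha\in\mathcal{A}_0$ meeting (resp.\ disjoint from) every $\beta\in\mathcal{B}_0$, so it suffices to find such a submatrix with $t=\Omega(n/k)$.

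To this end, I would show that $M$ is $P_{2L}$-free, where $P_{2L}$ is the $2\times 2L$ checkerboard matrix of \Cref{sect:chessboard} and $L$ is a suitable constant multiple of $k$; the statement then follows immediately from \Cref{thm:1} (applied with $k'=2L$), since $P_{2L}$ has no homogeneous column. This forbidden-pattern claim is the crux. Fix two rows $a_1,a_2\in\mathcal{A}$. As $\mathcal{A}$ is $k$-intersecting, $a_1$ and $a_2$ cross transversally in at most $k$ points, so $[0,1]$ breaks into at most $k{+}1$ intervals on each of which the vertical order of $a_1$ and $a_2$ is fixed. Any column $\beta$ with $M(a_1,\beta)\ne M(a_2,\beta)$ is disjoint from exactly one of $a_1,a_2$, hence lies entirely above or entirely below it; I attach to $\beta$ this ``$\pm$'' sign together with the position of $\beta$ relative to $\{a_1(0),a_2(0)\}$ and to $\{a_1(1),a_2(1)\}$, which sorts these columns into $O(1)$ classes. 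Within a single class every column enters and leaves on the same side of the pair $(a_1,a_2)$; since $\mathcal{B}$ is $1$-intersecting, any two of these columns cross at most once, and combining this single-crossing constraint with the at-most-$k{+}1$ interval structure of $(a_1,a_2)$ forces, in the left-to-right column order of $M$, the columns of the class with $M(a_1,\cdot)=1$ and those with $M(a_2,\cdot)=1$ to occur in at most $O(k)$ alternating blocks. Summing over the $O(1)$ classes, rows $a_1,a_2$ admit no $2L$ columns $c_1<\dots<c_{2L}$ inducing the alternating pattern of $P_{2L}$ once $L$ exceeds this $\Theta(k)$ threshold.

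With the claim in hand, \Cref{thm:1} gives a homogeneous submatrix of $M$ with $\Omega(n/L)=\Omega(n/k)$ rows and columns, which as noted above yields the desired subfamilies $\mathcal{A}_0$ and $\mathcal{B}_0$. (Alternatively one can argue directly: $M$ is $P_{2L}$-free, hence so is $M^c$ after enlarging $L$ by a constant factor, and one applies \Cref{lemma:longmtx} with $\eps=1/2$ to whichever of $M,M^c$ has at least $n^2/2$ zero entries.) The main obstacle is the forbidden-pattern claim, and specifically the case analysis that converts the ``pairs of columns of $\mathcal{B}$ cross at most once'' property together with the interval structure of a pair from $\mathcal{A}$ into the $O(k)$ bound on alternations inside a class. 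Two points are worth keeping in mind: the hypothesis that $\mathcal{B}$ is $1$-intersecting is indispensable, since otherwise $M$ realizes the complement of an arbitrary bipartite graph; and the ordering of the curves really is needed, because the analogous \emph{unordered} statement is false --- a pair $a_1,a_2$ with $a_1$ far below $a_2$ may have linearly many columns of each of the two mixed types at once (many near-horizontal curves hugging $a_1$, and many hugging $a_2$), even though these types do not interleave in the $x=0$ order.
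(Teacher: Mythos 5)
Your overall strategy is the same as the paper's: encode intersections as a $\{0,1\}$ matrix with rows and columns ordered by the curves' heights at $x=0$, show that a forbidden $2\times\ell$ checkerboard pattern cannot appear (for $\ell=O(k)$), and finish by \Cref{thm:1}. Your framing of the matrix, the observation that disjointness for 0-1 curves means one lies strictly above the other, and the final application of \Cref{thm:1} are all correct and match the paper.

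The gap is exactly where you flag it: the forbidden-pattern claim is asserted but not proved, and as stated I don't think the route you sketch closes it. Two issues. First, you omit a reduction the paper makes at the outset: after ordering all $2n$ curves by height at $x=0$, one can pass (losing only a factor 2) to subfamilies $\mathcal{A}'\subs\mathcal{A}$, $\mathcal{B}'\subs\mathcal{B}$ of size $\lceil n/2\rceil$ with every curve of $\mathcal{A}'$ below every curve of $\mathcal{B}'$ at $x=0$. This is not cosmetic: it forces every $\beta$ to start above both $a_1,a_2$, so when $\beta$ meets the lens structure cut out by $a_1$ and $a_2$, it can only do so through the \emph{top} boundary of a lens. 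The paper then shows directly that $k+2$ alternating columns $\beta_1\prec\dots\prec\beta_{k+2}$ would force strictly increasing lens indices $\ell(1)<\dots<\ell(k+2)$ among the at most $k+1$ lenses, a clean contradiction; the single-crossing property of $\mathcal{B}$ is used precisely once, to show $\beta_{i+1}$ cannot leave the region $R$ bounded by $\beta_i$ and a lens boundary. Second, your ``$O(1)$ classes'' substitute is not tight enough as written: the $\pm$ sign is taken relative to whichever of $a_1,a_2$ the column avoids, so it means different things for the two column types, and a single class can mix both types; the step ``combining the single-crossing constraint with the $k{+}1$ intervals forces $O(k)$ alternating blocks'' is exactly the nontrivial topological lemma, and without the above/below separation a curve $\beta$ can enter the lens region from below, from above, or from inside, which breaks the monotone-lens-index argument and leaves many cases to control. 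So the missing ingredient is the reduction to a separated pair $(\mathcal{A}',\mathcal{B}')$ together with the lens argument (\Cref{claim:kintersect}), which also gives the sharper bound ``$P_{k+2}$-free'' rather than ``$P_{2L}$-free for some $L=\Theta(k)$.''

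Your closing remarks are accurate and worth keeping: the $1$-intersecting hypothesis on $\mathcal{B}$ is indispensable because the complement of an arbitrary bipartite graph can be realized with $\mathcal{A}$ a family of segments and each $\alpha\in\mathcal{A}$ meeting each $\beta\in\mathcal{B}$ at most twice (\Cref{sec:intersectappendix}), and the ordered formulation is essential for the reduction to \Cref{thm:1}.
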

\begin{proof}
By slightly perturbing our curves, we can assume that no 3 curves in $\mathcal{A}\cup\mathcal{B}$ go through the same point, and no two of them intersect the lines $x=0$ and $x=1$ in the same point. For two curves $\gamma,\gamma'\in\mathcal{A}\cup\mathcal{B}$, let $\gamma\prec\gamma'$ if $\gamma$ intersects the vertical line $x=0$ below $\gamma'$.

First, we claim that there are subfamilies $\mathcal{A}'\subs\mathcal{A}$ and $\mathcal{B}'\subs\mathcal{B}$ such that $|\mathcal{A}'|=|\mathcal{B}'|=\lceil n/2\rceil$, and either $\alpha\prec\beta$ for every $(\alpha,\beta)\in\mathcal{A}'\times\mathcal{B}'$, or $\beta\prec\alpha$ for every $(\alpha,\beta)\in\mathcal{A}'\times\mathcal{B}'$. Indeed, in the total ordering defined by $\prec$, pick the smallest element  $\gamma\in\mathcal{A}\cup\mathcal{B}$ such that either $\lceil n/2\rceil$ elements of $\mathcal{A}$ are $\preceq \gamma$, or $\lceil n/2\rceil$ elements of $\mathcal{B}$ are $\preceq \gamma$. In the first case, set $\mathcal{A}'=\{\alpha\in\mathcal{A}:\alpha\preceq \gamma\}$ and let $\mathcal{B}'$ be an $\lceil n/2\rceil$ element subset of $\{\beta\in\mathcal{B}:\gamma\prec\beta\}$. In the second case, let  $\mathcal{A}'$ be an $\lceil n/2\rceil$ element subset of $\{\alpha\in\mathcal{A}:\gamma\prec\alpha\}$ and $\mathcal{B}'=\{\beta\in\mathcal{B}:\beta\preceq\gamma\}$.

Without loss of generality, suppose that $\alpha\prec\beta$ for every $(\alpha,\beta)\in\mathcal{A}'\times\mathcal{B}'$. Define the $\lceil n/2\rceil\times \lceil n/2\rceil$ matrix $A$ by setting $A(i,j)=1$ if the $i$'th smallest element of $\mathcal{A}'$ intersects the $j$'th smallest element of $\mathcal{B}'$ with respect to the ordering $\prec$, and $A(i,j)=0$ otherwise.

\begin{claim}\label{claim:kintersect}
Let $P_{\ell}$ be the $2\times \ell$ matrix defined by $P_{\ell}(i,j)=1$, if $i+j$ is even, and $P_{\ell}(i,j)=0$ if $i+j$ is odd. Then $A$ is $P_{k+2}$-free.
\end{claim}
\begin{proof}
Let us start with introducing some notation. Each 0-1 curve cuts the strip $[0,1]\times\mathbb{R}$ into two parts, an upper and lower part. We say that a point set is \emph{above} the curve if it is a subset of the upper part, and it is \emph{below}, if it is a subset of the lower part. Also, if $\gamma$ is a 0-1 curve and $q\in\gamma$, let $\gamma(q)$ denote the subcurve of $\gamma$ starting on the vertical line $x=0$, and ending at $q$. For $q,q'\in\gamma$, we define $\gamma(q,q')=\gamma(q')\setminus\gamma(q)$.

Suppose that $\alpha\prec\alpha'$ in $\mathcal{A}$, and $\beta_{1}\prec\dots\prec\beta_{k+2}$ in $\mathcal{B}$ induce $P_{k+2}$. Let $p_{1},\dots,p_{t}$ be the intersection points of the curves $\alpha$ and $\alpha'$, ordered by their $x$-coordinates. As $\mathcal{A}$ is $k$-intersecting, we have $t\le k$. These $t$ intersection points cut both $\alpha$ and $\alpha'$ into $k+1$ subcurves, let us denote them by $\alpha_{0},\dots,\alpha_{t}$ and $\alpha'_{0},\dots,\alpha'_{t}$ from left to right. Note that $\alpha<\alpha'$ implies that if $i$ is even, then $\alpha_{i}$ is below $\alpha'$, and $\alpha_{i}'$ is above $\alpha$, while if $i$ is odd, then $\alpha_{i}$ is above $\alpha'$ and $\alpha_{i}'$ is below $\alpha$. For $i=0,\dots,t$, let $L_{i}$ denote the region in $[0,1]\times\mathbb{R}$ bounded by $\alpha_{i}$ and $\alpha_{i}'$,  and call these regions $L_{i}$ \emph{lenses}. If $i$ is even, say that $\alpha_{i}'$ is the \emph{top boundary} of $L_{i}$ and $\alpha_{i}$ is the \emph{bottom boundary}, and if $i$ is odd, then $\alpha_{i}$ is the top boundary of $L_{i}$, and $\alpha_{i}'$ is the bottom boundary. Note that if $\beta_{j}$ intersects the lens $L_{i}$, then $\beta_{j}$ intersects only the top boundary of $L_{i}$, as $\alpha,\alpha'\prec\beta_{i}$ and each of the curves $\beta_{i}$ intersect exactly one of $\alpha$ and $\alpha'$.  Therefore, if $L_{i}$ and $\beta_{j}$ intersect, $i$ and $j$ must have the same parity.

\begin{figure}[t!]
\includegraphics[trim=0 5cm 0 1.5cm, clip, scale=.7]{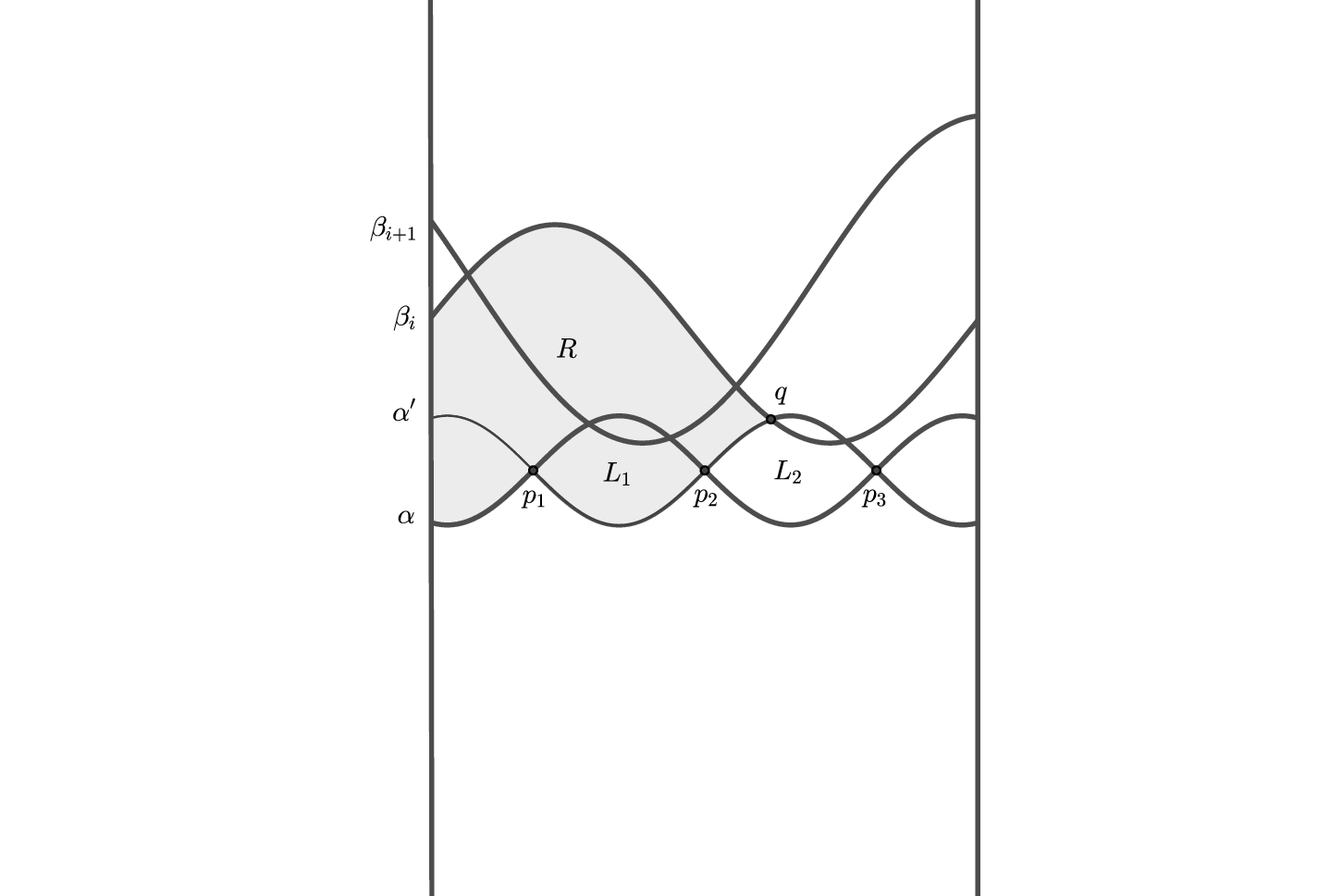}
\caption{An illustration of the proof of \Cref{claim:kintersect} }
\label{image1}
\end{figure}

For $i\in [k+2]$, let $\ell(i)$ denote the smallest index for which $\beta_{i}$ intersects the lens $L_{\ell(i)}$. We show that $\ell(1)<\ell(2)<\dots<\ell(k+2)$, which contradicts $0\leq \ell(i)\leq k$.  Suppose that $\ell(i+1)\leq \ell(i)$ for some $i\in [k+1]$. As $i$ and $i+1$ have different parities, we have $\ell(i+1)<\ell(i)$ and $\beta_{i+1}$ cannot intersect $L_{\ell(i)}$. Let $\gamma$ denote the union of the top boundaries of all the lenses, and let $\gamma'$ denote the union of the bottom boundaries of the lenses, then $\gamma$ and $\gamma'$  are 0-1 curves. Let $q$ be the first intersection point of $\gamma$ and $\beta_{i}$, and let
\[ \delta=\gamma'(p_{\ell(i)})\cup \gamma(p_{\ell(i)},q). \]
In other words, we obtain the curve $\delta$ by following the bottom boundaries of the lenses until we reach the lens $L_{\ell(i)}$, where we follow the top boundary until we reach $\beta_{i}$. Let $R$ be the region bounded by $\beta_{i}(q)$ and $\delta$, see \Cref{image1}. The curve $\beta_{i+1}$ starts outside $R$, but $R$ contains the lens $L_{\ell(i+1)}$, so $\beta_{i+1}$ must enter $R$. However, $\beta_{i+1}$ does not intersect intersect $\gamma'$, nor does it touch $L_{\ell(i)}$. Thus, $\beta_{i+1}$ cannot intersect $\delta$. Hence, $\beta_{i+1}$ must enter $R$ through $\beta_{i}(q)$. Since $\beta_{i+1}$ also leaves $R$, it must also exit through $\beta_{i}(q)$. Therefore, $\beta_{i}$ and $\beta_{i+1}$ intersect twice, contradiction.
\end{proof}

The $2\times k$ matrix $P_{k+2}$ does not contain a homogeneous column, so we can apply \Cref{thm:1} to conclude that $A$ contains a homogeneous submatrix of size at least $\Omega(n/k)$. This corresponds to two collections $\mathcal{A}_{0}\subs \mathcal{A}$ and $\mathcal{B}_{0}\subs \mathcal{B}$ with the desired properties.

 \end{proof}

\subsection{Pseudohalfplanes}

A \emph{bi-infinite $x$-monotone curve} is the graph of a continuous function $f:\mathbb{R}\rightarrow \mathbb{R}$. A collection $\mathcal{L}$ of bi-infinite $x$-monotone curves is a \emph{pseudoline-arrangement} if any two elements of $\mathcal{L}$ intersect in exactly one point. If $\mathcal{L}$ is a pseudoline-arrangement, then $\mathcal{H}$ is a \emph{pseudohalfplane-arrangement} if every element $H\in\mathcal{H}$ is either the set of points below an element of $\mathcal{L}$, or the set of points above an element of $\mathcal{L}$. 

Let $P$ be a set of points in the plane  and let $\mathcal{H}$ be a pseudohalfplane-arrangement. Consider the matrix $M$ whose rows are labeled with elements of $P$, columns are labeled with the elements of $\mathcal{H}$, and 
\[ 
M(p,H)=\begin{cases}1 &\mbox{ if } p\in H\\
0 &\mbox{ if }p\not\in H
\end{cases}. 
\]
It is proved in \cite[Theorem 2.19, Proposition A.1]{KP19} (see also \cite{BHP}) that $M$ can be partitioned into two submatrices $M_{1}$ and $M_{2}$ such that the following holds: the rows and columns of $M_{1}$ and $M_{2}$ can be ordered such that  $M_{1}$ and $M_{2}$ does not contain  $\begin{pmatrix}1 & 0  \\ 0 & 1 \end{pmatrix}$ as a submatrix. But then \Cref{thm:2by2} immediately implies that some linear set of pseudohalfplanes contains or avoids a positive proportion of the points.

\begin{corollary}\label{thm:pseudo}
	Let $P$ be a set of $n$ points in the plane and let $\mathcal{H}$ be a pseudohalfplane-arrangement with $n$ elements. Then there are subsets $P_{0}\subset P$ and $\mathcal{H}_{0}\subset \mathcal{H}$ of size $|P_{0}|=|\mathcal{H}_{0}|\geq cn$ for a suitable constant $c>0$, such that either for every $p\in P_{0}$ and $H\in\mathcal{H}_{0}$ we have $p\in H$, or for every $p\in P_{0}$ and $H\in\mathcal{H}_{0}$ we have $p\not\in H$.
\end{corollary}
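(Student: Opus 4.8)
This is essentially a direct consequence of \Cref{thm:2by2} together with the structural result of \cite{KP19}, so the plan is mostly bookkeeping. By \cite{KP19}, the point--pseudohalfplane incidence matrix $M=M[P\times\mathcal{H}]$ can be split into $M_1=M[P\times\mathcal{H}_1]$ and $M_2=M[P\times\mathcal{H}_2]$ with $\mathcal{H}=\mathcal{H}_1\cup\mathcal{H}_2$, such that after independently permuting the rows and the columns of each $M_j$ one obtains a matrix containing no $\begin{pmatrix}1&0\\0&1\end{pmatrix}$ submatrix. Since $|\mathcal{H}_1|+|\mathcal{H}_2|=n$, at least one of the two parts, say $\mathcal{H}_1$, satisfies $|\mathcal{H}_1|\ge\lceil n/2\rceil$.

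Next I would pass to a square submatrix. Choosing any $\lceil n/2\rceil$ points $P'\subs P$ and any $\lceil n/2\rceil$ pseudohalfplanes $\mathcal{H}'\subs\mathcal{H}_1$, the matrix $N=M[P'\times\mathcal{H}']$ is an $\lceil n/2\rceil\times\lceil n/2\rceil$ \zm matrix. Being a submatrix of $M_1$, it is still $\begin{pmatrix}1&0\\0&1\end{pmatrix}$-free after a suitable reordering of its rows and columns, since this property is inherited by submatrices and is unaffected by permutations of rows or columns. As $\begin{pmatrix}1&0\\0&1\end{pmatrix}$ is an inhomogeneous $2\times 2$ \zm matrix, \Cref{thm:2by2} applies to this reordering of $N$ and produces a homogeneous submatrix of size $c'\lceil n/2\rceil\times c'\lceil n/2\rceil$ for some absolute constant $c'>0$; set $c=c'/2$.

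Finally, I would translate the conclusion back to the geometric setting. Permuting rows and columns only relabels points and pseudohalfplanes, so the homogeneous submatrix corresponds to sets $P_0\subs P$ and $\mathcal{H}_0\subs\mathcal{H}$ with $|P_0|=|\mathcal{H}_0|\ge cn$. If it is the all-$1$ matrix, then $p\in H$ for every $p\in P_0$ and $H\in\mathcal{H}_0$; if it is the all-$0$ matrix, then $p\notin H$ for every such pair. This is exactly the assertion of the corollary.

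There is no genuine obstacle here: the statement is a black-box corollary. The only things requiring attention are verifying that the decomposition of \cite{KP19} indeed yields two $\begin{pmatrix}1&0\\0&1\end{pmatrix}$-free parts under the appropriate independent reordering of rows and columns within each part, and carefully keeping track of the constant, which only loses a factor of $2$ through the pigeonhole step and the passage to a square submatrix.
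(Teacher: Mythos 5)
Your proof is correct and follows essentially the same route as the paper: invoke the decomposition of the incidence matrix from \cite{KP19} into two parts each reorderable to be $\begin{pmatrix}1&0\\0&1\end{pmatrix}$-free, then apply \Cref{thm:2by2}. The paper compresses the pigeonhole/square-submatrix bookkeeping into a single sentence, but the content is identical.
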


\section{Concluding remarks}\label{sec:remarks}

\begin{figure}[t!]
\begin{center}
\begin{tabular}{cccc}

 $P$ & \Cref{conj:acyclic} & Best bound &  Reference \\

 \noalign{\smallskip}
 \hline
 \noalign{\smallskip}
 
 $\begin{pmatrix}0 & 0  \\ 0 & 0 \end{pmatrix}$ & \Checkmark & $\ge cn\times cn$ & \Cref{lem:homcolumn} \\
 \noalign{\smallskip}
 
 $\begin{pmatrix}1 & 0  \\ 0 & 0 \end{pmatrix}$ & \Checkmark & $\ge cn\times cn$ & \Cref{lemma:1cornereasy} \\
 \noalign{\smallskip}

 $\begin{pmatrix}1 & 1  \\ 0 & 0 \end{pmatrix}$ & \Checkmark & $\ge cn\times cn$ & \Cref{lemma:longmtx} \\
 \noalign{\smallskip}

 $\begin{pmatrix}1 & 0  \\ 0 & 1 \end{pmatrix}$ & \Checkmark & $\ge cn\times cn$ & \Cref{lemma:longmtx} \\
 \noalign{\smallskip}

 $\begin{pmatrix}1 & 1  \\ 1 & 0 \end{pmatrix}$ & \Checkmark & $\ge cn\times cn$ & \Cref{lemma:1corner} \\
 \noalign{\smallskip}

 $\begin{pmatrix}1 & 1  \\ 1 & 1 \end{pmatrix}$ & $\times$   & $\le n^{1-\eps}\times n^{1-\eps}$   & \Cref{prop:cyclicmx} \\
 \noalign{\smallskip}

 $\begin{pmatrix}1 & 1 & 0 \\ 0 & 0 & 0 \end{pmatrix}$ & \Checkmark & $\ge cn\times cn$ & \Cref{lemma:1cornereasy} \\
 \noalign{\smallskip}

 $\begin{pmatrix}1 & 0 & 1 \\ 0 & 1 & 0 \end{pmatrix}$ & \Checkmark & $\ge cn\times cn$ & \Cref{lemma:longmtx} \\
 \noalign{\smallskip}

 $\begin{pmatrix}1 & 1 & 1 \\ 1 & 0 & 0 \end{pmatrix}$ & ? & $\ge \frac{cn}{\log n}\times cn$ & \Cref{lemma:ordered} \\
 \noalign{\smallskip}

 $\begin{pmatrix}1 & 1 & 1 \\ 0 & 1 & 0 \end{pmatrix}$ & ? & $\ge n^{1-o(1)}\times cn$ & \Cref{lemma:ordered} \\
 \noalign{\smallskip}

 $\begin{pmatrix}1 & 0 & 0 \\ 0 & 1 & 0 \\ 0 & 0 & 1 \end{pmatrix}$ & ? & 
 
 \begin{tabular}{c}
 $\ge cn\times cn$ \\ {\footnotesize if $\ge (1-\eps) n^2$ 0-entries}
 \end{tabular}
 
 & \Cref{lemma:epsilon} \\

\end{tabular}

\end{center}

\caption{Largest size of an all-0 submatrix in an $n\times n$ $P$-free matrix with at least $\eps n^2$ 0-entries.}
\label{fig:table}
\end{figure}

Our work establishes various bounds on the size of the largest homogeneous submatrix that can be found in a matrix, when a fixed submatrix $P$ is forbidden. A summary of our results for fixed small $P$ can be found in \Cref{fig:table}. A number of questions remain unsolved, and it would be very interesting to obtain good bounds for simple or acyclic matrices. Perhaps the first open question is to decide if $Q_2 = \begin{pmatrix}1 & 1 & 0  \\ 0 & 0 & 0\end{pmatrix}$ satisfies \Cref{conj:acyclicpair}, i.e., if forbidding the submatrix $Q_2$ in an $n\times n$ \zm matrix guarantees the existence of a $cn\times cn$ homogeneous submatrix.

\medskip

These questions are also closely related to recent results on the Erd\H{o}s-Hajnal theory of trees: Extending previous work in \cite{BLT,LPSS},
Chudnovsky, Scott, Seymour and Spirkl \cite{CSSS} proved the following variant of the Erd\H{o}s-Hajnal conjecture.

\begin{theorem}[Chudnovsky et al.]\label{thm:chudnovsky}
	Let $T$ be a tree. Then the family of all graphs not containing an induced copy of $T$ and $T^{c}$ has the strong Erd\H{o}s-Hajnal property.
\end{theorem}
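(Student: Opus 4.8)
The plan is to reformulate \Cref{thm:chudnovsky} as a statement about \emph{pure pairs}: call a pair $(A,B)$ of disjoint vertex sets of a graph \emph{complete} (resp.\ \emph{anticomplete}) if every vertex of $A$ is adjacent (resp.\ non-adjacent) to every vertex of $B$. By the definition of the strong Erd\H{o}s--Hajnal property, it suffices to exhibit a constant $c=c(T)>0$ such that every graph $G$ on $n$ vertices with no induced copy of $T$ or of $T^{c}$ contains a complete or anticomplete pair $(A,B)$ with $|A|,|B|\ge cn$. Since this class of graphs is closed under complementation, we may always pass from $G$ to $\overline{G}$; in particular it is enough to find a linear \emph{anticomplete} pair in $G$ or in $\overline{G}$.

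First I would make the host graph sparse. As $G$ is in particular $T$-free, the graph form of R\"odl's theorem (cf.\ \Cref{lemma:dense_sparse}) produces a constant $\delta=\delta(T,\eps)>0$ and an induced subgraph $H=G[S]$, $|S|\ge\delta n$, such that $H$ or $\overline{H}$ has at most $\eps|S|^{2}$ edges; replacing $G$ by its complement if necessary, assume $H$ itself is this sparse graph. Then $H$ is still $\{T,T^{c}\}$-free, and it remains to locate a linear anticomplete pair inside the sparse, $T$-free graph $H$. This is the point at which the \emph{forest} structure of $T$ must be used, since a generic sparse graph has no linear anticomplete pair at all.

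To find the anticomplete pair I would induct on $|V(T)|$. If $T$ is a star $K_{1,k}$, a $T$-free graph has no vertex whose neighbourhood contains an independent set of size $k$; combined with sparsity this lets one delete a linear number of vertices and reach a bounded-degree induced subgraph on a linear set, which then splits greedily into two linear anticomplete halves. For the inductive step, choose a leaf $v$ of $T$ with neighbour $u$ and put $T'=T-v$. If $H$ contains no induced $T'$, apply the inductive hypothesis to the smaller forest $T'$; otherwise $H$ contains many induced copies of $T'$, and for every such copy the $T$-freeness of $H$ forces each vertex adjacent to the ``$u$-vertex'' of the copy to be adjacent to a further vertex of the copy. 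Aggregating this obstruction across all copies of $T'$ and all choices of $(v,u)$ -- weighting copies by how they meet a fixed linear set, and using the containment order on neighbourhoods -- one extracts two linear sets with no edge between them.

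The hard part is exactly this aggregation, together with the bookkeeping that makes it compatible with the induction: a linear subset of a sparse graph need not be sparse, copies of $T'$ can be embedded in $H$ in uncontrollably many ways, and one must guarantee that the fraction surviving each of the $|V(T)|$ inductive levels does not shrink below a constant. Resolving these is precisely the technical heart of the argument of Chudnovsky, Scott, Seymour and Spirkl \cite{CSSS}, and it appears to need genuinely new ideas beyond the ordered-matrix and comparability-graph tools developed in the present paper. We note, however, that when $T$ happens to be an induced subgraph of a double broom $H_{s,t}^{*}$, the induction can be avoided altogether: passing to a linear bipartite-like piece of $G$ and invoking \Cref{thm:graph} already yields the required linear pure pair.
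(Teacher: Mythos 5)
This statement is not proved in the paper at all: it is quoted verbatim from Chudnovsky, Scott, Seymour and Spirkl \cite{CSSS} as background in the concluding remarks, so there is no in-paper argument to compare against. Measured as a standalone proof, your proposal has a genuine gap, and you name it yourself: after the (standard and correct) reduction via R\"odl's theorem / \Cref{lemma:dense_sparse} and complementation, what remains to show is that every sparse graph with no induced copy of the tree $T$ contains two linearly sized anticomplete sets. That is not a technical aggregation detail to be deferred -- it is precisely the main theorem of \cite{CSSS} (``Trees and linear anticomplete pairs''), i.e.\ the entire content of \Cref{thm:chudnovsky} beyond the routine sparsification step. An outline that reduces the statement to the cited theorem and then invokes that theorem for ``the technical heart'' is a citation, not a proof.

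Moreover, the one piece of the induction you do spell out, the star base case, does not work as written. If $H$ has at most $\eps|S|^{2}$ edges and no induced $K_{1,k}$, then a vertex of degree $d$ has a neighbourhood with independence number less than $k$, hence with $\Omega(d^{2}/k)$ edges, which only yields a maximum-degree bound of order $\sqrt{k\eps}\,|S|$ -- a small linear fraction, not bounded degree. With maximum degree $\delta|S|$ the greedy split you describe fails: for a set $A$ of linear size, $|N(A)|$ can already exceed $|S|$, so one cannot simply take $B=S\setminus N[A]$. (The star case is true, but it needs the induced-star-freeness in a stronger way than a degree bound.) Similarly, in the inductive step the phrase ``aggregating this obstruction across all copies of $T'$ \dots one extracts two linear sets with no edge between them'' is exactly the missing argument; the difficulties you list (linear subsets of sparse graphs need not be sparse, uncontrolled multiplicity of embeddings, loss of constants across levels) are the reasons the proof in \cite{CSSS} is nontrivial, and nothing in your sketch resolves them. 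Your closing observation -- that for trees $T$ that are induced subgraphs of $H_{s,t}^{*}$ the conclusion follows from \Cref{thm:graph} -- is correct and consistent with the paper's discussion, but it covers only finitely structured special cases, not general trees.
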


Our problems can be thought of as an ordered bipartite version of the strong Erd\H{o}s-Hajnal problem. For example, it is not hard to see that \Cref{thm:chudnovsky} would follow from \Cref{conj:acyclicpair}.

Indeed, we can think of our $n\times n$ \zm matrix $A$ as the biadjacency matrix of a bipartite graph $G(A\cup B,E)$ with parts of size $n$. Submatrices then correspond to induced subgraphs, and a homogeneous submatrix means a subgraph $G'=(A'\cup B',E')$ that is complete or empty between $A'\subs A$ and $B'\subs B$. An important difference, though, is that a forbidden submatrix only forbids \emph{one ordering} of the corresponding bipartite graph (where the vertices in the two parts are ordered according to the rows and columns of the matrix). This is a much weaker condition and adds considerable difficulty to our problem.

Approximate versions of our \Cref{conj:acyclicpair,conj:acyclic}, finding $n^{1-o(1)} \times n^{1-o(1)}$ homogeneous submatrices, were very recently proved by Scott, Seymour and Spirkl \cite{SSS}.

\medskip

Extremal questions about \zm matrices have been extensively studied over the past decades, and it is worth mentioning a few that are loosely related to our problem.

A \zm matrix $A$ contains a \emph{pattern} $P$, where $P$ is another \zm matrix, if $P$ can be obtained from a submatrix of $A$ by changing some 1-entries to 0-entries. When $A$ is a biadjacency matrix, this corresponds to the subgraph relation (as opposed to submatrices corresponding to \emph{induced} subgraphs). The Tur\'an number $\ex(n,P)$ is defined as the maximum number of 1-entries in an $n\times n$ \zm matrix that does not contain the pattern $P$. A central problem in this area is a conjecture of Pach and Tardos \cite{PTa06} that $\ex(n,P) = O(n\polylog n)$ whenever $P$ is acyclic. Although this is known for many such matrices \cite{FH,MT04,PTa06,KTTW19}, the general conjecture remains open.

Another related question asks for $\forb(n,P)$, the maximum number of distinct columns in an unordered $P$-free \zm matrix $A$ with $n$ rows. When we think of $A$ as the incidence matrix of a hypergraph, finding $\forb(n,P)$ is connected to certain hypergraph coloring problems (see, e.g., \cite{L68}), as well as other structural results. For example, in the special case when $P$ is the $k\times 2^k$ \zm matrix with all different columns, the Sauer-Shelah lemma gives $\forb(n,P) = \binom{n}{k-1} + \binom{n}{k-2} +\dots + \binom{n}{0}$. An open conjecture of Anstee and Sali \cite{AS05} asserts that $\forb(n,P) = \Theta(n^{f(P)})$ for an implicitly defined integer function $f$. For further partial results on this topic, we refer the reader to the survey \cite{A13}.

\subsubsection*{Acknowledgments}

We thank Bal\'azs Keszegh and D\"om\"ot\"or P\'alv\"olgyi for drawing our attention to their recent paper \cite{KP19} and for pointing out that our \Cref{thm:2by2} implies \Cref{thm:pseudo}. We are also grateful to Maria Axenovich for sharing with us her manuscript \cite{ATW}.

\small

\end{document}